\newtheorem{theorem}{Theorem}[section]
\newtheorem{lemma}[theorem]{Lemma}
\newtheorem{proposition}[theorem]{Proposition}
\newtheorem{corollary}[theorem]{Corollary}
\newtheorem{conjecture}[theorem]{Conjecture}
\theoremstyle{definition}
\newtheorem{definition}[theorem]{Definition}
\newtheorem{remark}[theorem]{Remark}
\newtheorem{question}[theorem]{Question}
\newtheorem{example}[theorem]{Example}
\newtheoremstyle{mine}
{8pt}
{8pt}
{}
{}
{\bfseries}
{}
{.5em}
{\thmname{#1}\thmnumber{ #2}\thmnote{ #3}.}
\theoremstyle{mine}
\newtheorem*{theorem*}{Theorem}
\newcommand{\quitado}[1]{{\color{red} }}
\newcommand{\N}{\mathbb{N}}
\newcommand{\R}{\mathbb{R}}
\newcommand{\Z}{\mathbb{Z}}
\newcommand{\C}{\mathbb{C}}
\newcommand{\T}{\mathbb{T}}
\newcommand{\one}{\mathbf{1}}
\newcommand{\aff}{\operatorname{aff}}
\newcommand{\conv}{\operatorname{conv}}
\newcommand{\cone}{\operatorname{cone}}
\newcommand{\rank}{\operatorname{rank}}
\newcommand{\Int}{\operatorname{Int}}
\newcommand{\Star}{\operatorname{Star}}
\newcommand{\starop}{\operatorname{star}}
\newcommand{\UT}{\textup{UT}}
\newcommand{\SSB}{\textup{SSB}}
\DeclareMathOperator{\GL}{GL}
\DeclareMathOperator{\AGL}{AGL}
\renewcommand{\leq}{\leqslant}
\renewcommand{\geq}{\geqslant}
\renewcommand{\le}{\leqslant}
\renewcommand{\ge}{\geqslant}
\newenvironment{llave}
	{\[\left\{\begin{array}{rl}}
	{\end{array}\right.\]}
\newenvironment{eqllave}{\begin{equation}\left\{\begin{array}{rl}}
	{\end{array}\right.\end{equation}}
\newcommand{\mymatrix}[1]{\begin{pmatrix}#1\end{pmatrix}}
\title{Ewald's Conjecture and integer points in algebraic and symplectic toric geometry}
\author{Luis Crespo \,\,\,\,\,\, \'Alvaro Pelayo \,\,\,\,\,\, Francisco Santos}
\address{Luis Crespo, Francisco Santos,
	Departamento de Matem\'{a}ticas, Estad\'{i}stica y Computaci\'{o}n, Universidad de Cantabria, Av.~de Los Castros 48, 39005 Santander, Spain}
\email{luis.cresporuiz@unican.es, francisco.santos@unican.es}
\address{\'Alvaro Pelayo, 	Facultad de Ciencias Matem\'aticas,
	Universidad Complutense de Madrid, 28040 Madrid, Spain, and 
	Real Academia de Ciencias Exactas, F\'isicas y Naturales de Espa\~na
Calle Valverde, 22, 28004 Madrid, Spain}
\email{alvpel01@ucm.es}
 \subjclass[2000]{Primary 53D05,  53D20, 52A20; Secondary 52C07, 52B11, 52B20}
 \keywords{Ewald Conjecture, monotone symplectic manifold, toric geometry, momentum map,  smooth reflexive polytope, Delzant polytope, monotone polytope,  mirror symmetry.}
\begin{document}

	\begin{abstract}
		We solve several open problems concerning  integer points of polytopes arising in symplectic and algebraic geometry.
		In this direction we give the first proof of a broad case of Ewald’s Conjecture (1988) concerning symmetric integral points of monotone lattice polytopes in arbitrary dimension. We also include an asymptotic quantitative study of the set of points appearing in Ewald's Conjecture. Then we relate this work to the problem of displaceability of orbits in symplectic toric geometry. We conclude  with a proof for 
		the $2$-dimensional case, and for a number of cases in higher dimensions, of Nill's Conjecture (2009), which is a generalization of Ewald's conjecture
		to smooth lattice polytopes. Along the way the paper introduces two new classes of polytopes which arise naturally in the study of
		Ewald's Conjecture and symplectic displaceability: neat polytopes, which are related to Oda's Conjecture, and deeply monotone polytopes.
			\end{abstract}
		
	\maketitle

\setcounter{tocdepth}{1}

\section{Introduction}

\emph{Smooth reflexive polytopes} are also known as \emph{monotone polytopes}, as they  are the images of monotone symplectic toric manifolds under of the  momentum map of symplectic geometry. 
Their number is finite in each dimension (modulo unimodular equivalence), but increases rapidly. See Table \ref{tablemon} in Section~\ref{ws} for the numbers up to dimension $9$, computed in~\cite{LorenzPaffenholz,OebroSmoothFano}, and Figure~\ref{fig-mon2} for a picture of the five monotone polytopes
 in dimension two.

\begin{figure}[h]
	\includegraphics[width=0.15\linewidth,trim=10cm 0 10cm 0,clip]{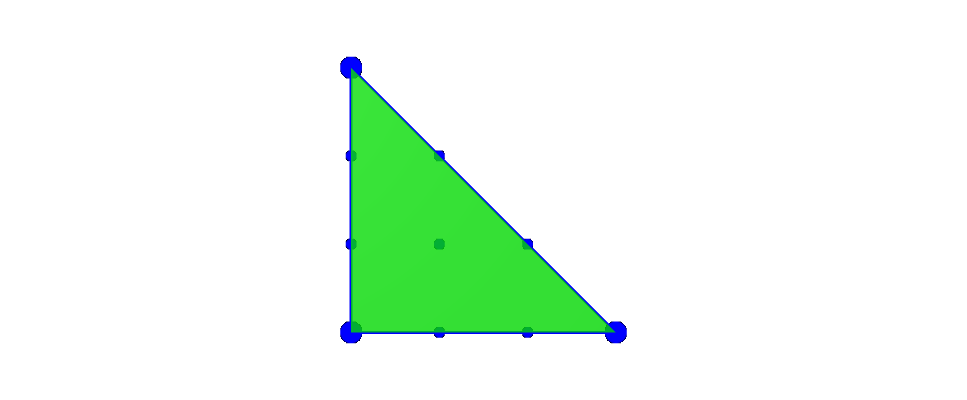}
	\includegraphics[width=0.15\linewidth,trim=10cm 0 10cm 0,clip]{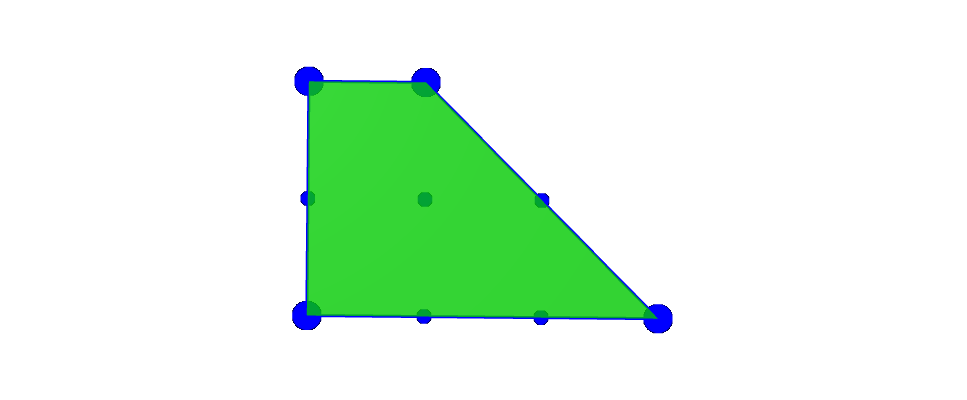}
	\includegraphics[width=0.15\linewidth,trim=10cm 0 10cm 0,clip]{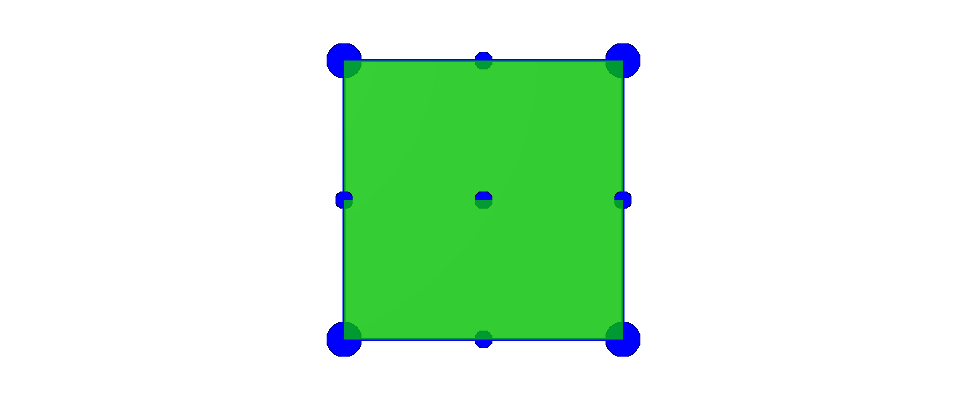}
	\includegraphics[width=0.15\linewidth,trim=10cm 0 10cm 0,clip]{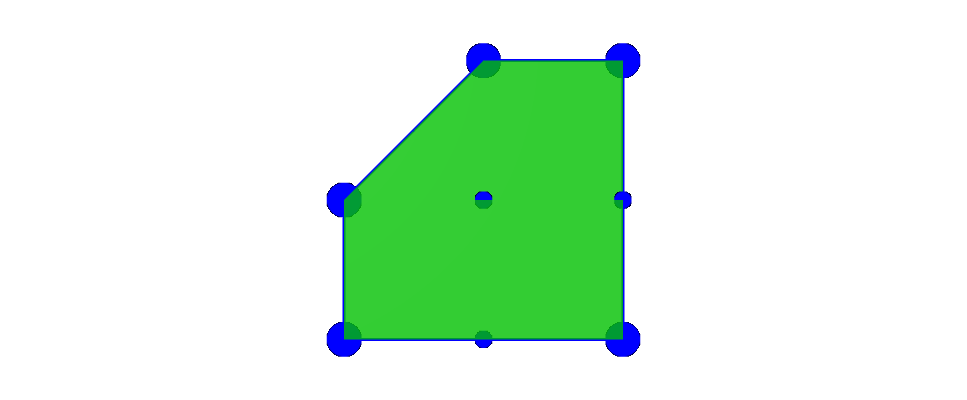}
	\includegraphics[width=0.15\linewidth,trim=10cm 0 10cm 0,clip]{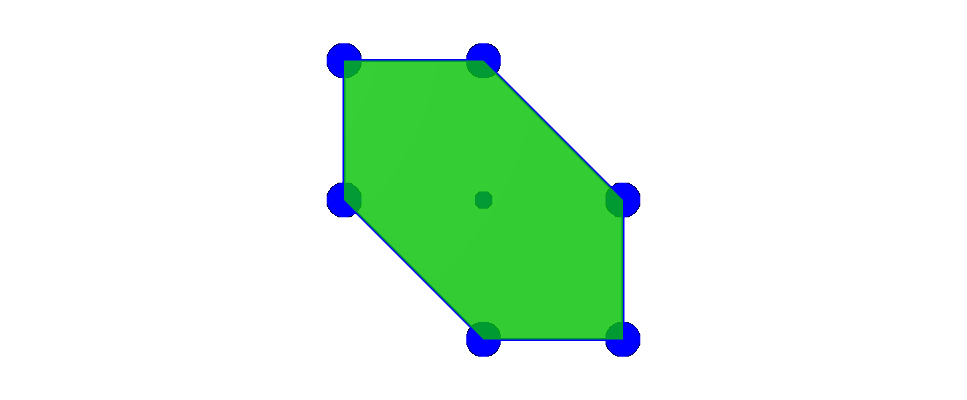}
	\caption{The five $2$-dimensional monotone polygons.
	We call them the monotone triangle, trapezoid, square, pentagon, and hexagon, respectively}
	\label{fig-mon2}
\end{figure}

Recall that a rational polytope is called \emph{smooth} if it is simple and every normal cone is unimodular, and \emph{reflexive} if it is a lattice polytope with the origin in its interior and its polar is also a lattice polytope.
Smooth polytopes in general, and monotone ones in particular, are very important in algebraic and symplectic geometry,
providing a strong link between ``discrete" problems in combinatorics/convex geometry and ``continuous" 
problems on smooth (toric) manifolds.

We refer to Charton-Sabatini-Sepe~\cite{CSS23}, Godinho-Heymann-Sabatini~\cite{GodinhoHeymannSabatini} and  McDuff~\cite{McDuff-topology},  
for recent works which discuss monotone polytopes, which are the momentum polytopes of \emph{monotone toric symplectic manifolds}, from the perspective of symplectic geometry and to
Batyrev~\cite{Batyrev}, Cox-Little-Schenck~\cite[Theorem 8.3.4]{CLS}, Franco-Seong~\cite{FrancoSeong}, Haase-Melnikov~\cite{HaaseMelnikov06} and Nill~\cite{Nill} for their relation to Gorenstein Fano varieties in algebraic geometry.

 In this paper we are interested in understanding, both theoretically and computationally, the properties of the  \emph{Ewald set} of a monotone polytope.
 This set appears implicitly in the influential 1988 paper by G\"unter Ewald~\cite{Ewald}. 

\begin{definition}[Ewald set]
The \emph{Ewald set} of a  polytope $P\subset \R^n$ is
\[
\mathcal{E}(P):=\Z^{n} \cap P\cap -P.
\]
Its points are called \emph{Ewald points} of $P$.
\end{definition}

That is, $\mathcal{E}(P) \subset \Z^n$ consists of the \emph{symmetric integral points of $P$}, 
meaning integral points $x \in \Z^n$ for which both $x\in P$ and $-x \in P$.
Our main motivation is the following  conjecture:%
\footnote{
The original formulation of Conjecture~\ref{ec} is in the dual, stating that the dual of any monotone polytope $P$ can be sent, via a unimodular transformation, to be contained in $[-1,1]^n$. 
	As pointed out by {\O}bro~\cite{Oebro-tesis} this is equivalent to our formulation, used already by McDuff~\cite[Section 3.1]{McDuff-probes} and Payne~\cite[Remark 4.6]{Payne}.
(McDuff and Payne remove the origin from $\mathcal{E}(P)$ in their definition, but for technical reasons we do not).   }

\begin{conjecture}[Ewald's Conjecture 1988 {\cite[Conjecture 2]{Ewald}}] \label{ec}
Let $n\in \N$. 
If $P$ is an $n$-dimensional monotone polytope in $\R^n$ then $\mathcal{E}(P)$ contains a unimodular basis of $\Z^n$.
\end{conjecture}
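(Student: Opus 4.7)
The plan is to exploit the smoothness of the vertex structure of a monotone polytope $P$ together with the central symmetry inherent in reflexivity. Fix any vertex $v$ of $P$; by smoothness, the primitive edge directions $u_1,\ldots,u_n$ at $v$ form a $\Z$-basis of $\Z^n$, and each adjacent vertex is $v+\ell_i u_i$ for some $\ell_i\in\Z_{>0}$. Since $0\in\mathcal{E}(P)$ trivially, the problem reduces to producing $n$ vectors in $\mathcal{E}(P)$ that span $\Z^n$. A natural first candidate is $\{u_1,\ldots,u_n\}$ itself. Using the facet description $P=\{x:\langle a_F,x\rangle\leq 1\}$, where $a_F\in\Z^n$ are the primitive inner normals, one has $u\in\mathcal{E}(P)$ if and only if $|\langle a_F,u\rangle|\leq 1$ for every facet $F$. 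At the $n$ facets meeting $v$ the quantity $\langle a_F,u_i\rangle$ lies in $\{-1,0,1\}$ automatically, so the only obstruction is what happens at facets \emph{far from} $v$.

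I would proceed by strong induction on $n$. The base case $n=1$ gives $P=[-1,1]$ and $\{1\}\subset\mathcal{E}(P)$. For the inductive step I split into two subcases. If $P$ admits a direct product decomposition $P=P_1\times P_2$ of lower-dimensional monotone polytopes, then $\mathcal{E}(P)=\mathcal{E}(P_1)\times\mathcal{E}(P_2)$ and a unimodular basis of each factor assembles into one for $P$. If $P$ is indecomposable, I would attempt a ``basis swap'': whenever some $u_i$ fails the inequality $|\langle a_F,u_i\rangle|\leq 1$ at a far facet $F$, replace it by $u_i-\alpha u_j$ for a suitable $j$ with $\langle a_F,u_j\rangle\neq 0$ and $\alpha\in\Z$, preserving unimodularity of the basis while reducing the offending pairing modulo $|\langle a_F,u_j\rangle|$. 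The modification must then be checked against all other facets. Together with this one would exploit the reflexivity-induced involution $x\mapsto -x$ on $\mathcal{E}(P)$, which pairs candidate basis vectors with their negatives and often lets one trade a problematic $u_i$ for a primitive edge direction at the opposite vertex $-v$ (or at an antipodal face, when $-v\notin P$).

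The main obstacle is controlling these swaps globally: a modification that cures $F$ may create a violation at some other facet $F'$, and it is unclear that a globally consistent sequence of swaps always exists. I expect this is precisely where the new hypotheses of \emph{deep monotonicity} and \emph{neatness} announced in the abstract intervene, imposing a uniform bound on $\langle a_F,u\rangle$ for primitive edge directions at distinguished vertices and thereby making the swap procedure terminate. Concretely, my partial target would be to establish Ewald's Conjecture under the condition that some vertex $v$ admits an edge basis $(u_1,\ldots,u_n)$ with $|\langle a_F,u_i\rangle|\leq 1$ for every facet $F$; this covers products of lower-dimensional monotone polytopes, centrally symmetric $P$ (where $u_i\in P$ forces $-u_i\in P$), and polytopes whose automorphism group acts transitively on a vertex star. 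Low-dimensional cases (say $n\leq 8$) can additionally be verified computationally using {\O}bro's classification, both as a sanity check and to guide the form of the general hypothesis that unlocks the inductive step.
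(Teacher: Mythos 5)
There is a fundamental mismatch here: the statement you were asked about is Ewald's \emph{Conjecture}, which remains open, and the paper does not prove it in general. It only establishes special cases --- deeply monotone polytopes (Theorem~\ref{thm:deep-ewald}), certain fiber bundles via the neatness condition (Corollary~\ref{cor:pmsym}), and the computational verification up to dimension $7$ due to {\O}bro. Your proposal, to its credit, does not pretend otherwise: you explicitly flag that the ``basis swap'' step may fail to terminate globally, and that is exactly where the argument has no content yet. A swap $u_i\mapsto u_i-\alpha u_j$ that fixes the pairing against one far facet can worsen it against another, and nothing in smoothness or reflexivity alone gives you a potential function or ordering under which the process converges. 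So as a proof of Conjecture~\ref{ec} this is a genuine gap, not a fixable oversight.

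Two more specific comments. First, your ``partial target'' --- assume some vertex $v$ has an edge basis with $\lvert\langle a_F,u_i\rangle\rvert\le 1$ for every facet $F$ --- is vacuous as stated: that condition is literally the assertion that each $u_i$ lies in $P\cap(-P)$, i.e.\ that $\{u_1,\dots,u_n\}\subset\mathcal{E}(P)$, so there is nothing left to prove under it. The substance must lie in identifying \emph{geometric} hypotheses on $P$ that force such a vertex to exist; this is what the paper's notion of a deeply monotone polytope accomplishes (containment of the corner parallelepiped at every vertex implies, via the first-displacement machinery of Lemma~\ref{hyp-monotone} and Theorem~\ref{thm:deeply}, that the first displacement of each edge at $v$ is the segment $[-u_i,u_i]$, hence $u_i\in\mathcal{E}(P)$). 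Relatedly, your claim that central symmetry of $P$ forces $u_i\in P$ is unjustified: $v+u_i\in P$ does not give $u_i\in P$, since $\mathcal{E}(P)$ is not closed under differences. Second, the product case you handle correctly matches Proposition~\ref{prop:cartesian}, but the paper's actual route to a ``broad case'' is structurally different from basis swapping: it is an induction on dimension through first displacements of faces (Lemma~\ref{lemma:UT-free} and Theorem~\ref{thm:deep-ewald}), where the obstruction is precisely the appearance of unimodular triangles as $2$-faces. If you want a tractable partial result, that displacement mechanism --- rather than local modifications of a single edge basis --- is the lever that actually works.
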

	
	The conjecture has been verified computationally for $n\leq 7$ by \O{}bro~\cite[page 67]{Oebro-tesis}, but little more is known about it. Both Payne and McDuff  \cite{McDuff-probes, Payne} remark that it is not even known whether there is a monotone polytope with $\mathcal{E}(P)=\{0\}$.

\begin{remark}
The Ewald set  of a rational polytope appears also in Payne's work on Frobenius splittings of toric varieties~\cite{Payne}. Although this is less related to our paper, let us state Payne's main result. Let $X$ be an $n$-dimensional (algebraic) toric variety, with associated fan $\Sigma_X$ in $\R^n$, and let $u_1,\dots,u_m$ be the primitive generators for the rays of $\Sigma_X$. (E.g., if $X$ is smooth then $\{u_i\}_i$ are the primitive facet normals for the corresponding smooth polytope $P$).
Let us call \emph{splitting polytope} of $X$ the (perhaps non-lattice) polytope
\[
P_X:=\{v \in \R^n \,|\, u_i\cdot v \le 1\ \forall i\}.
\]
Observe that all reflexive polytopes (in particular monotone ones) are examples of $P_X$ for some $X$. Payne's main result~\cite[Theorem 1.2]{Payne} says that $X$ is \emph{diagonally split} if and only if there is a prime $q\in \N$ such that $\mathcal{E}(\Int(q P_X))$ contains representatives for all the classes in $q \Z^n/\Z^n$.
\end{remark}

\subsection*{Three Ewald conditions and their relation to  symplectic  toric geometry}

\O{}bro's computational verification of Conjecture~\ref{ec} for $n\le 7$ shows the following strong version of it:  for every facet $F$ of $P$, $\mathcal{E}(P)\cap F$ contains a unimodular basis. We say that a polytope has the weak (resp.~strong) Ewald condition if Ewald's Conjecture \ref{ec} (resp. this strong form of it) holds for $P$. 

McDuff~\cite{McDuff-probes} introduces yet a third version of the Ewald property, that she calls \emph{star Ewald} (see Definition~\ref{ewaldcond}), motivated by the following problem in toric symplectic geometry.

It is known that every symplectic toric manifold $M$ has a particular \emph{central} toric orbit that is not \emph{displaceable} by a Lagrangian isotopy.
A relevant question is whether for a given manifold this central orbit is the only non-displaceable one. If this happens then the central orbit is called a \emph{stem}. 
 (See details on this in Section~\ref{sec:toric} and the references therein).
McDuff relates displaceability of toric orbits in $M$ to \emph{displaceability by probes}  of points in the corresponding momentum polytope (a concept that she defines). More precisely, she proves the following:

\begin{enumerate}
\item Let $M$ be a toric symplectic manifold with momentum polytope $P$.
If a point $u\in \Int(P)$ is displaceable by a probe then its fiber $L_u\subset M$ is displaceable by a Hamiltonian isotopy~\cite[Lemma 2.4]{McDuff-probes}.
\item A monotone polytope $P$ has the star Ewald property if and only if every point of $\Int(P)\setminus \{0\}$ is displaceable by a probe~\cite[Theorem 1.2]{McDuff-probes}.
\end{enumerate}

\begin{corollary}[McDuff~\cite{McDuff-probes}]
\label{cor:McDuff}
If the momentum polytope of a monotone sympletic toric manifold satisfies the star Ewald condition then the central fiber is a stem.
\end{corollary}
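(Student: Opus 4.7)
The plan is to combine the two results of McDuff recalled just before the statement with the standard fact that the central toric orbit of a symplectic toric manifold is non-displaceable. The argument is a short chain of implications and requires no essentially new input.

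First, I would unpack the hypothesis. Let $M$ be a monotone symplectic toric manifold with momentum polytope $P$; by convention $P$ is a monotone lattice polytope and its interior contains the origin, which is the image of the central orbit. Assume $P$ satisfies the star Ewald condition. By statement~(2) above (McDuff~\cite[Theorem~1.2]{McDuff-probes}), every point $u \in \Int(P)\setminus\{0\}$ is displaceable by a probe.

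Next I would apply statement~(1): by McDuff~\cite[Lemma~2.4]{McDuff-probes}, whenever $u \in \Int(P)$ is displaceable by a probe, the corresponding Lagrangian torus fiber $L_u \subset M$ is displaceable by a Hamiltonian isotopy. Combining this with the previous paragraph, every Lagrangian fiber $L_u$ with $u \in \Int(P)\setminus\{0\}$ is Hamiltonian displaceable.

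Finally I would recall that the central fiber $L_0$, which corresponds to the unique point $0 \in \Int(P)$ fixed by the symmetry making $M$ monotone, is itself non-displaceable; this is the classical fact alluded to in the paragraph preceding Corollary~\ref{cor:McDuff} and is the reason $L_0$ is singled out as \emph{central}. Since $L_0$ is non-displaceable and every other Lagrangian toric fiber $L_u$ (i.e., every $u \in \Int(P)\setminus\{0\}$) is displaceable, $L_0$ is by definition a stem, which concludes the proof.

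There is no serious obstacle here: the statement is genuinely a corollary, and the only thing to be careful about is the convention that a stem concerns Lagrangian (i.e., full-dimensional) toric fibers, so that displaceability needs to be checked only for points of $\Int(P)$, which is exactly what McDuff's probe machinery provides under the star Ewald hypothesis.
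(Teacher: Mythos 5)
Your proof is correct and is essentially the argument the paper intends: the paper itself only remarks that the corollary ``follows from'' McDuff's two results (restated as Theorem~\ref{thm:McDuff}), combined with the Entov--Polterovich fact that the central fiber is non-displaceable, which is exactly the chain of implications you spell out. No gaps.
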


The star Ewald condition is stronger than the weak Ewald condition by \cite[Lemma 3.7]{McDuff-probes}. However, there are 6-dimensional monotone polytopes where the star Ewald condition fails~\cite[footnote to p. 134]{McDuff-probes} (see also our Proposition~\ref{paff}). Hence, the strong Ewald property 
does not imply the star Ewald property.

\subsection*{Nill's Conjecture}

Nill~\cite{cctv}  proposed to generalize Conjecture~\ref{ec} to smooth polytopes:

\begin{conjecture}[General Ewald's Conjecture, Nill 2009 {\cite{cctv}}] \label{gec}
	If $P$ is an $n$-dimensional smooth lattice polytope in $\R^n$ with the origin in its interior then ${\mathcal{E}}(P)$ contains a unimodular basis of $\Z^n$.
\end{conjecture}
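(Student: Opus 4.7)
The plan is to prove Nill's Conjecture~\ref{gec} in dimension $2$ via a direct structural analysis, and to treat several families of higher-dimensional smooth lattice polytopes by reducing them to the monotone case already handled in earlier sections of the paper.

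For $n=2$, let $P$ be a smooth lattice polygon with $0 \in \Int(P)$. We produce two primitive lattice points $v, w \in \mathcal{E}(P)$ with $|\det(v,w)|=1$. The first step is to show that $\mathcal{E}(P)$ contains at least one primitive lattice vector $v$; this uses that $P$ is a lattice polygon with $0$ in its interior together with smoothness, which rules out degenerately ``thin'' shapes by forcing the two primitive edge directions at every vertex to form a unimodular basis of $\Z^2$. After a unimodular change of coordinates we assume $v = e_1$. For the second step, we must find an integer $\alpha$ such that $\alpha e_1 + e_2 \in \mathcal{E}(P)$. The smoothness condition controls how $\partial P$ crosses the horizontal lines $y = \pm 1$: each such crossing occurs along an edge whose primitive direction has second coordinate $\pm 1$, so the horizontal section $P \cap \{y=1\}$ has lattice length at least $1$, and similarly for $P \cap \{y=-1\}$. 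A pigeonhole-type argument on the integer points of these two sections then yields an $\alpha$ with $\alpha e_1 + e_2 \in P \cap -P$, completing the construction.

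For higher dimensions, the same circle of ideas gives Nill's Conjecture for several natural families. If $P = P_1 \times P_2$ is a product of smooth lattice polytopes with $0 \in \Int(P_i)$ in each factor, unimodular bases for the $\mathcal{E}(P_i)$ concatenate to one for $\mathcal{E}(P)$. If $P$ contains in its interior a monotone polytope $Q$ with $0 \in \Int(Q)$, then $\mathcal{E}(Q) \subseteq \mathcal{E}(P)$ and the unimodular basis provided by the results on Ewald's Conjecture~\ref{ec} applies; this handles, among others, all integer dilates $kQ$ of a monotone $Q$ for $k \geq 1$.

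The main obstacle for a general higher-dimensional proof is that the polar $P^\circ$ is only rational, not a lattice polytope, so the combinatorial duality between facets of $P$ and vertices of $P^\circ$ exploited in the monotone (reflexive) case is lost. In dimension $2$ the vertex-smoothness condition is strong enough to rigidly control the combinatorial type and recover the needed symmetric lattice points by hand, but in dimension $\geq 3$ the combinatorial variety of smooth lattice polytopes with $0 \in \Int(P)$ appears substantially larger, and it is unclear how to locate a unimodular basis of symmetric lattice points without the reflexivity constraint. Any general attack would likely require a new tool — perhaps a non-trivial lower bound on the ``symmetric lattice width'' of $P$ — that is insensitive to this loss of duality.
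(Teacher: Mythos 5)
Your overall scope matches the paper's: Conjecture~\ref{gec} remains open, and what is actually established is the two-dimensional case (Corollary~\ref{coro:dim2}) plus partial results in higher dimension. But your two-dimensional argument has two genuine gaps. First, the existence of a nonzero point in $\mathcal{E}(P)$ is asserted, not proved: ``smoothness rules out degenerately thin shapes'' is not the right mechanism. The paper's Lemma~\ref{lemma:s=0} shows that the lattice polygons with $\mathcal{E}(P)=\{0\}$ are exactly the triangles $T_a=\conv\{(1,0),(0,1),(-a,-a)\}$, which are not thin at all for large $a$ (they have large area and width); ruling them out requires an actual argument, namely that every smooth complete $2$-dimensional fan either contains two opposite rays or is the fan of $\proj^2$, forcing $(1,0),(0,1),(-1,-1)\in P$ and confining all further lattice points to a single ray. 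You need this (or an equivalent) step. Second, your mechanism for producing $\alpha e_1+e_2\in\mathcal{E}(P)$ does not work as stated: it is false that $\partial P$ crosses $y=\pm1$ only along edges whose primitive direction has second coordinate $\pm1$, and more importantly ``each section has lattice length at least $1$'' is not what makes the pigeonhole succeed --- two long integer intervals $I_1,I_{-1}$ can easily satisfy $I_1\cap(-I_{-1})=\varnothing$. The real input is convexity together with $\pm e_1\in P$, which (when no vertex of $P$ lies on $y=0$) gives $l_{-1}+l_1=2l_0\le-2$ and $r_{-1}+r_1=2r_0\ge2$ for the left and right endpoints of the sections, hence $0\in I_1+I_{-1}$; and when a vertex does lie on $y=0$ this inequality fails and one must cut the vertex off using (quasi-)smoothness, as in Lemma~\ref{lemma:quasi-smooth2}. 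Your sketch addresses neither the positioning constraint nor the vertex-on-the-axis case.

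For the higher-dimensional part, your two families are weaker than the paper's and one of them is conditional: the observation that $Q\subseteq P$ with $0\in\Int(Q)$ gives $\mathcal{E}(Q)\subseteq\mathcal{E}(P)$ is correct, but invoking ``the results on Ewald's Conjecture~\ref{ec}'' for an arbitrary monotone $Q$ assumes a conjecture that is itself open (it is only known for $n\le7$ and for deeply monotone polytopes). The paper's unconditional partial results are of a different nature: Proposition~\ref{prop:Nill-higherdim} (deeply smooth $P$ with the origin next to a vertex, via first displacements of edges) and Proposition~\ref{prop:Nill-dim3} (smooth $3$-polytopes with the origin next to an edge, via quasi-smoothness of first displacements of facets and the two-dimensional case). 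If you want to salvage your plan, the displacement machinery of Section~\ref{sec:deep} is the tool that replaces the lost reflexivity, not containment of a monotone subpolytope.
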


This is, in principle,  stronger than Conjecture~\ref{ec}, but it might actually be equivalent; as Nill points out, Conjecture~\ref{ec} imples that ${\mathcal{E}}(P)$ linearly spans $\R^n$ for every smooth lattice polytope $P$ with $0\in \Int(P)$. (The implication is not in a dimension-by-dimension basis).

 \subsection*{Organization of this paper}
In Section \ref{sec:results} we discuss in some detail the structure of the paper and our results, and 
in Section~\ref{ws} we discuss preliminary results on monotone polytopes that  we need for the remaining of the paper. 
More specifically we recall basic facts about smooth, reflexive, and monotone polytopes, and we review the relation among the three Ewald properties.
After that, we have four sections containing our main results:

\begin{itemize}
\item
In Section~\ref{sec:deep} we show that the monotone polytopes containing the unit parallelepiped 
embedded at each vertex satisfy the three Ewald properties (Theorem~\ref{thm:deep-ewald}). We call them
\emph{deeply monotone} and, as a proof that they form a nice class, we show (Theorem~\ref{thm:deeply}) that they coincide with those that, recursively, do not contain unimodular triangles as faces and with those with the following property: for every face $f$ of $P$, the first displacement of $f$ (Definition~\ref{def:displacement}) has the same normal fan as $f$. 
\item
In Section~\ref{sec:bundles} we study the behavior of $\mathcal{E}(P)$ under taking \emph{polytope bundles}, which was already considered by McDuff. We extend some of her results, for example showing that a bundle is monotone if and only if both the fiber and base are monotone (McDuff proved one direction).
We also identify a certain property of a monotone polytope, that we call neatness, which is necessary for all bundles with fiber $P$ to satisfy Conjecture~\ref{ec} (and the star version of it) and sufficient for bundles with base a monotone segment. Thus, Conjecture~\ref{ec} would imply all monotone polytopes to be 
neat; interestingly, we show in Theorem~\ref{thm:oda} that this would also follow from a famous conjecture of Oda~\cite{HNPS2008,mfo2007,Oda1997}. Thus, it is natural to conjecture that all monotone polytopes are neat.
\item
In Section~\ref{sec:number} we study the number of Ewald points that a monotone polytope can have. It is easy to upper bound this by $3^n$, and we show there are monotone polytopes with only $3^{(2n+1)/3} \simeq 2.08^n$ of them.

\item
In Section~\ref{sec:Nill} we prove the more general Nill's Conjecture in dimension 2, and give partial results 
on it in dimension 3 and higher.
For example, we prove it for \emph{deeply smooth polytopes} if the origin is ``next to a vertex".
\end{itemize}

In
Section~\ref{sec:toric} we review the role of monotone polytopes in symplectic geometry and the implications of the Ewald conditions in this context. 
As an application, we state two corollaries of our results for symplectic toric manifolds in Theorems~\ref{thm:stem} and \ref{thm:symplectic}.

\subsection*{Acknowledgements}
We thank M\'onica Blanco (Universidad de Cantabria) for comments which improved the paper.
We thank Jo\'e Brendel for pointing out to us a connection of his paper \cite{Brendel} with our work, which has resulted in the inclusion of Theorem \ref{thm:symplectic} in the present paper.
We thank Andreas Paffenholz for communicating to us the example in the proof of Proposition~\ref{paff}, and Benjamin Nill for helpful conversations regarding Conjecture~\ref{gec}.

The first and third  authors are funded by grants PID2019-106188GB-I00 and PID2022-137283NB-C21 of
MCIN/AEI/10.13039/501100011033 / FEDER, UE and by project CLaPPo (21.SI03.64658) of Universidad
de Cantabria and Banco Santander. 

 The second author is funded by a BBVA (Bank Bilbao Vizcaya Argentaria) Foundation Grant for Scientific Research Projects with title \textit{From Integrability
to Randomness in Symplectic and Quantum Geometry}. He thanks the Dean of the
School of Mathematics Antonio Br\'u and the Chair
of the Department of Algebra, Geometry and Topology at the Complutense University of Madrid, Rutwig Campoamor, for their support
and excellent resources he is being provided with to carry out the BBVA project. He also thanks the Department of Mathematics, Statistics and Computation at the
University of Cantabria for inviting him in July and August 2023 for a visit during which
part of this paper was written, and the Universidad Internacional Men\'endez Pelayo (UIMP)
for the hospitality during his visit.

\quitado{
\emph{The second part of the paper is the ``algorithmic part"}, consisting only of one section, but which relies heavily on the previous sections. This part helps strengthening the connection between theory and 
computation in this context, via the construction of algorithms; more concretely:
\begin{itemize}
\item
In Section~\ref{algo}, which is divided into three parts, we give  algorithms which implement the
theoretical proofs of the main results of the paper.
\begin{itemize}
\item
Section~\ref{xc}  gives the algorithm we use to detect deeply monotone polytopes.
\item
Section~\ref{a1} gives the algorithm corresponding to the proof of Theorem~\ref{thm:deep-ewald}.
\item
Section~\ref{a2} gives the algorithm corresponding to the proof of Theorem~\ref{gec2}.
\end{itemize}
\end{itemize}
}

 \section{Main results}
 \label{sec:results}

\subsection*{Deeply monotone polytopes}

Section~\ref{sec:deep} is motivated by the following well-known property of smooth lattice polytopes, not shared by non-smooth ones: if a facet $F$ of such a polytope is moved one lattice unit towards the interior, the resulting polytope $F_0$ is still a lattice polytope. Moreover, if $F$ is reflexive then $F_0$ is automatically reflexive (Lemma~\ref{hyp-monotone}).
By ``moving a facet one unit'' we mean subtracting one to the right-hand side in its primitive facet-defining inequality. See details in Definition~\ref{def:displacement} and Lemma~\ref{hyp-monotone}, where $F'$ is called the \emph{first displacement} of $F$.

This would provide a recursive proof of Ewald's conjecture for all monotone polytopes, were it not for the fact that $F_0$ may in general no longer be smooth. We thus ask  what smooth lattice polytopes  are guaranteed to have all first displacements of facets and of lower-dimensional faces (which are needed for inductive arguments) smooth. Theorem~\ref{thm:deeply} says that this class can be characterized by any of the following equivalent properties:
\begin{itemize}
\item For every vertex $v$ of $P$, $P$ contains the unit parallelepiped generated by the edge-vectors from $v$.
\item For every face $f$ of $P$, the first displacement of $f$ has the same normal fan as $f$.
\item No unimodular triangle is a face of either $P$ or the first displacement of a face of $P$.
\end{itemize}

Based on the first characterization we call these polytopes \emph{deeply smooth} and, in case they are monotone, \emph{deeply monotone} (Definition~\ref{defi:deeply}). Our main result in Section~\ref{sec:deep} is:

\begin{theorem}[{Theorem \ref{thm:deep-ewald}}] 
	Every deeply monotone polytope satisfies the strong and star Ewald conditions (and, consequently, also the weak condition).
\end{theorem}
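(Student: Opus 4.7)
We would proceed by induction on the dimension $n$, with the case $n\le 1$ trivial. The key structural input from Theorem~\ref{thm:deeply} is that, for any facet $F$ of a deeply monotone polytope $P$, the first displacement $F'$ is a smooth lattice $(n-1)$-polytope lying in the hyperplane $\{u_F\cdot x=0\}$, sharing the normal fan of $F$ and containing the origin in its relative interior; in particular $F'$ is itself deeply monotone (in the hyperplane sublattice), so the inductive hypothesis applies to it. The analogous statement for first displacements of lower-dimensional faces of $P$ will be needed when we come to the star Ewald condition.

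For the strong Ewald condition on $F$, the induction applied to $F'$ furnishes a unimodular basis $q_1,\ldots,q_{n-1}$ of $\Z^n\cap\{u_F\cdot x=0\}$ consisting of Ewald points of $F'$, hence automatically of $P$. To produce Ewald points on $F$ we translate the $q_i$ by adaptively-chosen transverse primitive edge vectors. Because $F$ and $F'$ share a normal fan, their vertices are in canonical bijection: each vertex $v$ of $F$ corresponds to $v'=v+e^v\in F'$, where $e^v$ is the primitive edge vector from $v$ transverse to $F$. For each $i$, let $v_i'$ be the vertex of $F'$ whose normal cone contains $-q_i$, and set $p_i:=q_i-e^{v_i}\in F$. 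The Ewald verification $-p_i=e^{v_i}-q_i\in P$ combines the fact that $-q_i\in F'\subset P$ with the unit parallelepiped property of deep monotonicity at $v_i$, which controls how a shift by $e^{v_i}$ interacts with each facet inequality of $P$. Adjoining a final Ewald point on $F$ (for instance $-e^v$ at a vertex $v$, whose membership in $\mathcal{E}(P)$ again follows from the parallelepiped at $v$) completes the $p_i$ to a unimodular basis of $\Z^n$ inside $\mathcal{E}(P)\cap F$.

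For the star Ewald condition, the same construction is performed face by face. Since every face of a deeply monotone polytope is itself deeply monotone in its own lattice, the induction proceeds in parallel on all faces. The additional requirement that $-p$ lies outside $\mathrm{Star}(f)$ is enforced by the adaptive choice of shifting vertex: by selecting the shift at the vertex matched to $-q_i$, rather than $q_i$, the antipode $-p$ is steered onto the portion of $\partial P$ that is combinatorially antipodal to $f$, and in particular disjoint from the star of $f$.

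The main obstacle will be the verification that $-p_i\in P$ for every facet normal of $P$, including those of facets that do not contain $v_i$. The adaptive rule of locating $v_i$ by the position of $-q_i$ (so that $-q_i$ sits in the normal cone of $v_i'$ in $F'$) is designed precisely so that $e^{v_i}$ translates $-q_i$ within a region guaranteed to be in $P$ by the uniform parallelepiped containment of deep monotonicity at every vertex; carrying this case analysis out carefully for both facets at $v_i$ and facets far from $v_i$ is where the bulk of the work lies.
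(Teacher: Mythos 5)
Your plan is a genuinely different route from the paper's (induction on dimension, lifting a unimodular basis from the displaced facet $F'$ back up to $F$ via transverse edge vectors), but as written it has a real gap, and the specific lifting rule you propose is already wrong in dimension two. Take $P$ to be the monotone hexagon with vertices $(1,-1),(1,0),(0,1),(-1,1),(-1,0),(0,-1)$, which is deeply monotone, and let $F$ be the facet on $\{x=1\}$, so $F'=P\cap\{x=0\}=\conv\{(0,-1),(0,1)\}$ and $q_1=(0,1)$. Your rule picks the vertex $v_1'$ of $F'$ whose normal cone contains $-q_1=(0,-1)$, i.e.\ $v_1'=(0,-1)$, matched to $v_1=(1,-1)$ with transverse edge vector $e^{v_1}=(-1,0)$; then $p_1=q_1-e^{v_1}=(1,1)$, which violates $x+y\le 1$ and so is not even in $P$ (and $-p_1=(-1,-1)$ violates $x+y\ge -1$). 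Reversing the rule (matching to the vertex near $+q_i$) repairs this example, but then the membership $-p_i=-q_i+e^{v_i}\in P$ is being asserted for a shift by the edge vector at the vertex \emph{far} from $-q_i$, and nothing in the corner-parallelepiped property at $v_i$ obviously controls the facets of $P$ near $-q_i$. You yourself flag this verification as ``the bulk of the work'', and the star-Ewald paragraph (steering $-p$ onto a ``combinatorially antipodal'' part of $\partial P$) is not yet an argument. So the proposal is a strategy with its central step missing and, in its stated form, incorrect.

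For comparison, the paper's proof avoids induction on the ambient dimension entirely. It first shows that for any two primitive edge vectors $u_1,u_2$ at a common vertex $v$, the set $\mathcal{E}(P)$ contains $u_1$, $u_2$ and one of $u_1\pm u_2$: the first displacement of the edge in direction $u_i$ is the monotone segment $[-u_i,u_i]$, and the first displacement of the $2$-face spanned by $u_1,u_2$ is a monotone polygon, for which the strong Ewald condition is checked by inspection of the five cases. The strong condition for a facet $F_1$ then follows at once by taking the basis $\{-u_1\}\cup\{-u_1\pm u_i\}_{i\ge 2}$ at a vertex of $F_1$ with $u_1$ the edge direction not in $F_1$, and the star condition is a one-line consequence of $\pm u\in\mathcal{E}(P)$ for a suitable edge direction $u$. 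If you want to keep an inductive lifting argument, the place to look is the paper's Lemma on $\UT$-free polytopes, which combines bases from \emph{two} displaced facets adjacent to $F$ rather than translating a single basis transversally.
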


As far as we know this is the first proof of a broad case of Ewald's Conjecture in arbitrary dimension. In Table~\ref{ttt} we have computed how many monotone polytopes fall within this class for $n\le 6$, giving more than 1\,000 in dimension six.

\subsection*{(Monotone) fiber bundles and neat polytopes}
In Section~\ref{sec:bundles} we study fiber bundles. 
Given two  polytopes $B\subset  \R^k$ and $Q\subset \R^n$ a \emph{fiber bundle with base $B$ and fiber $Q$} is a polytope $P\subset  \R^{k+n}$ combinatorially isomorphic to $B\times Q$, that projects to $B$, and such that its intersection with $\{x\}\times \R^n$ has the normal fan of $Q$, for every $x\in B$. If $B$ is monotone we assume further that $Q$ is unimodularly equivalent to $\{0\}\times Q$.
See more details in Definition~\ref{defbundle}. 
\footnote{Our definition is equivalent to the ones in~\cite{McDuff-probes,McDTol} but our fibers and bases are swapped with respect to theirs due to the fact that their definition is phrased dually, in the normal fans.}

We offer an explicit description of bundles in ``canonical coordinates'' (Proposition~\ref{prop:bundle}) which allows us to show in Theorem~\ref{monbundle} that a bundle is smooth (resp. monotone) if and only if both its base and its fiber are smooth (resp. monotone). This extends \cite[Lemma 5.2]{McDuff-probes}, which shows only one direction.

We then look at the Ewald sets of bundles. If $P$ is a monotone bundle with fiber $Q$ and base $B$ then, with the natural identification $Q\cong \{0\}\times Q$, we have that  $\mathcal{E}(Q) \subset \mathcal{E}(P)$  (Proposition~\ref{prop:bundle-fiber}). It is then natural to ask under what conditions we have the analog property for the base: that every point in $\mathcal{E}(B)$ lifts to  $\mathcal{E}(P)$. The answer is the following:

\begin{definition}[Neat polytope]
\label{ineqs}
\label{def:pm}
	Let $m,n\in\N$. Let $P$ be a smooth lattice polytope in $\R^n$ defined by the inequalities $Ax\le c$, where $A\in\Z^{m\times n}$ and $c \in \Z^m$.
	For each $b\in \Z^m$ we define
	\[
	P_b:=\{x\in\R^n:Ax\le c+b\}
	\]
	and call it the displacement of $P$ by $b$. We say that $P$ is \textit{neat}
	if whenever $P_b$ and $P_{-b}$ are normally isomorphic to (i.e., have the same normal fan as)  $P$ for a $b\in\Z^m$ we have that
	\[
	P_b \cap (-P_{-b}) \cap \Z^n \ne \varnothing;
	\]
	that is,  there is an integer point $x\in P_b$ such that $-x\in P_{-b}$.
\end{definition}

Our main result in Section~\ref{sec:bundles} is that this condition is precisely what is needed in the fiber $Q$ for the Ewald properties to be preserved under fiber bundles:

\begin{theorem}[Corollary~\ref{cor:pmsym}]
	For a lattice smooth polytope $Q$ the following properties are equivalent:
\begin{enumerate}
\item $Q$ is  neat  and satisfies the weak (resp.~star) Ewald condition.
\item Every lattice smooth bundle $P$ with fiber $Q$ and base $[-1,1]$ satisfies the weak (resp.~star) Ewald condition.
\item Every lattice smooth bundle $P$ with fiber $Q$ and an arbitrary base $B$ satisfies the weak (resp.~star) Ewald condition whenever $B$ satisfies it.
\end{enumerate}	
\end{theorem}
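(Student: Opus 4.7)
The plan is to establish the cycle $(1) \Rightarrow (3) \Rightarrow (2) \Rightarrow (1)$. The implication $(3) \Rightarrow (2)$ is immediate, since $[-1,1]$ is a $1$-dimensional monotone polytope whose Ewald set $\{-1, 0, 1\}$ contains the unimodular basis $\{1\}$ of $\Z$ and for which all three Ewald conditions hold tautologically.

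For $(1) \Rightarrow (3)$ I would use the canonical coordinate description of Proposition~\ref{prop:bundle} to write $P = \{(y, x) \in B \times \R^n : A x \le c + f(y)\}$ for an affine map $f : \R^k \to \R^m$, normalizing the fiber so that $f(0) = 0$. Then $f(-y) = -f(y)$, so the slice of $P$ over $-y$ is the displacement $P_{-f(y)}$, and both $P_{f(y)}$ and $P_{-f(y)}$ are normally isomorphic to $Q$ by the bundle condition. For each $y \in \mathcal{E}(B)$, the neatness of $Q$ applied with $b = f(y)$ produces an integer point $x_y \in P_{f(y)} \cap (-P_{-f(y)})$, and then $(y, x_y) \in \mathcal{E}(P)$. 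Lifting a unimodular basis $\{y_1, \dots, y_k\} \subset \mathcal{E}(B)$ of $\Z^k$ to Ewald points $(y_j, x_{y_j}) \in \mathcal{E}(P)$, and combining these with points $\{(0, v_1), \dots, (0, v_n)\} \subset \mathcal{E}(P)$ coming from a basis $\{v_i\} \subset \mathcal{E}(Q)$ (these are Ewald points of $P$ by Proposition~\ref{prop:bundle-fiber}), yields a unimodular basis of $\Z^{k+n}$ inside $\mathcal{E}(P)$: the resulting matrix is block-triangular with unimodular diagonal blocks.

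For $(2) \Rightarrow (1)$ I would split the conclusion into its two parts. To establish neatness, given any $b \in \Z^m$ with $P_b$ and $P_{-b}$ normally isomorphic to $Q$, I would invoke Proposition~\ref{prop:bundle} to construct a smooth lattice bundle $P^{(b)}$ over $[-1,1]$ whose slice at $t$ is $P_{tb}$. A direct computation gives
\[
\mathcal{E}(P^{(b)}) = \bigl(\{-1\} \times (P_{-b} \cap -P_b)\bigr) \cup \bigl(\{0\} \times \mathcal{E}(Q)\bigr) \cup \bigl(\{1\} \times (P_b \cap -P_{-b})\bigr),
\]
intersected with $\Z^{n+1}$. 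By $(2)$, this set contains a unimodular basis of $\Z^{n+1}$, which must include a vector with first coordinate $\pm 1$, and the second coordinate of such a vector witnesses the non-emptiness required for neatness. To recover the weak Ewald property of $Q$, I would apply $(2)$ to the trivial bundle $[-1,1] \times Q$, whose Ewald set equals $\{-1,0,1\} \times \mathcal{E}(Q)$, and extract from the resulting basis of $\Z^{n+1}$ an actual basis of $\Z^n$ inside $\mathcal{E}(Q)$.

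The main obstacle is this last extraction step: a naive row reduction in the basis of $\Z^{n+1}$ produces vectors of the form $x_i - t_i x_1$, which need not be Ewald points of $Q$. The route I would pursue is a lemma asserting that if $S \subseteq \Z^n$ is symmetric, contains $0$, contains every lattice point on the segment from $0$ to each of its members, and $\{-1,0,1\} \times S$ contains a unimodular basis of $\Z^{n+1}$, then $S$ itself contains a unimodular basis of $\Z^n$; the segment-closure hypothesis is exactly what Ewald sets enjoy and it supplies primitive representatives for each direction appearing in a projected basis, after which a cofactor expansion of the $(n+1)\times(n+1)$ determinant shows that the $n$-subset determinants of these primitives have $\gcd 1$, forcing one of them to be $\pm 1$. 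For the star Ewald version the same scheme is applied face-by-face: the facets of $P$ decompose into bundles over facets of $B$ and into facets of the form $\{\pm 1\} \times F'$ that correspond to displacements of facets of $Q$, so the star condition on $P$ translates into the star condition on both factors together with neatness along every relevant face.
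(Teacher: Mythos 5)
Your architecture coincides with the paper's: (1)$\Rightarrow$(3) is the lifting argument of Theorem~\ref{thm:pmsym} (neatness applied to the fibers over $\pm y$, followed by a block-triangular basis), the neatness half of (2)$\Rightarrow$(1) uses the interpolating bundle $\conv(\{1\}\times P_b\cup\{-1\}\times P_{-b})$ exactly as in the paper, and your star-Ewald case is meant to follow McDuff's face-by-face analysis (your sketch there is vague --- ``neatness along every relevant face'' is not a defined notion --- but the intended split of a face $F=F_B\times F_Q$ into the cases $F_Q=Q$ and $F_Q\subsetneq Q$ is the paper's and is recoverable). The one place where you depart from the paper is the extraction of a unimodular basis of $\Z^n$ inside $\mathcal{E}(Q)$ from one of $\Z^{n+1}$ inside $\{-1,0,1\}\times\mathcal{E}(Q)$; the paper routes this through Proposition~\ref{prop:cartesian}, whereas you propose a self-contained lemma, and that lemma is where the proposal breaks.

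The lemma is false as stated, and its final inference (``the $n$-subset determinants have $\gcd$ equal to $1$, forcing one of them to be $\pm 1$'') is a non sequitur: a family of integers with $\gcd$ one need not contain a unit. Concretely, take $n=2$ and $S=\{0,\pm(1,2),\pm(1,-2),\pm(2,1)\}$. This $S$ is symmetric, contains $0$, and is segment-closed (every nonzero element is primitive), yet the pairwise determinants of its nonzero elements have absolute values $0$, $3$, $4$, $5$, so $S$ contains no unimodular basis of $\Z^2$; nevertheless $\{(0,1,2),(-1,1,-2),(-1,2,1)\}\subset\{-1,0,1\}\times S$ is a unimodular basis of $\Z^3$ (its determinant is $1$). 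Hence symmetry plus segment-closure cannot be ``exactly what Ewald sets enjoy'' for this purpose: what rescues the statement for genuine Ewald sets is the full convexity of $Q\cap(-Q)$ --- in the example above the convex hull of $S$ already contains $(1,0)=\frac{1}{2}(1,2)+\frac{1}{2}(1,-2)$, which together with $(2,1)$ is a unimodular basis. You therefore need to either invoke the converse direction of Proposition~\ref{prop:cartesian} for the product $[-1,1]\times Q$, or reprove that extraction using convexity of $Q\cap(-Q)$ rather than mere segment-closure. Everything else in the proposal is sound and parallels the paper's proof.
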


\begin{corollary} \label{lastcor}
\label{second}
\begin{itemize}
	\item If Conjecture \ref{ec} holds then every monotone polytope is neat.
	\item If Conjecture \ref{gec} holds then every lattice smooth polytope is neat.
\end{itemize}
\end{corollary}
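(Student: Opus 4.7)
The plan is to derive both bullets directly from the equivalence (1) $\Leftrightarrow$ (2) in Corollary~\ref{cor:pmsym}, using Theorem~\ref{monbundle} to keep the bundles we construct inside the class where the relevant Ewald-type conjecture is assumed to hold.

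For the first bullet, I would fix an arbitrary monotone polytope $Q$ and assume Conjecture~\ref{ec}. To apply Corollary~\ref{cor:pmsym} it suffices to verify its condition (2): let $P$ be any lattice smooth bundle with fiber $Q$ and base $[-1,1]$. Since both $Q$ and the segment $[-1,1]$ are monotone, Theorem~\ref{monbundle} gives that $P$ is monotone, and then Conjecture~\ref{ec} applied to $P$ yields that $P$ satisfies the weak Ewald condition. This establishes condition (2) of Corollary~\ref{cor:pmsym}, so the equivalence with (1) forces $Q$ to be neat.

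For the second bullet the argument is parallel. Let $Q$ be a lattice smooth polytope with $0 \in \Int(Q)$ (the setting in which Conjecture~\ref{gec} is phrased) and assume that Conjecture~\ref{gec} holds. Once again I verify condition (2) of Corollary~\ref{cor:pmsym}: for any lattice smooth bundle $P$ with fiber $Q$ and base $[-1,1]$, the bundle definition over a monotone base forces the fiber over $0 \in [-1,1]$ to be unimodularly equivalent to $Q$, so $(0,0) \in \Int(P)$. Hence Conjecture~\ref{gec} applies to the lattice smooth polytope $P$, giving weak Ewald for $P$. Corollary~\ref{cor:pmsym} then concludes that $Q$ is neat.

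No real obstacle is foreseen: the substance sits entirely in Corollary~\ref{cor:pmsym} and Theorem~\ref{monbundle}, which are already developed. The only item deserving a line of justification rather than a direct citation is the placement $(0,0) \in \Int(P)$ in the second bullet, which follows from the fact that both the base and the fiber in question contain $0$ in their interiors.
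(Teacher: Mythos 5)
Your proof is correct and is exactly the intended argument: the paper states this corollary without proof, as an immediate consequence of the equivalence in Corollary~\ref{cor:pmsym} combined with Theorem~\ref{monbundle}, which is precisely the route you take. Your extra remark that $(0,0)\in\Int(P)$ for the bundles arising in the second bullet (so that Conjecture~\ref{gec} actually applies to $P$) is a worthwhile detail that the paper leaves implicit.
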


The  neat  property is also related to the following famous question of Oda, now considered a conjecture~(\cite{HNPS2008,mfo2007}), and open even in dimension three:

\begin{conjecture}[Oda, related to problems 1, 3, 4, 6 in \cite{Oda1997}]
\label{conj:Oda}
Let $P,Q\subset \R^n$ be lattice polytopes with $Q$ smooth and the normal fan of $Q$ refining that of $P$. Then,
\[
(P+Q) \cap \Z^n = P \cap \Z^n + Q \cap \Z^n,
\label{eq:mixedIDP}
\]
where $A+B :=\{a+b: a\in A, b\in B\}$ denotes the \emph{Minkowski sum} of two sets $A,B\subset \R^n$.
\end{conjecture}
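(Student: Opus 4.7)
Conjecture~\ref{conj:Oda} is Oda's well-known open problem, so the plan I describe should be understood as a natural attack that, to my knowledge, currently gets stuck at the same place as other attempts. The inclusion $P\cap \Z^n + Q\cap \Z^n \subseteq (P+Q)\cap \Z^n$ is immediate, so the whole content is in the reverse inclusion.

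My plan is to \emph{localize at a vertex}. Fix $x\in (P+Q)\cap \Z^n$. Vertices of $P+Q$ are of the form $v_P+v_Q$, with $v_P$ a vertex of $P$, $v_Q$ a vertex of $Q$, and $N_P(v_P)\cap N_Q(v_Q)$ full-dimensional. Because the normal fan of $Q$ refines that of $P$, each vertex $v_Q$ of $Q$ has a \emph{canonical} partner $v_P$, namely the unique vertex of $P$ with $N_Q(v_Q)\subseteq N_P(v_P)$. Pick any direction $u$ pointing generically outward from $x$, let $v_Q$ be the vertex of $Q$ with $u\in N_Q(v_Q)$, and let $v_P$ be its canonical partner. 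Then $x$ lies in the tangent cone of $P+Q$ at $v_P+v_Q$, which decomposes as $T_{v_P}(P)+T_{v_Q}(Q)$.

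Next, I would use smoothness of $Q$. The primitive edge vectors $e_1,\dots,e_n$ of $Q$ at $v_Q$ form a $\Z$-basis of $\Z^n$, and $T_{v_Q}(Q)$ is the nonnegative orthant in these coordinates. Writing
\[
x-v_P-v_Q=\sum_{i=1}^n \lambda_i e_i,\qquad \lambda_i\in\Z_{\ge 0},
\]
the goal becomes to split $\lambda_i=\mu_i+\nu_i$ with $\mu_i,\nu_i\in\Z_{\ge 0}$ so that $p:=v_P+\sum \mu_i e_i\in P$ and $q:=v_Q+\sum \nu_i e_i\in Q$. The set of admissible $(\nu_i)$ is a lattice polytope $Q^\vee\subset \R^n$ (the image of $Q-v_Q$ in the $e_i$-basis), and the set of admissible $(\mu_i)$ is the image $P^\vee$ of $P-v_P$. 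What must be shown is that the affine slice $\{\mu+\nu=\lambda\}\cap(P^\vee\times Q^\vee)$ contains a lattice point.

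The main obstacle is precisely this last step: even though $Q^\vee$ is a ``nice'' lattice polytope (smooth, containing the origin, coordinate axes among its edges), $P^\vee$ is only a lattice polytope in this basis because of the refinement hypothesis, and the combinatorial interplay that forces an integer split has no obvious mechanism. A natural attempt would be to prove it by induction on $\sum \lambda_i$: reduce one $\lambda_i$ by $1$ by removing a unit edge segment from whichever summand can afford it, using smoothness of $Q$ on the $Q$-side and the refinement on the $P$-side to guarantee that at least one side can afford it. This is the crux. I would try to make this argument go through in two favorable settings where the extra structure of the paper applies: first, when $Q$ is deeply smooth in the sense of Section~\ref{sec:deep}, where the unit parallelepiped at $v_Q$ sits inside $Q$ and provides a large supply of legal reductions on the $Q$-side; and second, when $Q$ is neat in the sense of Definition~\ref{def:pm}, since neatness is designed precisely to guarantee the existence of integer points in displaced intersections and would be the right hypothesis to pair with the refinement condition. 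I do not expect either reduction alone to resolve the full conjecture, but they would yield concrete instances (for instance, bundles over a monotone base with a deeply monotone fiber) in which Oda's conjecture provably holds.
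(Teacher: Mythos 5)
There is no proof to compare against here: the statement you were asked to prove is Oda's Conjecture, which the paper records precisely as a \emph{conjecture} (open even in dimension three) and never proves; it only uses it as a hypothesis in Theorem~\ref{thm:oda} to deduce that all smooth lattice polytopes containing the origin are neat. Your write-up is commendably honest that it is a plan of attack rather than a proof, and the reduction you carry out is correct as far as it goes: the nontrivial inclusion is $(P+Q)\cap\Z^n\subseteq P\cap\Z^n+Q\cap\Z^n$; localizing at a vertex $v_P+v_Q$ of $P+Q$ is legitimate (the tangent cone of a Minkowski sum at $v_P+v_Q$ is $T_{v_P}(P)+T_{v_Q}(Q)$, and since $N_Q(v_Q)\subseteq N_P(v_P)$ this cone equals $T_{v_Q}(Q)$, the nonnegative orthant in the basis of primitive edge vectors of $Q$ at $v_Q$); and the problem does become the integer splitting $\lambda=\mu+\nu$ with $\mu\in P^\vee$, $\nu\in Q^\vee$. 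But that splitting step is not a technical remainder --- it \emph{is} the conjecture, and your proposal supplies no mechanism for it. The suggested induction on $\sum\lambda_i$ (``remove a unit edge segment from whichever summand can afford it'') fails at the first serious obstacle: after reducing some $\lambda_i$ by $1$ one must land on a point that is still in $P^\vee+Q^\vee$ with the correct partial splitting available, and nothing in smoothness of $Q$ or the refinement hypothesis guarantees that a legal reduction exists at every intermediate stage; this is exactly the kind of non-local obstruction that has kept the conjecture open.

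Two further cautions. First, invoking neatness as an auxiliary hypothesis is logically backwards relative to the paper: Theorem~\ref{thm:oda} \emph{derives} neatness from Oda's Conjecture, and neatness is essentially the special case of Oda's statement for two normally isomorphic displacements $P_b$, $-P_{-b}$ whose Minkowski sum contains the origin. Assuming it to prove (cases of) Oda would at best recover instances very close to what was assumed. Second, the claim that the deeply smooth condition ``provides a large supply of legal reductions on the $Q$-side'' is not substantiated; the corner parallelepiped at $v_Q$ controls $Q$ near $v_Q$ but says nothing about whether the complementary point $v_P+\sum\mu_ie_i$ lands in $P$. If you want a provable special case in the spirit of the paper, the right target is the one the paper actually needs --- $P$ and $Q$ with the \emph{same} normal fan and the origin in $P+Q$ --- and even that is open; proving it for, say, deeply monotone polytopes would be a genuine contribution, but it is not accomplished here.
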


\begin{theorem}
\label{thm:oda}
If Conjecture~\ref{conj:Oda} holds, then all smooth lattice polytopes containing the origin, in particular all monotone polytopes, are neat.
\end{theorem}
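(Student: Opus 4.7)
The plan is to exhibit $0\in\Z^n$ as the witness, by finding it inside a Minkowski decomposition produced by Oda's conjecture. The key observation that makes everything work is that the Minkowski sum $P_b + P_{-b}$ coincides with $2P$ whenever $P_b$ and $P_{-b}$ are normally isomorphic to $P$. Granted this, Oda's conjecture gives a lattice decomposition of $0\in 2P$, and the balance condition $y+z=0$ automatically places $y$ in $P_b\cap(-P_{-b})\cap\Z^n$.

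First I would verify the Minkowski identity $P_b+P_{-b}=2P$. Since $b,-b$ are in the cone $C_P$ of right-hand sides yielding the normal fan of $P$, both $P_b$ and $P_{-b}$ are smooth (hence in particular lattice, since the smoothness of $P$ forces every vertex-defining submatrix $A_I$ of $A$ to be unimodular, so $A_I^{-1}(c\pm b)_I\in\Z^n$) and share the normal fan of $P$. Comparing support functions at each facet normal $a_i$ (a row of $A$) gives
\[
h_{P_b}(a_i)+h_{P_{-b}}(a_i)=(c_i+b_i)+(c_i-b_i)=2c_i=h_{2P}(a_i).
\]
Because $P_b+P_{-b}$ inherits the normal fan of $P$ as the common refinement and a polytope is determined by its normal fan together with its facet support numbers, this forces $P_b+P_{-b}=2P$.

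Next I would apply Conjecture~\ref{conj:Oda} to the pair $(P_b,P_{-b})$: the polytope $P_{-b}$ is smooth and its normal fan trivially refines that of $P_b$ (they are equal), so
\[
2P\cap\Z^n \;=\; (P_b+P_{-b})\cap\Z^n \;=\; (P_b\cap\Z^n)+(P_{-b}\cap\Z^n).
\]
Since $0\in P\subset\R^n$ is assumed, we have $0\in 2P\cap\Z^n$, so there exist lattice points $y\in P_b$ and $z\in P_{-b}$ with $y+z=0$. Then $-y=z\in P_{-b}$, i.e.\ $y\in-P_{-b}$, which exhibits
\[
y\in P_b\cap(-P_{-b})\cap\Z^n,
\]
establishing neatness of $P$. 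Monotone polytopes satisfy $0\in\Int(P)\subset P$, so the statement for them is a particular case.

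The whole argument is short once the identity $P_b+P_{-b}=2P$ is spotted; I expect the only subtle point to be checking that Oda's conjecture really applies in the form we need (lattice $+$ smooth with refining normal fan), and that $0$ genuinely lies in $2P$, both of which are immediate from the standing assumptions on $P$. No quantitative estimate or face-by-face induction is required.
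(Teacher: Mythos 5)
Your proof is correct, and for the one nontrivial step it takes a genuinely different (and in fact more robust) route than the paper. Both arguments share the same skeleton: observe that normal isomorphism forces $P_b$ and $P_{-b}$ to be smooth lattice polytopes, show that $0\in P_b+P_{-b}$, and then invoke Oda's conjecture for the pair $(P_b,P_{-b})$ (whose normal fans coincide) to split $0$ as $y+z$ with $y\in P_b\cap\Z^n$, $z\in P_{-b}\cap\Z^n$, whence $y\in P_b\cap(-P_{-b})\cap\Z^n$. The difference is in how the origin is placed inside the Minkowski sum. The paper argues by contradiction with a generic separating functional, concluding from $f(v)+f(v')<0$ that one of $f(v),f(v')$ is negative and claiming this contradicts $0\in P_b\cap P_{-b}$; but the displacements $P_b$, $P_{-b}$ need not individually contain the origin (e.g.\ $P=[-1,1]$ with $b=(-2,3)$ gives $P_b=[-4,-1]$ and $P_{-b}=[2,3]$), so that justification is shakier than the support-function identity you use. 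Your computation $h_{P_b}(a_i)+h_{P_{-b}}(a_i)=2c_i$ on the rays of the common normal fan yields the exact equality $P_b+P_{-b}=2P$, from which $0\in 2P$ is immediate because $0\in P$; this is cleaner, proves strictly more, and closes the gap. The only price is invoking the standard facts that the support function of a Minkowski sum is the sum of the support functions and that a polytope with prescribed normal fan is determined by its support values on the rays --- both routine. Everything else in your write-up (lattice-ness of the displacements from unimodularity of the vertex submatrices, the applicability of Oda's conjecture when the two fans are equal, and the final extraction of the witness) matches the paper.
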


\begin{proof}
Let $P_b$ and $P_{-b}$ be displacements normally isomorphic to $P$. This implies that $P_b$ and $P_{-b}$ are still smooth, and smoothness implies them to be lattice polytopes: the system $A_v x = c_v\pm b_v$ defining a vertex $v$  has an integer solution for every integer $b$ since $\det(A_v) =\pm 1$.

We now show that $P_b + P_{-b}$ contains the origin, arguing by contradiction. If this was not the case, let $f$ be a linear functional that is strictly negative on $P_b + P_{-b}$ and assume it generic, so that it is maximized at a single vertex of $P_b + P_{-b}$. This vertex decomposes as $v+v'$ for the vertices $v\in P_b$ and $v'\in P_{-b}$ that maximize $f$ and we have that
\[
f(v) + f(v') = f(v+v') <0.
\]
This implies that one of $f(v)$ and  $f(v')$ is negative, contradicting that $0 \in P_b \cap P_{-b}$.

Now, since $P_b\cap\Z^n + P_{-b}$ contains the origin, Oda's conjecture implies that the origin decomposes as the sum of a lattice point in $P_b$ and another in $P_{-b}$. That is,  there is a lattice point $x\in P_b$ such that $-x\in P_{-b}$.
\end{proof}
That is, in view of our results Oda's Conjecture implies that all monotone polytopes are neat, and so, all monotone bundles of star Ewald polytopes are themselves star Ewald. Observe that in this proof we need only a weak version of Oda's Conjecture: the case where $P$ and $Q$ have the same normal fan.

Corollary~\ref{lastcor} and Theorem~\ref{thm:oda} lead naturally to the following conjecture:

\begin{conjecture}
\label{conj:pm}
All smooth lattice polytopes are neat.
\end{conjecture}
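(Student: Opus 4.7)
This statement is presented as a conjecture and is left open in the paper, so my proposal is necessarily speculative. The natural attack is two-pronged: via a special case of Oda's Conjecture, and via a direct inductive construction.

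For the first prong, Theorem~\ref{thm:oda} already shows that the full Oda Conjecture~\ref{conj:Oda} implies Conjecture~\ref{conj:pm}; in fact only the special case in which the two Minkowski summands are normally isomorphic is needed, since one applies it to $(P_b,P_{-b})$. I would try to settle this special case by exploiting the rigidity of the family: once the common normal fan $\Sigma$ is fixed, the smooth lattice polytopes with normal fan $\Sigma$ are parametrized by facet displacements $b\in\Z^m$, and $P_b+P_{-b}$ again lies in the family. A vertex-by-vertex local argument is natural---each vertex of $P_b+P_{-b}$ is a sum $v_b+v_{-b}$ of vertices of the two summands, and smoothness makes these vertices integer---and the hope is to extend this to all lattice points by walking along the edge skeleton and using unimodularity of each vertex cone to preserve integrality of the decomposition.

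The second prong is a direct inductive construction of the required lattice point $x \in Q_b:=P_b\cap(-P_{-b})$. Under the implicit assumption $0\in P$ (without which the $b=0$ case of neatness already forces $P\cap(-P)\cap\Z^n\ne\varnothing$), the proof of Theorem~\ref{thm:oda} shows that $Q_b$ is nonempty. I would induct on dimension by projecting $Q_b$ along an edge direction of $P$ through the origin: the image should be a set of the same form associated to the facet of $P$ dual to that edge, so induction provides a lattice point, which is then lifted back via the unimodularity of the vertex cone spanned by that edge and the adjacent ones. The base case $n=2$ follows from the two-dimensional results of Section~\ref{sec:Nill}.

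The hard part, in either prong, will be controlling the degeneration of the fine structure under operations that do not a priori preserve it: the Minkowski decomposition in Oda's Conjecture remains unsettled from dimension three onward, and the projection in the inductive attack may take the normal fan of $Q_b$ outside the normally isomorphic regime in which the induction lives. A plausible intermediate milestone, short of the full conjecture, would be to settle it when $P$ is a bundle over a lower-dimensional neat polytope, since there the fiber and base directions decouple in a way compatible with Theorem~\ref{monbundle} and Corollary~\ref{cor:pmsym}. Solving the general case seems to require either a genuinely new combinatorial insight into normally isomorphic smooth polytopes, or a breakthrough on Oda's long-standing problem.
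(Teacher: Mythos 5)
The statement you were given is Conjecture~\ref{conj:pm}, which the paper leaves open: there is no proof of it anywhere in the text, only motivation (Corollary~\ref{lastcor}), a conditional derivation from Oda's Conjecture~\ref{conj:Oda} (Theorem~\ref{thm:oda}), and proofs of special cases. You correctly recognize this, and your framing is consistent with the paper's: the paper itself observes that only the ``equal normal fan'' case of Oda's conjecture is needed, and you correctly flag the implicit hypothesis $0\in P$ (without it the $b=0$ instance of neatness already fails, since $P\cap(-P)\ne\varnothing$ forces $0\in P$ by convexity). So there is no proof in the paper to compare against; what follows are remarks on your plan.

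Three points. First, your proposed intermediate milestone --- settling neatness for bundles over neat bases --- is already in the paper: Theorem~\ref{pmsym2} shows that a lattice smooth bundle with neat base and neat fiber is neat, and Lemma~\ref{simplex-ewald} shows the monotone simplex is neat, so the known neat class is already closed under the bundle operation. Second, your base case is misattributed: the results of Section~\ref{sec:Nill} concern the symmetric set $\mathcal{E}(P)=P\cap(-P)\cap\Z^n$, whereas neatness for $b\ne 0$ asks for a lattice point of $P_b\cap(-P_{-b})$ with $P_b\ne P_{-b}$, which is not an Ewald set; the correct source for $n=2$ is the known two-dimensional case of Oda's conjecture. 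Third, and decisively, both of your prongs funnel into exactly the step that is genuinely open: the set $P_b\cap(-P_{-b})$ is a nonempty rational polytope (by the convexity argument in the proof of Theorem~\ref{thm:oda}), but it can a priori be low-dimensional and thin, and neither edge-walking on $P_b+P_{-b}$ nor projection along an edge direction is known to produce a lattice point in it --- these are the standard attacks on Oda's problem and they do not close even in dimension three. Your proposal is therefore a reasonable and honestly labelled research plan, but it is not a proof, and none should be expected here.
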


\subsection*{The number of Ewald points}

In Section~\ref{sec:number} we study the number of Ewald points that a monotone polytope can have. It is easy to show  (Proposition~\ref{prop:cube}) that for every monotone $n$-polytope 
\[
\mathcal{E}(P) \subset \mathcal{E}([-1,1]^n) =\{-1,0,1\}^n.
\]
Hence, no monotone $n$-polytope can have more than $3^n$ Ewald points. 

Somehow surprisingly, this number of Ewald points of the monotone cube is asymptotically attained (modulo a factor proportional to $\sqrt{n}$) also by the monotone simplex and by any bundle with fiber the monotone simplex and base a segment. 
We call the latter \SSB\ bundles,  short for ``simplex-segment bundles''.  
We describe \SSB\ bundles explicitly in Section~\ref{subsec:ssb} (see Proposition~\ref{prop-SSB}), where we show that they satisfy the three Ewald properties (Proposition~\ref{ssb-ewald}).
We  give a formula their exact number of Ewald points in Section~\ref{subsec:SBB-number} (Theorem~\ref{keyresult} and Proposition~\ref{prop:ssb-number}).

We have computed the number of Ewald points for every monotone polytope up to dimension seven. The full statistics (up to $n=5$) is in  Table~\ref{table:sizes}. The minimum number in each dimension is in Table~\ref{tablemin}.

\begin{table}[htb]
\begin{center}
	\begin{tabular}{|c|c|c|c|c|c|c|c|} \hline
		$n$ & 1 & 2 & 3 & 4 & 5 & 6 & 7 \\ \hline
		 $\min |\mathcal{E}(P)|$ & 3 & 7 & 13 & 27 & 59 & 117 & 243 \\ \hline
	\end{tabular}	
	\end{center}
\medskip
\caption{Minimum number of Ewald points among all monotone $n$-polytopes for $n\le 7$.
}
\label{tablemin}
\end{table}

This minimum number is particularly interesting for two reasons. One the one hand, it seems to grow exponentially  
 which  is evidence in favor of Conjecture~\ref{ec}. On the other hand, using fiber bundles, we can prove that for every $n$ there are monotone $n$-polytopes with approximately $3^{(2n+1)/3}\approx 2.08^{n}$ Ewald points:
 
 \begin{theorem}[Corollary \ref{cor:minimum}] 
 	For each  $n\in \Z_{\ge 1}$ let $\mathrm{E}_{\min}(n)$ denote the minimum number of Ewald points among monotone $n$-polytopes. Then:
\begin{equation*}
\label{eq:min}
\mathrm{E}_{\min}(n) \le 
\begin{cases}
3\cdot 9^k, & \text{if $n=3k+1$,}\\
\frac{59}9\cdot 9^k, & \text{if $n=3k+2$ and $k \ne 0$ (i.e., $n\ne 2$),}\\
13\cdot 9^k, & \text{if $n =3k+3$.}\\
\end{cases}
\end{equation*}
Hence,	
\[
	\lim_{n\to\infty} \sqrt[n]{\mathrm{E}_{\min}(n)} \le \sqrt[3]{9} = 2.08.
\]

 \end{theorem}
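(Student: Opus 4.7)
The plan is to construct explicit monotone $n$-polytopes meeting the stated bounds, by starting from low-dimensional base cases and iterating a fiber-bundle construction that adds three dimensions while multiplying the Ewald count by exactly $9$ at each step.

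First, I pin down the three base cases $n=1,3,5$, one per residue class modulo $3$. The monotone segment $[-1,1]$ has three Ewald points; for $n=3$ and $n=5$, using the explicit description of \SSB\ bundles in Section~\ref{subsec:ssb} together with the counting formula of Theorem~\ref{keyresult} and Proposition~\ref{prop:ssb-number}, one exhibits monotone polytopes with exactly $13$ and $59$ Ewald points respectively, matching the minima reported in Table~\ref{tablemin}.

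The core of the argument is the inductive step: given a monotone $n$-polytope $P$ with $e = |\mathcal{E}(P)|$, produce a monotone $(n+3)$-polytope $P'$ with $|\mathcal{E}(P')| = 9e$. I realize $P'$ as a fiber bundle in the sense of Definition~\ref{defbundle}, with base $P$ and a carefully chosen $3$-dimensional monotone fiber. Monotonicity is automatic by Theorem~\ref{monbundle}. To count Ewald points I use the fact that every $p \in \mathcal{E}(P')$ projects to a point of $\mathcal{E}(P)$; the bundle twist, readable from the canonical coordinates of Proposition~\ref{prop:bundle}, is designed so that for each $x \in \mathcal{E}(P)$ the intersection of the fiber above $x$ with the reflected fiber above $-x$ contains exactly $9$ lattice points, independently of $x$. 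A direct product construction would instead give at least $13e$ Ewald points, since every monotone $3$-polytope has at least $13$ of them, so the reduction to $9e$ is precisely what the non-trivial twist is engineered to achieve.

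Iterating this step from each base case yields the three claimed upper bounds. Taking $n$-th roots in the case $n = 3k+1$ gives
\[
\bigl(\mathrm{E}_{\min}(n)\bigr)^{1/n} \;\le\; \bigl(3\cdot 9^{k}\bigr)^{1/(3k+1)} \;\longrightarrow\; 9^{1/3} = \sqrt[3]{9}
\]
as $k \to \infty$, and the same limit is obtained from the formulas in the other two residue classes. The main technical obstacle is the inductive step: exhibiting a twist that trims the Ewald count from at least $13e$ down to exactly $9e$ without destroying monotonicity of the total space. This requires a careful analysis, via Proposition~\ref{prop:bundle}, of how the displacements of the fiber depend on the base point and how they transform under $x \mapsto -x$.
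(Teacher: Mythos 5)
Your overall architecture --- iterate a fiber-bundle step that adds three dimensions and multiplies the Ewald count by $9$, then read off the asymptotics --- is the same as the paper's, but the mechanism you propose for the inductive step is not just unproved, it is impossible as stated. You claim a twist can be chosen so that for \emph{every} $x\in\mathcal{E}(P)$ the fiber over $x$ intersected with the reflection of the fiber over $-x$ contains exactly $9$ lattice points. Take $x=0$: for a monotone bundle the fiber over the origin is (up to the identification of Remark~\ref{rem:zero-in-B}) the $3$-dimensional monotone fiber $Q$ itself, its ``reflected fiber over $-0$'' is $-Q$, and their intersection with $\Z^3$ is $\mathcal{E}(Q)$, which by Proposition~\ref{prop:bundle-fiber} sits inside $\mathcal{E}(P')$ and by Table~\ref{tablemin} has at least $13$ elements. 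So no choice of twist makes the per-fiber count uniformly equal to $9$, and your induction does not close.

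What the paper actually does is get $9$ \emph{on average} rather than fiberwise. It fixes a facet $F$ of the base $B$ with inequality $u\cdot x\le 1$ and takes the ``small fiber bundle'' with fiber $\Delta_n$ twisted by $\phi(x)=-n\,u\cdot x$. Then Ewald points of $B$ on the two levels $u\cdot x=\pm 1$ have fibers that are unimodular simplices contributing $n+1=4$ points each (for $n=3$), while Ewald points on the level $u\cdot x=0$ have fiber $\Delta_3$ contributing $|\mathcal{E}(\Delta_3)|=19$ points each. The weighted average equals $9$ exactly when $|\mathcal{E}_+(B,F)|=|\mathcal{E}_0(B,F)|$, i.e.\ when the number of Ewald points of $B$ on $F$ equals the number on the middle level; the key lemma shows this invariant is \emph{preserved} by the $\Delta_3$ step (and holds for the segment $[-1,1]$, the starting point of the whole iteration), which is what lets one iterate. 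The dimensions $n\equiv 0,2 \pmod 3$ are then reached by appending one or two small $\Delta_2$-bundles at the end, giving the factors $13/3$ and $59/3$, rather than by separate base cases in dimensions $3$ and $5$ as you propose. This balancing invariant, absent from your write-up, is the actual content of the proof.
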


\subsection*{Nill's conjecture}
In Section~\ref{secgen} we study Nill's Conjecture~\ref{gec}. We prove a strong form of it in dimension 2, in which 
$P$ is allowed to be \emph{quasi-smooth} instead of smooth (Corollary~\ref{coro:dim2}). By quasi-smooth we mean that each vertex is at lattice distance one from the line spanned by its two neighboring boundary lattice points.

It seems quite challenging to make the type of arguments we use to work in dimensions $3$ or higher, but we provide two partial results:
We prove Conjecture~\ref{gec} for $n=3$ in the case where the origin lies in the first displacement of some edge (Proposition~\ref{prop:Nill-dim3}), and for all deeply smooth lattice polytopes in arbitrary dimension in the case where the origin equals the first displacement of some vertex (Proposition~\ref{prop:Nill-higherdim}).

\subsection*{Connection with symplectic/algebraic geometry}

Section~\ref{sec:toric} finishes the paper reviewing the relation between the Ewald conditions and symplectic geometry, and for those without
background in symplectic geometry we recommend reading that section before proceeding with this one.

 Via the Delzant correspondence $M\mapsto \mu(M)$ that sends a manifold $M$ to its momentum polytope $\mu(M)$,
 monotone polytopes correspond bijectively to the so called \emph{monotone symplectic toric manifolds}. McDuff's Corollary~\ref{cor:McDuff} implies that if $\mu(M)$ is star Ewald for a monotone manifold $M$ then the only non-displaceable toric orbit in $M$ is the central one. That is, the central orbit is a \emph{stem}.

With this in mind, the results in this paper imply for example that:

\begin{theorem}
\label{thm:stem}
Let $M$ be a monotone symplectic toric manifold with momentum polytope $P$. In the following cases the central orbit is a stem:
\begin{enumerate}
\item If $P$ is deeply monotone.
\item If $P$ is a bundle in which both the base and the fiber are star Ewald and the fiber is neat.
\end{enumerate}
\end{theorem}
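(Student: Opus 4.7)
The plan is to reduce both statements to McDuff's Corollary~\ref{cor:McDuff}, which guarantees that the central fiber of a monotone symplectic toric manifold $M$ is a stem whenever its momentum polytope $\mu(M) = P$ satisfies the star Ewald condition. So the whole task is to verify that, under hypotheses (1) and (2) respectively, $P$ is star Ewald.

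For part~(1), the argument is immediate: if $P$ is deeply monotone, then Theorem~\ref{thm:deep-ewald} asserts that $P$ satisfies the strong and star Ewald conditions, so McDuff's corollary applies verbatim.

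For part~(2), I would invoke the characterization of bundles with the star Ewald property given by Corollary~\ref{cor:pmsym} (equivalence (1)$\Leftrightarrow$(3)): if the fiber $Q$ of $P$ is a lattice smooth polytope that is both \emph{neat} and star Ewald, then for any base $B$ that itself satisfies the star Ewald condition, the total space of the bundle satisfies the star Ewald condition. Since we are told that the fiber is neat and star Ewald and that the base is star Ewald, this applies directly to $P$, and McDuff's corollary again finishes the argument.

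I do not expect a genuine obstacle: the theorem is a packaging of three earlier results (Theorem~\ref{thm:deep-ewald}, Corollary~\ref{cor:pmsym}, and Corollary~\ref{cor:McDuff}). The only small bookkeeping point to check is that in part~(2), the hypothesis of Corollary~\ref{cor:pmsym}(3) requires the base $B$ to satisfy the star Ewald property in order to transport it to the bundle; this is precisely what the statement assumes, so no extra work is needed. The proof can thus be written in just a few lines.
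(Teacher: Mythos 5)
Your proposal is correct and matches the paper's own proof, which likewise derives part~(1) from Theorem~\ref{thm:deep-ewald}, part~(2) from Corollary~\ref{cor:pmsym}, and then applies Corollary~\ref{cor:McDuff} in both cases. Nothing further is needed.
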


\begin{proof}
Parts (i) and (ii) follow from Theorem~\ref{thm:deep-ewald} and Corollary~\ref{cor:pmsym}, taking Corollary~\ref{cor:McDuff}  into account.
\end{proof}

Finally, we would like to thank Jo\'e Brendel for pointing out to us his paper~\cite{Brendel} and the following consequences of our results.
Brendel says that a monotone polytope $P$ \emph{satisfies the FS property} if
$\mathcal{E}(P) \cap F \ne \emptyset$ for every facet $F$ of $P$. 

\begin{lemma}
\label{lemma:Brendel}
The weak Ewald property for a monotone polytope implies the FS property: $\mathcal{E}(P) \cap F \ne \emptyset$ for every facet $F$ of $P$.
\end{lemma}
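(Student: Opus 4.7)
The plan is to use directly the fact that the Ewald set $\mathcal{E}(P)=\Z^n\cap P\cap -P$ is symmetric under negation: if $x\in\mathcal{E}(P)$ then $-x\in\mathcal{E}(P)$. Combined with the fact that $P$ is reflexive, this forces every facet to absorb at least one element of a unimodular basis of Ewald points (up to sign).

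More concretely, let $F$ be a facet of $P$ with primitive (integer) outer normal $u_F\in\Z^n$, so that $F=\{x\in P : \langle u_F,x\rangle = 1\}$ and $\langle u_F,x\rangle \le 1$ for every $x\in P$. By the assumed weak Ewald property, pick a unimodular $\Z$-basis $v_1,\dots,v_n$ of $\Z^n$ contained in $\mathcal{E}(P)$. For each index $i$, both $v_i$ and $-v_i$ lie in $P$, so $\langle u_F,v_i\rangle\in\Z\cap[-1,1]=\{-1,0,1\}$.

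The key point is that not all of these pairings can vanish. Indeed, since $v_1,\dots,v_n$ is a basis of $\R^n$, if $\langle u_F,v_i\rangle = 0$ for every $i$ then $u_F=0$, contradicting that $u_F$ is a primitive facet normal. Hence there exists $i$ with $\langle u_F,v_i\rangle=\varepsilon\in\{-1,+1\}$. The point $\varepsilon v_i$ then satisfies $\langle u_F,\varepsilon v_i\rangle = 1$, so $\varepsilon v_i\in F$, and it lies in $\mathcal{E}(P)$ because $\mathcal{E}(P)$ is stable under negation. Therefore $\mathcal{E}(P)\cap F\neq\varnothing$, as required.

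There is no real obstacle here; the only things to verify carefully are the symmetry of $\mathcal{E}(P)$ (immediate from $\mathcal{E}(P)=\Z^n\cap P\cap -P$) and the normalization $\langle u_F,x\rangle\le 1$ for the primitive facet normals, which is the definition of reflexivity used for monotone polytopes throughout the paper. One could alternatively phrase the argument by invoking Proposition~\ref{prop:cube}, which gives $\mathcal{E}(P)\subset\{-1,0,1\}^n$ in suitable coordinates, but the direct pairing argument above is shorter and dispenses with a change of coordinates.
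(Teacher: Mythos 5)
Your proof is correct and is essentially the paper's argument in contrapositive-free form: the paper supposes $\mathcal{E}(P)\cap F=\emptyset$, deduces from symmetry and reflexivity that $\mathcal{E}(P)\subset\{u_F\cdot x=0\}$, and contradicts the weak Ewald property, while you directly locate a basis vector $v_i$ with $u_F\cdot v_i=\pm1$ and flip its sign into $F$. Both hinge on the same two facts ($u_F\cdot x\in\{-1,0,1\}$ for Ewald points, and a basis cannot be annihilated by the nonzero functional $u_F$), so no substantive difference.
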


\begin{proof}
Let $F$ be defined by the inequality $u\cdot x \le 1$ and suppose that $\mathcal{E}(P) \cap F = \emptyset$. Then,
$\mathcal{E}(P)\subset \{u\cdot x \le 0\}$, which implies $\mathcal{E}(P)\subset \{u\cdot x = 0\}$.
\end{proof}

We refer to the aforementioned paper by Brendel and the references therein for more details on the concepts (eg. Chekanov torus) which appear in the following result (see also Pelayo~\cite{PES} and Schlenk~\cite{SCH} for surveys on various aspects of symplectic geometry and topology).

\begin{theorem}\label{thm:symplectic}
	Let $(M,\omega)$ be a compact connected monotone symplectic toric manifold. Let us
	assume that $\mu(M)$ satisfies the weak Ewald property (e.g.,  it is deeply monotone). Then the following statements hold:
	\begin{itemize}
		\item[(1)]
		If the central fiber (that is, the fiber over the unique integral interior point of $\mu(M)$) is real, then $\mu(M)$ is centrally symmetric, that is, $\mu(M) =-\mu(M)$.
		\item[(2)]
		The Chekanov torus can be embedded into $M$ to yield an exotic Lagrangian which is not real.
	\end{itemize}
\end{theorem}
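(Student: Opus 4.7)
The plan is to reduce both statements to the corresponding results in Brendel's paper \cite{Brendel} via our Lemma~\ref{lemma:Brendel}. The only new input from the present paper is the implication ``weak Ewald $\Rightarrow$ FS'' contained in that lemma; once this bridge is crossed, both assertions should follow from Brendel's work, which is already phrased in terms of the FS property rather than the Ewald property.

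Concretely, I would first invoke Lemma~\ref{lemma:Brendel}: the assumption that $\mu(M)$ satisfies the weak Ewald property implies that every facet of $\mu(M)$ contains an Ewald point, i.e., $\mu(M)$ satisfies the FS property. The parenthetical clause ``e.g., it is deeply monotone'' is justified by Theorem~\ref{thm:deep-ewald}, which shows that deeply monotone polytopes satisfy the weak (in fact, strong and star) Ewald condition, so that Lemma~\ref{lemma:Brendel} applies in this sub-case as well.

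I would then apply Brendel's two theorems from \cite{Brendel}, which take the FS property as input: first, that for a compact connected monotone symplectic toric manifold whose central fiber is real, the FS property forces $\mu(M)$ to be centrally symmetric; and second, that whenever $\mu(M)$ satisfies the FS property, the Chekanov torus admits a monotone Lagrangian embedding into $M$ yielding an exotic, non-real Lagrangian. Plugging our FS conclusion into these respectively yields parts~(1) and~(2).

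The main obstacle is located entirely inside \cite{Brendel}, where the symplectic-topological content is developed (displaceability, Lagrangian isotopy classes, real structures on monotone toric manifolds, the Chekanov construction). In the present paper our role is merely to supply the combinatorial bridge from the weak Ewald property to the FS property, so the proof itself reduces essentially to citing \cite{Brendel} once Lemma~\ref{lemma:Brendel} is in place.
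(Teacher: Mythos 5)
Your proposal matches the paper's proof exactly: the paper also deduces the FS property from the weak Ewald assumption via Lemma~\ref{lemma:Brendel} and then cites Brendel's Theorems 1.2 and 1.4, which take the FS property as hypothesis. The justification of the parenthetical ``deeply monotone'' case via Theorem~\ref{thm:deep-ewald} is likewise consistent with the paper.
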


\begin{proof}
	Both results are proved by Brendel~\cite[Theorems 1.2 and 1.4]{Brendel} under the assumption that  $\mu(M)$ satisfies the FS property.
\end{proof}

In connection with this result, it is interesting to observe that centrally symmetric monotone polytopes are completely classified. It is proved in \cite{VosKly} that every centrally symmetric monotone polytope decomposes as a Cartesian product of \emph{del Pezzo polytopes} $DP_n$, where the $n$-dimensional del Pezzo polytope is the intersection of the  monotone $n$-simplex and its opposite. In other words, 
\[
DP_n:= \{(x_1,\dots,x_n)\in \R^n \,|\, |x_i|\le 1\, \forall i \text{ and } \left|\sum_i x_i\right| \le 1\}.
\] 
Del Pezzo polytopes are monotone if and only if $n=1$ or $n$ is even, and a Cartesian product is monotone if and only if its factors are monotone.

\section{Preliminaries on monotone polytopes} \label{ws}

Let $V$ and $V'$ by finite dimensional real vector spaces endowed with respective lattices $\Lambda$ and $\Lambda'$.
Given two polytopes $P\subset V$ and $P'\subset V'$ we say that $P$ and $P'$ are \emph{unimodularly equivalent} if there are  lattice subspaces $W\subset V$ and $W'\subset V'$ respectively containing $P$ and $P'$ and an affine isomorphism $f:W \stackrel{\cong}{\to} W'$ satisfying
\[
f(P) = P'
\quad{\text{and}}\quad
f(W\cap \Lambda) = W'\cap \Lambda'.
\]
Here a linear subspace $W$ is called a lattice subspace if it is linearly spanned by $W\cap \Lambda$.

All properties and results in this paper are invariant under unimodular equivalence. In practice, this implies that there is no loss of generality in assuming that $V=\R^n$,  $\Lambda=\Z^n$, and that $P$ is full-dimensional. We will typically make these assumptions in our definitions, statements and proofs. Observe that in this situation the map $f$ belongs to $\AGL(n,\Z)$.
That is, two polytopes $P,P'\subset \R^n$ are  equivalent
if and only if there is an $n$-dimensional integer matrix $A$ with determinant $\pm 1$ and a $t\in \Z^n$ 
such that $P$ is sent to $P'$ by the mapping $x\mapsto Ax+t$. For this reason unimodular equivalence is sometimes called $\AGL(n,\Z)$-equivalence.

A second form of equivalence that we use is that two polytopes $P,Q\subset V$ in the same ambient space are called \emph{normally isomorphic} if they have the same normal fan.

\subsection{Monotone, i.e. smooth reflexive, polytopes}

We here recall the notions of smooth, reflexive and monotone polytope. 
A comprehensive source on the topic is~\cite{HNP-book}.

\begin{definition}[Smooth polytope]
	An $n$-dimensional polytope in $\mathbb{R}^n$ is \emph{smooth} if it satisfies the following three properties:
	\begin{itemize}
		\item
		it is \emph{simple}: there are precisely $n$ edges meeting a each vertex;
		\item
		it is \emph{rational}: it has rational edge directions (equivalently, rational facet normals);
		\item
		the primitive edge-direction vectors at each vertex form a basis of the lattice  $\Z^n$. 
	\end{itemize}
	Equivalently, a smooth  polytope is a polytope whose normal fan is simplicial, rational, and unimodular.
\end{definition}

\begin{remark}
	In the algebraic geometry literature smooth polytopes are typically required to have integer vertices. We do not assume that here, and write ``lattice smooth polytope'' when integer vertices are assumed. In the symplectic geometry literature (perhaps-non-lattice) smooth polytopes are called  \emph{Delzant polytopes}.
\end{remark}

Let $P$ be an $n$-dimensional rational polytope and let $F$ be a facet of $P$. We write $u_F$ for the primitive exterior normal vector to $F$. With this notation in mind, there are
constants $b_F \in \R$ such that the irredundant inequality description of $P$ is
\[
P=\{ x \in \mathbb{R}^n \,|\,  u_F \cdot x   \leq b_F, \,\, \text{where $F$ is a facet of $P$} \}.
\]
For the following definition recall that a \emph{lattice polytope} is a polytope with vertices in the lattice
and that every polytope $P$ with the origin in its interior has a \emph{dual}, defined as:
\[
P^\vee:=\conv \left\{\frac{u_F}{b_F}\,|\, u_F \cdot x   \leq b_F \,\, \text{defines a facet of $P$} \right\}.
\]
(Observe that the origin being in the interior implies $b_F>0$ for every facet $F$).

\begin{definition}[Reflexive polytope]
	A \emph{reflexive polytope} is a lattice polytope with the origin in its interior and whose dual polytope is also a lattice polytope. Equivalently, a lattice polytope is
	reflexive if and only if every facet-defining inequality is of the form $u_F \cdot x \le 1$, where $u_F$ is the primitive exterior normal vector to the facet.
\end{definition}

If $P$ is a reflexive polytope then $P$ has the origin as its unique interior lattice point. Hence, for reflexive polytopes $\AGL(n,\Z)$-equivalence is the same as $\GL(n,\Z)$-equivalence.

Equivalently, a reflexive polytope is a lattice polytope containing the origin and whose facets are all at distance one from the origin, according to the following definition.

\begin{definition}\label{def:distance}
If $H=\{ x \in \R^n \,|\,  u \cdot x   = b\}$ (with $u\in \Z^n$ and primitive) is a hyperplane with rational direction, we call \emph{(lattice) distance} form a point $x_0$ to $H$ the number $|b-u\cdot x_0|$.

If $F$ is a polytope of codimension one (e.g., a facet of a full-dimensional polytope) we call distance to $F$ the distance to the hyperplane $\aff(F)$.
\end{definition}

\begin{definition}[Monotone polytope]\label{defmon}
	A polytope is  \emph{monotone} if it is smooth and reflexive. 
\end{definition}
	
	Modulo unimodular equivalence, the number of reflexive polytopes, and hence the number of monotone polytopes, is finite in each dimension~\cite{LagariasZiegler}.
	Table \ref{tablemon} shows the number of monotone polytopes up to dimension 9, and Figure \ref{fig-mon2} pictures the five that exist in dimension 2.  
	The enumeration for $n \leq 8$ is due to
	{\O}bro~\cite{OebroSmoothFano} and for $n=9$ is due to Lorenz and Paffenholz~\cite{LorenzPaffenholz}.

Monotone $n$-polytopes are known to have at most $3n$ facets~\cite{Casagrande} and conjectured to have at most $6^{n/2}$ vertices~\cite[Conj.~7.23]{HNP-book}. Both bounds are attained by  Cartesian products of monotone hexagons.

\begin{table}[htb]
\begin{center}
	\begin{tabular}{|r|c|c|c|c|c|c|c|c|c|} \hline
		dimension & 1 & 2 & 3 & 4 & 5 & 6 & 7 & 8 & 9 \\ \hline
		 monotone polytopes & 1 & 5 & 18 & 124 & 866 & 7622 & 72256 & 749892 & 8229721 \\ \hline
	\end{tabular}	
	\end{center}
\medskip
\caption{Number of  monotone polytopes in each dimension up to $9$.
}
\label{tablemon}
\end{table}

The simplest example of a monotone polytope is \textit{the} monotone simplex.

\begin{definition}[Smooth  simplex, monotone simplex]
\label{def:simplex}
	The \emph{smooth unimodular simplex} or \emph{standard simplex} is the polytope
	$\delta_n:=\Big \{x\in \R^n: x_i\geq 0 \,\, \forall i ,\ \sum_{i=1}^n x_i \leq 1\Big\}$.
	Every smooth lattice simplex is unimodularly equivalent to an integer dilation of it, that is, to the following \emph{smooth simplex of size $k$}:
	\[
	k\delta_n:=\Big \{x\in \R^n: x_i\geq 0 \,\, \forall i ,\ \sum_{i=1}^n x_i \leq k\Big\},
	\]
	
	The only one that is monotone is (a translation of) the smooth simplex of size $n+1$:
	\[
	\Delta_n:=\Big\{x\in \R^n : x_i\geq -1 \ \forall i ,\ \sum_{i=1}^n x_i \leq 1\Big\} = -\one + 
	(n+1) \delta_n,
	\]
	We call it  the \emph{monotone simplex}.
\end{definition}

\subsection{Three Ewald conditions and their relation to toric symplectic geometry}

In order to define precisely McDuff's star Ewald condition we introduce the following notation:
Let $P$ be any polytope 
and let $\mathcal F$ and $\mathcal R$ be the sets of facets and \emph{ridges} (that is, faces of codimension two) of $P$. 
For a face $f$ of $P$ we denote:
\[
\Star(f)=\bigcup_{f\subset F \in \mathcal F} F ;\quad
\starop(f)=\bigcup_{f\subset R \in \mathcal R} R ;\quad
\Star^*(f)=\Star(f)\setminus\starop(f).
\]

	For example, for any facet $F$  we have  $$\Star(F)=\Star^*(F)=F.$$

\renewcommand{\theenumi}{\roman{enumi}}
\begin{definition}[Ewald conditions, McDuff {\cite[Definition 3.5]{McDuff-probes}}]
\label{ewaldcond}
	Let $P$ be an $n$-dimensional  polytope  with the origin in its interior. We say that:
	\begin{enumerate}
		\item $P$ satisfies the \textit{weak Ewald condition} if $\mathcal{E}(P)$ contains a unimodular basis of $\Z^n$.
		\item $P$ satisfies the \textit{strong Ewald condition} if for each facet $F$ of $P$ the set $\mathcal{E}(P)\cap F$ contains a unimodular basis of $\Z^n$.
		\item A face $f$ of $P$ satisfies the \textit{star Ewald condition} or \emph{is star Ewald} if there exists $\lambda\in \mathcal{E}(P)$ such that $\lambda\in\Star^*(f)$ and $-\lambda\not\in \Star(f).$
		\item $P$ satisfies the \emph{star Ewald condition} if every face of $P$ satisfies it.
	\end{enumerate}
\end{definition}

The rest of this section sketches the proof of the following:

\begin{theorem}[McDuff\cite{McDuff-probes}]
\label{thm:po}
Either  of the strong Ewald  or the star Ewald conditions for a monotone polytope imply the weak Ewald conditon.
However, the strong Ewald condition does not imply the star Ewald condition.
\end{theorem}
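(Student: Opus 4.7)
The theorem comprises two positive implications and one non-implication; I plan to prove them separately.

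The implication strong $\Rightarrow$ weak is immediate from the definitions: the strong Ewald condition demands, for every facet $F$ of $P$, that $\mathcal{E}(P)\cap F$ contain a unimodular basis of $\Z^n$, so in particular $\mathcal{E}(P)$ itself contains one (take any facet).

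For star $\Rightarrow$ weak my plan is to fix a vertex $v$ of $P$ and produce a unimodular basis of Ewald points from the star Ewald conditions at faces through $v$, exploiting smoothness at $v$. Monotonicity forces the facets $F_1,\dots,F_n$ through $v$ to have the form $\{u_j\cdot x\leq 1\}$ with primitive exterior normals $u_1,\dots,u_n$ forming a unimodular basis of $\Z^n$; let $w_1,\dots,w_n$ be the dual basis. For the edge $e_i=\bigcap_{j\neq i}F_j$ the star Ewald condition yields $\lambda_i\in\mathcal{E}(P)$ lying in exactly one facet $F_{\sigma(i)}$ containing $e_i$ (so $\sigma(i)\neq i$) and in no ridge through $e_i$, while $-\lambda_i$ lies in no facet through $e_i$. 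Combining these memberships with $\lambda_i,-\lambda_i\in P$ and integrality of $u_j\cdot\lambda_i$ forces $u_j\cdot\lambda_i=0$ for $j\notin\{i,\sigma(i)\}$, $u_{\sigma(i)}\cdot\lambda_i=1$, and $u_i\cdot\lambda_i\in\{-1,0,1\}$; equivalently, $\lambda_i=(u_i\cdot\lambda_i)\,w_i+w_{\sigma(i)}$.

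The hard part is then extracting a genuine unimodular basis from such Ewald points: the matrix whose rows are the $\lambda_i$ in the dual basis $\{w_j\}$ is a permutation-type matrix plus diagonal perturbations, and its determinant need not equal $\pm 1$ a priori. To deal with this I would additionally invoke star Ewald at the vertex $v$ itself, whose analysis (identical but with $\Star(v)=\bigcup_j F_j$) forces the resulting Ewald point to be exactly some $w_{i_0}$, and star Ewald at the intermediate faces through $v$; the full family of Ewald points so obtained can then be manipulated by $\Z$-linear combinations to isolate each $w_j\in\mathcal{E}(P)$. A possibly cleaner alternative is induction on $\dim P$ through first displacements of facets, using that a facet of a monotone polytope admits a first displacement that is itself monotone (Lemma~\ref{hyp-monotone}) and inherits the star property from $P$.

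For strong $\not\Rightarrow$ star it suffices to exhibit a concrete polytope, and I would simply invoke Proposition~\ref{paff} (due to Paffenholz), a $6$-dimensional monotone polytope satisfying the strong Ewald condition but with a face failing to be star Ewald.
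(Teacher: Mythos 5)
Your handling of the two easy claims coincides with the paper's: strong $\Rightarrow$ weak is immediate from the definitions, and the non-implication is exactly the content of Proposition~\ref{paff} (whose proof combines Paffenholz's explicit $6$-dimensional example with Theorem~\ref{Oebro}, which supplies the strong condition for free in dimension $6$). For star $\Rightarrow$ weak the paper gives no argument of its own; it cites McDuff's Lemma~3.7 of \cite{McDuff-probes}. You instead attempt to reprove that lemma, and this is where your proposal has a genuine gap.

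Your local analysis is correct as far as it goes: star Ewald at the edge $e_i$ does force $\lambda_i=(u_i\cdot\lambda_i)\,w_i+w_{\sigma(i)}$ with $\sigma(i)\neq i$ and $u_i\cdot\lambda_i\in\{-1,0,1\}$, and star Ewald at $v$ does produce some $w_{i_0}\in\mathcal{E}(P)$. But the step you defer --- ``manipulate by $\Z$-linear combinations to isolate each $w_j\in\mathcal{E}(P)$'' --- cannot work as stated: $\mathcal{E}(P)$ is not closed under $\Z$-linear combinations, and the weak Ewald condition does not ask that the particular dual basis $w_1,\dots,w_n$ lie in $\mathcal{E}(P)$ (in general it will not); it asks for \emph{some} unimodular basis inside $\mathcal{E}(P)$. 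The argument that closes this gap (McDuff's) applies the star condition to an adaptively ordered complete flag of faces through $v$ rather than only to $v$ and its edges: star Ewald at $v$ gives $\lambda_1$ with $u_j\cdot\lambda_1=\delta_{j,1}$ for all $j$ after relabelling the facets; star Ewald at the edge $\bigcap_{j\geq 2}F_j$ gives $\lambda_2$ with $u_j\cdot\lambda_2=\delta_{j,2}$ for all $j\geq 2$ after relabelling within $\{2,\dots,n\}$; iterating along the flag, the matrix $(u_j\cdot\lambda_k)_{j,k}$ is triangular with unit diagonal, so $\lambda_1,\dots,\lambda_n$ is itself the required unimodular basis and no linear algebra on Ewald points is needed. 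Your alternative route via first displacements is also not viable without new lemmas: nothing in the paper asserts that the first displacement of a facet inherits the star Ewald property, and Lemma~\ref{hyp-monotone} only guarantees that the displacement is normally isomorphic to the facet under a \UT-freeness hypothesis that an arbitrary monotone polytope need not satisfy.
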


That the strong condition implies the weak one is obvious. For the star Ewald condition, 
McDuff in~\cite[Lemma 3.7]{McDuff-probes} shows the following more precise statement:
if a monotone polytope $P$ has a vertex $v$ such that every face of $P$ containing $v$ is star Ewald, then $P$ satisfies the weak Ewald condition. 
McDuff also proves  that if $\mathcal{E}(P)\cap F$
contains a unimodular basis for a facet $F$, then every codimension $2$ face of $P$ contained in $F$ satisfies the star Ewald condition \cite[Lemma 3.8]{McDuff-probes}.

The relation between the strong Ewald and the star Ewald conditions is not completely clear, as  pointed out in  \cite[page 14]{McDuff-probes}, but the following result of \O{}bro and example of Paffenholz show that the star condition is not implied by the strong one:

\begin{theorem}[\O{}bro {\cite[page 67]{Oebro-tesis}}]\label{Oebro}
	Every monotone polytope of dimension  $7$ or less satisfies the strong Ewald condition.
\end{theorem}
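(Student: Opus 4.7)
The plan is to give a purely computational verification, following the strategy already used by \O{}bro. The starting point is the classification of all monotone polytopes up to dimension $7$, which is itself a computer enumeration carried out by \O{}bro and Lorenz--Paffenholz and recorded in Table~\ref{tablemon}: there are $1+5+18+124+866+7622+72256$ of them in dimensions $1$ through $7$. Since each property of a polytope under consideration (smoothness, reflexivity, the Ewald set, the facet structure) is a unimodular invariant, it suffices to run the check once on each representative in the classification.

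For a fixed monotone $n$-polytope $P$ in this list, I would first compute $\mathcal{E}(P)$. By Proposition~\ref{prop:cube} we have $\mathcal{E}(P)\subset\{-1,0,1\}^n$, so I only need to test each of the at most $3^n\le 2187$ ternary vectors against the facet inequalities $u_F\cdot x\le 1$ (of which there are at most $3n=21$ by Casagrande's bound). This is cheap and produces the finite set $\mathcal{E}(P)$ together with, for each facet $F$, the subset $\mathcal{E}(P)\cap F$ of Ewald points saturating the inequality $u_F\cdot x=1$.

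Next, for each facet $F$, I need to certify that $\mathcal{E}(P)\cap F$ contains a unimodular basis of $\Z^n$. Equivalently (using that $F$ itself is a smooth lattice $(n-1)$-polytope, so its affine hull is a lattice hyperplane and $\mathcal{E}(P)\cap F$ always lies in a translate of a rank-$(n-1)$ sublattice together with vectors off that translate), it suffices to show that $\mathcal{E}(P)\cap F$ generates $\Z^n$ as a group; this is decided by a Smith normal form / Hermite normal form computation on the integer matrix whose rows are the elements of $\mathcal{E}(P)\cap F$, checking that its invariant factors are all $1$. Alternatively one can iterate over $n$-tuples of points in $\mathcal{E}(P)\cap F$ and look for one whose determinant is $\pm 1$; in dimension $\le 7$, with $|\mathcal{E}(P)\cap F|\le 3^{n-1}\le 729$, this remains feasible.

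The only real obstacle is engineering: handling $72256$ polytopes in dimension $7$, each with up to $21$ facets and $2187$ candidate Ewald points, requires streaming through \O{}bro's output polytope by polytope rather than holding the whole classification in memory, caching the computed $\mathcal{E}(P)$ once per polytope and reusing it across facets, and using exact integer arithmetic throughout. There is no mathematical subtlety beyond the correctness of the classification and of the linear-algebra routines; the proof is established by running this verification and observing that in every one of the $80912$ cases, and for every facet thereof, the required unimodular basis is found.
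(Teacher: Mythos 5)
The paper gives no proof of Theorem~\ref{Oebro}: it is quoted from \O{}bro's thesis, where it was established exactly in the way you propose, by exhaustive computation over the classified monotone polytopes of dimension at most $7$. So your overall strategy is the intended one. There is, however, a genuine mathematical error in your certification step. The claim that $\mathcal{E}(P)\cap F$ contains a unimodular basis of $\Z^n$ \emph{if and only if} it generates $\Z^n$ as a group (so that a Smith or Hermite normal form computation decides the question) is false in general: a generating set of $\Z^n$ need not contain $n$ elements of determinant $\pm 1$. Already in $\Z^1$ the set $\{2,3\}$ generates but contains no basis; and for points confined to a lattice hyperplane at distance one from the origin, take $(1,0),(1,2),(1,5)$ on $\{x_1=1\}$ in $\Z^2$: their differences generate $\{0\}\times\Z$, hence the three points generate all of $\Z^2$, yet the three pairwise determinants are $2$, $5$ and $3$. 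This configuration is of course not the Ewald set of any monotone polytope, but it shows that the ``equivalence'' you invoke is not a fact of lattice algebra and would need an argument specific to Ewald sets, which you do not give; the parenthetical about $F$ being a smooth lattice polytope constrains the vertices of $F$, not the sublattice generated by the Ewald points lying on it. An implementation based on the normal form test could therefore, in principle, return a false positive.

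Your fallback criterion --- searching for an $n$-tuple of determinant $\pm1$ --- is the correct one, but the feasibility estimate is off: with up to $3^{n-1}=729$ candidate points on a facet in dimension $7$ there are on the order of $\binom{729}{7}\approx 10^{16}$ tuples per facet, which one does not ``iterate over'' across $21$ facets and $72256$ polytopes. In practice one finds a basis greedily (extend a partial unimodular system one point at a time, backtracking when stuck), and this terminates quickly on these inputs; the proof then rests on that search succeeding in every single case, which is precisely the content of \O{}bro's verification. With the unsound normal-form shortcut removed, your plan coincides with the computation the paper cites.
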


\begin{remark}
McDuff~\cite[p.~134]{McDuff-probes} mentions that \O{}bro has verified the strong Ewald condition up to dimension $8$, but in \O{}bro's thesis~\cite{Oebro-tesis} this is stated only up to dimension $7$.
\end{remark}

\begin{proposition}[Paffenholz, for $n=6$]
 \label{paff}
For every $n\ge 6$, there are  $n$-dimensional monotone polytopes satisfying the strong Ewald condition but  not the star Ewald condition.
\end{proposition}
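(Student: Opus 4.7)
The plan is to use a specific Paffenholz-style example for $n=6$ and then bootstrap to all $n\geq 7$ by Cartesian products with the monotone segment $[-1,1]$.

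For the base case $n=6$, we exhibit a specific monotone $6$-polytope $P_0$ (one from the finite classification). By \O{}bro's Theorem~\ref{Oebro}, $P_0$ automatically satisfies the strong Ewald condition, so the substantive task is to locate a single face $f$ of $P_0$ that violates the star Ewald condition: every $\lambda\in \mathcal{E}(P_0)\cap \Star^*(f)$ must also have $-\lambda\in \Star(f)$. This is a finite verification: enumerate $\mathcal{E}(P_0)=\mathbb Z^6\cap P_0\cap -P_0$, list the facets and ridges of $P_0$ that contain $f$, and test each candidate Ewald point against the condition.

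For $n>6$, set $P_n:=P_0\times [-1,1]^{n-6}$, which is $n$-dimensional and monotone (products preserve monotonicity), with $\mathcal{E}(P_n)=\mathcal{E}(P_0)\times \{-1,0,1\}^{n-6}$. To verify strong Ewald for $P_n$, consider a facet $F'=F\times [-1,1]^{n-6}$ with $F$ a facet of $P_0$: picking a unimodular basis $v_1,\dots,v_6\in \mathcal{E}(P_0)\cap F$ of $\mathbb Z^6$ (supplied by strong Ewald for $P_0$), the $n$ vectors $(v_1,0),\dots,(v_6,0),(v_1,e_1),\dots,(v_1,e_{n-6})$ lie in $\mathcal{E}(P_n)\cap F'$ and form a unimodular basis of $\mathbb Z^n$; the cube-type facets $P_0\times [-1,1]^{j-1}\times\{\pm 1\}\times [-1,1]^{n-6-j}$ are handled by the analogous construction, using a unimodular basis of $\mathcal{E}(P_0)$ (weak Ewald for $P_0$) in the first factor.

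To see that $P_n$ fails star Ewald, take $\tilde f:=f\times [-1,1]^{n-6}$ where $f$ is the witness face in $P_0$. The facets (resp.\ ridges) of $P_n$ containing $\tilde f$ are exactly those of the form $F\times [-1,1]^{n-6}$ with $F\supset f$ a facet of $P_0$ (resp.\ $R\times [-1,1]^{n-6}$ with $R\supset f$ a ridge), so $\Star^*_{P_n}(\tilde f)=\Star^*_{P_0}(f)\times [-1,1]^{n-6}$ and $\Star_{P_n}(\tilde f)=\Star_{P_0}(f)\times [-1,1]^{n-6}$. A hypothetical witness $(\lambda_1,\lambda_2)\in \mathcal{E}(P_n)$ for star Ewald of $\tilde f$ would require $\lambda_1\in \mathcal{E}(P_0)\cap \Star^*(f)$ with $-\lambda_1\notin \Star(f)$, which does not exist by the choice of $f$. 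The main obstacle is locating Paffenholz's explicit $6$-dimensional polytope and certifying the failure of star Ewald at the specific face by the finite computation described above; the product bootstrap is then purely formal.
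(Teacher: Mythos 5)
Your overall strategy coincides with the paper's: the $n=6$ case rests on \O{}bro's Theorem~\ref{Oebro} for the strong Ewald condition plus an explicit counterexample to the star condition, and the higher-dimensional cases are obtained by taking Cartesian products with a strong-Ewald monotone polytope (the paper allows any such factor $Q$, e.g.\ a cube, while you specialize to $[-1,1]^{n-6}$), using Proposition~\ref{prop:cartesian} for the strong condition and the factorizations $\Star(F\times Q)=\Star(F)\times Q$, $\Star^*(F\times Q)=\Star^*(F)\times Q$ to transport the failure of the star condition. Your explicit block-triangular unimodular bases in the product facets are a correct, if more hands-on, substitute for simply invoking Proposition~\ref{prop:cartesian}(ii), and your projection argument for why $\tilde f$ cannot be star Ewald is exactly the paper's.

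There is, however, one genuine gap, which you flag yourself: you never exhibit the $6$-dimensional polytope $P_0$ or the face $f$ at which the star condition fails. That existence statement is the entire nontrivial content of the base case --- nothing in the general theory guarantees that a non--star-Ewald monotone $6$-polytope exists (none exists in dimension $\le 5$), so ``locate a Paffenholz-style example'' cannot be left as an unfilled placeholder. The paper closes this gap by writing down the example explicitly: $P=\{x\in\R^6 \,|\, \left(\begin{smallmatrix}-I\\ A\end{smallmatrix}\right)x\le\one\}$ for a concrete $4\times 6$ integer matrix $A$, together with the specific vertex $(-4,1,1,-2,1,1)$ (the intersection of facets $2,3,5,6,9,10$) that fails the star condition; strong Ewald for this $P$ then follows from Theorem~\ref{Oebro} as you say. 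So your write-up is an accurate skeleton of the paper's argument, but it is missing the one essential datum that makes the proposition true.
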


\begin{proof}
For $n=6$ this follows from Theorem \ref{Oebro} and the following example, which is one of several found by Andreas Paffenholz and whose existence is mentioned in \cite[p.~134]{McDuff-probes}. We thank Paffenholz for providing it to us.
Let $P$ be the polytope defined by 
\[
P:=\left\{x\in \R^6 \,|\,\begin{pmatrix}
			-I \\ A
		\end{pmatrix}x\le\one\right\},
		\qquad\text {where }
		A=\begin{pmatrix}
			-1 & 0 & 0 & 1 & 0 & 0 \\
			-1 & 0 & 1 & 2 & 0 & 0 \\
			-1 & 1 & 1 & 3 & 1 & 0 \\
			1 & 0 & 0 & -2 & 0 & 1
		\end{pmatrix}
		\]
		and $I$ is the $6$ by $6$ identity matrix.
		It can be checked that the vertex $F=\{(-4,1,1,-2,1,1)\}$ given by the intersection of the facets in the positions 2, 3, 5, 6, 9 and 10 is not star Ewald. 
		
		The example easily generalizes to higher dimensions. If $Q$ is any other monotone polytope then $P\times Q$ is also monotone, and if $Q$ satisfies the strong Ewald condition (e.g., $Q=[-1,1]^n$) then $P\times Q$ satisfies it as well, as we show in Proposition~\ref{prop:cartesian}. However, the face $F\times Q$ of $P\times Q$ cannot be star Ewald  because
	\[
	\Star(F\times Q)=\Star(F)\times Q,
	\qquad \text{and} \qquad
	\Star^*(F\times Q)=\Star^*(F)\times Q.
	\]
	In particular,  if there was an integer point $p\in\Star^*(F\times Q)\setminus(-\Star(F\times Q))$ then the projection of $p$ to $P$ would be an integer point in $\Star^*(F)\setminus(-\Star(F))$. 
	\qedhere
\end{proof}

\section{Deeply smooth polytopes, face displacements, and the Ewald conditions} 
\label{s2}
\label{sec:deep}

In this section we identify an interesting class of monotone polytopes and prove the three versions of Ewald's Conjecture for them.

\subsection{Face displacements in smooth polytopes}

The following definition is important not only in this section but also in subsequent ones:

\begin{definition}
\label{def:displacement}
Let $F$ be a face of codimension $k$ a smooth lattice polytope $P$, obtained as the intersection of $k$ facets with primitive facet inequalities $u_i\cdot x \le b_i$, for $i=1,\dots,k$ (with $u_i$ primitive). We call \emph{first displacement} of $F$ the polytope
\[
P \cap \{x \in \R^n \,|\, u_i\cdot x = b_i -1,\,\forall i=1,\dots,k\}.
\]
\end{definition}

Observe that if $P$ is monotone then $b_i=1$ for all $i$, so the first displacement of $F$ contains the origin. 
The following transitivity of first displacements is obvious:

\begin{lemma}
\label{lemma:transitive}
Let $F,G$ be faces of a smooth lattice polytope $P$, with $F\subset G$. Let $F_0$ and $G_0$ be their first displacements. Then, $F_0$ is also the first displacement of $F$ as a face of $G$.
\qed
\end{lemma}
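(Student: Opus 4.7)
The plan is to unwind Definition~\ref{def:displacement} in coordinates and check that a ``two-step'' displacement reproduces the one-step one. Choose primitive facet-defining inequalities $u_i\cdot x\le b_i$, $i=1,\dots,k$, of $P$ whose facets contain $F$, arranged so that the first $j\le k$ of them are precisely those whose facets also contain $G$. By Definition~\ref{def:displacement},
\[
G_0=P\cap\bigcap_{i=1}^j\{u_i\cdot x=b_i-1\},\qquad
F_0=P\cap\bigcap_{i=1}^k\{u_i\cdot x=b_i-1\},
\]
so I immediately read off the tautological identity
\[
F_0\;=\;G_0\cap\bigcap_{i=j+1}^k\{u_i\cdot x=b_i-1\}.
\]
The substance of the proof is then to recognise the right-hand side as the first displacement of $F$, taken inside the new ambient polytope $G_0$, once $F$ has been identified with the face of $G_0$ obtained via the natural normal-fan-preserving bijection between faces of $G$ and of $G_0$.

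To justify this identification I would invoke the smoothness of $P$ twice. First, at any vertex $v$ of $F$ the primitive edge vectors $e_1,\dots,e_n$ of $P$ at $v$ form a lattice basis of $\Z^n$ dual to the primitive facet normals $u_1,\dots,u_n$; therefore $v'=v+\sum_{i=1}^j e_i$ is an integer vertex of $G_0$, its local edge structure matches that of $v$ in $G$, and $G_0$ is a smooth lattice polytope normally isomorphic to $G$, with a canonical face $F'$ corresponding to $F$. Second, at the image vertex $v'$ the vectors $e_{j+1},\dots,e_n$ form a lattice basis of $\aff(G_0)-v'$, dual to the restrictions of $u_{j+1},\dots,u_n$ to that affine subspace; hence those restrictions are the primitive facet normals of $G_0$ at the facets containing $F'$, and the right-hand sides stay equal to the original $b_i$. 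Combining the two, the intersection on the right of the displayed identity is exactly the polytope obtained from $G_0$ by shifting its primitive facet-defining hyperplanes containing $F'$ one lattice unit inwards, which is by Definition~\ref{def:displacement} the first displacement of $F'$ in $G_0$, as desired.

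The main obstacle I anticipate is the primitivity check for the restricted facet normals of $G_0$: this is where smoothness enters in an essential way, since in a non-smooth lattice polytope those restrictions could have index greater than one in the lattice of $\aff(G_0)$, and the inequality $u_i\cdot x\le b_i-1$ would no longer be a unit inward shift of a primitive facet of $G_0$. I expect the writeup to spend most of its space on fixing the convention that ``first displacement of $F$ as a face of $G$'' is to be read via the correspondence $G\leftrightarrow G_0$, since after one displacement step $F$ naturally lives as a face of the displaced polytope rather than of $G$ itself; with that reading fixed, the remainder is a mechanical unwinding of Definition~\ref{def:displacement}.
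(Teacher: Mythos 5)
The paper gives no argument for this lemma at all: it is stated with a closing tombstone and introduced as ``obvious'', so there is no route to compare yours against. The content the paper evidently has in mind, and the only thing its later applications use, is precisely the identity you display, $F_0=G_0\cap\bigcap_{i=j+1}^{k}\{u_i\cdot x=b_i-1\}$, together with your observation that each $u_i$ with $j<i\le k$ restricts to a \emph{primitive} functional on the lattice of $\aff(G_0)$ (because $u_i\cdot e_i=-1$ for the dual basis of primitive edge vectors at a vertex of $F$), so that the hyperplanes $\{u_i\cdot x=b_i-1\}$ really are unit inward shifts with respect to the induced lattice. That part of your write-up is correct and is where the smoothness hypothesis genuinely enters.

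There is, however, a genuine error in the step you use to interpret ``the first displacement of $F$ as a face of $G$'': for an arbitrary smooth lattice polytope $P$ it is \emph{not} true that $G_0$ is a smooth lattice polytope normally isomorphic to $G$, nor that $v'=v+\sum_{i=1}^{j}e_i$ is a vertex of $G_0$. The point $v'$ need not lie in $P$ at all --- that is exactly the deeply-smooth condition of Definition~\ref{defi:deeply}, which is not assumed in the lemma --- and $G_0$ can change combinatorial type or collapse: for $P=\delta_2$ and $G$ a vertex one gets $v'=(1,1)\notin P$ and $G_0=\varnothing$; for $P=2\delta_3$ and $G$ an edge, $G_0$ is a single point; and Lemma~\ref{hyp-monotone}(iii) together with Example~\ref{ex-inter} records precisely this failure of normal isomorphism in the presence of unimodular-triangle faces. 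Consequently the ``natural normal-fan-preserving bijection between faces of $G$ and of $G_0$'' that you invoke to produce the face $F'$ does not exist in the stated generality, and your argument as written only proves the lemma under an additional hypothesis such as UT-freeness or deep smoothness (which is, admittedly, the regime in which the paper applies it). The fix is to drop the face correspondence entirely and read the conclusion as the bare two-step identity above: $F_0$ is obtained from $G_0$ by shifting, one lattice unit inwards with respect to the lattice of $\aff(G_0)$, the hyperplanes that cut $F$ out of $G$; this holds with no claim about the combinatorics of $G_0$, and the primitivity computation you already give is the only thing that needs checking.
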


\begin{lemma}
\label{hyp-monotone}
Let $P$ be an $n$-dimensional smooth lattice polytope and  let $F$ be a facet of $P$. Let $F_0$ be the  first displacement of $F$. Then:
	\begin{enumerate}
		\item $F_0$ is a lattice polytope.
		\item If $P$ is monotone (that is, reflexive) then $F_0$ is reflexive.
		\item $F_0$ is normally isomorphic to $F$, except perhaps if $P$ has a $2$-face that is a unimodular triangle with an edge in $F$ and third vertex in $F_0$.
	\end{enumerate}
\end{lemma}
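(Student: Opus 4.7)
Plan for the proof. The linchpin for all three parts is the local smoothness at any vertex $v$ of $F$: the $n$ primitive edge vectors $\tilde e_1,\ldots,\tilde e_n$ at $v$ form a $\Z$-basis of $\Z^n$ dual (up to sign) to the primitive normals of the facets of $P$ meeting $v$. Labelling so that $\tilde e_2,\ldots,\tilde e_n$ span the edges of $F$ at $v$ and $\tilde e_F(v):=\tilde e_1$ is the unique edge at $v$ not in $F$, duality forces $u_F\cdot \tilde e_F(v)=-1$ and $u_F\cdot \tilde e_i=0$ for $i\ge 2$. Consequently $w_v:=v+\tilde e_F(v)$ is a lattice point lying in $P$---the interior edge at $v$ has integer lattice length at least one---with $u_F\cdot w_v=b_F-1$, that is, a lattice point of $F_0$.

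Part (i) then amounts to showing every vertex of $F_0$ arises as some $w_v$. Writing $H:=\{u_F=b_F-1\}$, the vertices of the hyperplane section $F_0=P\cap H$ come either from vertices of $P$ lying on $H$ or from edges of $P$ meeting $H$ transversally in their interior. In both cases the vertex is obtained from some $v\in F$ by taking one step along $\tilde e_F(v)$: the identity $u_F\cdot \tilde e_F(v)=-1$ forces the first lattice point along the interior edge at $v$ to already lie on $H$, coinciding either with the transversal intersection point or with a vertex of $P$ on $H$.

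For part (ii), assume $P$ reflexive so that $b_F=1$ and $H$ passes through the origin, which in turn lies in the relative interior of $F_0$. To prove reflexivity we check that each facet $\tau$ of $F_0$ has a primitive inequality with right-hand side $1$ in the rank-$(n-1)$ lattice $L:=H\cap\Z^n$. Fix a vertex $w=w_v$ of $\tau$. The tangent cone to $P$ at $v$ is the simplicial cone $v+\cone(\tilde e_1,\ldots,\tilde e_n)$, whose slice by $H$ is the simplicial cone at $w$ spanned by $\tilde e_2,\ldots,\tilde e_n$. Its facets at $w$ lie in the hyperplanes $\{u_{G_i}\cdot x=1\}$, where $G_i$ is the facet of $P$ opposite $\tilde e_i$, so $F\cap G_i$ is a ridge of $P$ at $v$. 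Hence globally $\tau=G\cap F_0$ for some facet $G$ of $P$ meeting $F$ in a ridge; and at a vertex of $F\cap G$ the edge $-\tilde e_G$ belongs to $L$ while satisfying $u_G\cdot(-\tilde e_G)=1$, showing $u_G|_L$ is primitive. Reflexivity of $F_0$ follows.

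For part (iii), the map $\phi\colon v\mapsto w_v$ from vertices of $F$ to vertices of $F_0$ is surjective by (i), and the same local analysis matches normal cones---at $v$ and at $w_v$ the normal cone is spanned by the same restrictions $u_{G_i}|_H$---so normal isomorphism reduces to injectivity of $\phi$. The main obstacle, and the source of the exception in the statement, is ruling out $w_v=w_{v'}$ for distinct vertices $v,v'\in F$. In that event $[w,v]$ and $[w,v']$ are two distinct primitive edges of $P$ at the common vertex $w$, spanning a $2$-face $\Pi$ of $P$; since $\Pi\subset\{u_F\le 1\}$ while $v,v'\in\{u_F=1\}$, the segment $[v,v']$ lies on the supporting hyperplane $\{u_F=1\}$ of $\Pi$ and must therefore be a boundary edge of $\Pi$, forcing $\Pi$ to be the triangle $wvv'$. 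Since $\tilde e_F(v)$ and $\tilde e_F(v')$ are part of a $\Z^n$-basis at $w$, their difference $v'-v$ is primitive and $\Pi$ is a unimodular triangle with edge $[v,v']\subset F$ and third vertex $w\in F_0$---precisely the exception allowed by the statement.
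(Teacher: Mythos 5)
Your overall strategy coincides with the paper's: parts (i) and (iii) are the same argument in slightly different clothing (your $w_v=v+\tilde e_F(v)$ is the paper's point $v_e$, and your identification of the coincidence $w_v=w_{v'}$ with a unimodular $2$-face is the paper's width-one triangle argument, just proved via the unimodular basis at $w$ instead of via width). For part (ii) you are more explicit than the paper, which simply restricts the inequality description $u_G\cdot x\le 1$ to the hyperplane $\{x_n=0\}$; your extra care about why the restricted normals are primitive (exhibiting $-\tilde e_G\in L$ with $u_G\cdot(-\tilde e_G)=1$) is a genuine improvement in detail.

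However, part (ii) has a gap at the step ``Hence globally $\tau=G\cap F_0$ for some facet $G$ of $P$ meeting $F$ in a ridge.'' Your justification is that the facets of the slice of the tangent cone of $P$ at $v$ by $H$ are cut out by the $u_{G_i}$ with $G_i\ni v$. That is a correct statement about the sliced cone, but $F_0$ near $w=w_v$ is contained in that sliced cone and need not equal it: when the outgoing edge $e=[v,w]$ has lattice length one, $w$ is a vertex of $P$ and carries an $n$-th facet $G'$ not containing $v$; the constraint $u_{G'}\le 1$ can be non-redundant on the slice, so $F_0$ may have a facet through $w$ supported by $G'$ that is not a facet of your sliced cone. (This is exactly the configuration behind the exception in part (iii), so it cannot be dismissed as exotic.) The claim you need is still true --- if $u_{G'}$ cuts non-trivially then some edge direction $\tilde f_j$ at $w$ has $u_F\cdot\tilde f_j>0$, which by integrality forces $w+\tilde f_j$ to be a vertex $v''$ of $F$ with $w_{v''}=w$, so $G'$ contains $v''$ and hence meets $F$ in a ridge after all --- but this case is not covered by your argument as written. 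The same omission affects the sentence in part (iii) asserting that the normal cones at $v$ and $w_v$ match: there, injectivity of $\phi$ rules out the extra facet by the argument just sketched, but that implication also needs to be said. Both repairs are short; I recommend adding them.
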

\begin{proof}
	\begin{enumerate}
		\item 
		Let $H$ be the affine hyperplane containing $F_0$, which is at lattice distance one from $F$.
		For each edge $e$ incident in $F$, let $u_e$ be the endpoint of $e$ in $F$ and $v_e$ the intersection of $e$ with $H$.
		Since $P$ is simple, there is only one edge that leaves $F$ from each vertex, and since $P$ is smooth, the facets at $u_e$ form a unimodular basis, so $v_e$ has integer coordinates and lies in the edge. 
		Hence $F_0$ must have all the $v_e$ as vertices, and there are no more vertices not coming from $F$ (because every vertex of $F_0$ must have at least one edge towards the half-space containing $F$). In particular, all the vertices of $F_0$ are integer. 
		
		\item Assume without loss of generality that the facet $F$ is defined by the equation $x_n\leq 1$. Then, the inequality description of $F_0$ is the same as that of $P$, restricted to $x_n=0$; hence, $F_0$ is reflexive (some facet inequalities of $P$ may become redundant in $F_0$, but that does not affect the statement).

		\item If $v_e\ne v_f$ for every $e$ then $F_0$ is combinatorially isomorphic to $F$, hence simple. It is also smooth since the facet normals of $F_0$ at each vertex $v_e$ are the same as  those of $F$ at the corresponding vertex $u_e$.
		
		So, suppose that  $v_e=v_f$ for two edges $e,f$. Then, $u_e$ and $u_f$ must be connected with an edge, and the three points form a triangle, with the edge $u_e u_f$ in $F$ and its third vertex $v_e=v_f$ in $F_0$. The triangle has width one with respect to its edge in $F$, and the only smooth triangle of width one is the unimodular one (remember that $P$ is smooth and that every face of a smooth polytope is also smooth). \qedhere
	\end{enumerate}
\end{proof}

\begin{corollary}
If $P$ is monotone and contains no unimodular triangle as a $2$-face then $F_0$ is monotone for every facet $F$ of $P$.
\end{corollary}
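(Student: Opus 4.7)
The plan is to derive the two monotonicity ingredients — reflexivity and smoothness of $F_0$ — directly from the three parts of Lemma~\ref{hyp-monotone}, which has essentially done all the work already.

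Reflexivity of $F_0$ is immediate from Lemma~\ref{hyp-monotone}(2), which requires only that $P$ be monotone and uses no hypothesis on $2$-faces. So the only substantive task is to verify smoothness of $F_0$. For this I would invoke Lemma~\ref{hyp-monotone}(3): the only way $F_0$ can fail to be normally isomorphic to $F$ is if $P$ has a $2$-face that is a unimodular triangle with an edge in $F$ and third vertex in $F_0$. I would briefly check that such a triangle is genuinely a $2$-face of $P$ — its three edges are the edge $u_e u_f$ of $F$ together with the two edges $e,f$ of $P$ leaving $F$ at $u_e$ and $u_f$ and meeting at $v_e=v_f$ — so the hypothesis rules this configuration out.

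Therefore $F_0$ and $F$ have the same normal fan. Since $F$ is a facet of the smooth polytope $P$, it is itself smooth, so its normal fan is rational, simplicial and unimodular. Smoothness being a property of the normal fan alone (combined with $F_0$ being a lattice polytope by Lemma~\ref{hyp-monotone}(1)), we conclude that $F_0$ is smooth. Combined with reflexivity, $F_0$ is monotone.

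There is no real obstacle; the corollary is essentially a packaging of Lemma~\ref{hyp-monotone}. The only small care needed is the verification that the exceptional triangle in part (3) of the lemma is a \emph{face} of $P$ (as opposed to merely a triangle whose vertices lie on $P$), so that the hypothesis ``no unimodular triangle is a $2$-face of $P$'' truly suffices to exclude it.
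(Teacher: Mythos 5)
Your proof is correct and follows exactly the route the paper intends (the corollary is stated without proof there, as an immediate consequence of Lemma~\ref{hyp-monotone}): reflexivity from part (2), and smoothness from part (3) once the exceptional unimodular-triangle configuration is excluded by hypothesis. Your extra check that the exceptional triangle really is a $2$-face of $P$ is a worthwhile precision, and it is indeed established inside the paper's proof of Lemma~\ref{hyp-monotone}(3), where $u_e$ and $u_f$ are shown to be joined by an edge so that the three points span a genuine $2$-face.
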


\begin{example}\label{ex-inter}
	The third part of the Lemma, and the Corollary, may not hold if $P$ has a unimodular triangle as a face. For example, consider the following monotone polytope, for any $n\ge 3$ (it is the polytope $\SSB(n,n-1)$ introduced in Definition \ref{def-ssb}):
	\begin{llave}
		x_i & \ge -1 \quad\forall i,1\le i\le n; \\
		x_1 & \le 1; \\
		(n-1)x_1+x_2+\ldots+x_n & \le 1.
	\end{llave}%
Its intersection with the hyperplane $x_n=0$, parallel to the facet $x_n=-1$, gives a reflexive simplex:
\begin{llave}
	x_i & \ge -1 \quad\forall i,1\le i\le n-1; \\
	(n-1)x_1+x_2+\ldots+x_{n-1} & \le 1.
\end{llave}%
This simplex is not smooth: the normals to the facets in the vertex $(1,-1,\ldots,-1,0)$ are not a unimodular basis.

This example also shows that parts one and two of the lemma fail if $P$ is reflexive (but not smooth). By taking a second intersection, this time with $x_{n-1}=0$, we obtain another simplex
\begin{llave}
	x_i & \ge -1 \quad\forall i,1\le i\le n-2; \\
	(n-1)x_1+x_2+\ldots+x_{n-2} & \le 1,
\end{llave}
which has the non-lattice vertex $((n-2)/(n-1),-1,\ldots,-1,0,0)$.
\end{example}

Since absence of unimodular triangles is important, we give it a name.

\begin{definition}
\label{defi:UTfree}
A lattice polytope $P$ is called \UT-free if there is no $2$-face in $P$ that is a unimodular triangle.
\end{definition}

\begin{lemma}\label{lemma:UT-free}
Let $P$ be a \UT-free monotone polytope. Suppose that the first displacement of every facet of $P$
satisfies the weak (respectively strong) Ewald condition. Then $P$ satisfies the weak (respectively strong) Ewald condition too.
\end{lemma}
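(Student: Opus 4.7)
I begin by noting that, by Lemma~\ref{hyp-monotone} combined with \UT-freeness, for every facet $F$ of $P$ the first displacement $F_0$ is itself an $(n-1)$-dimensional monotone polytope normally isomorphic to $F$; in particular the Ewald hypothesis is meaningful, and $F_0\subseteq P$ yields $\mathcal{E}(F_0)\subseteq \mathcal{E}(P)$. I will repeatedly use that for any $w\in \mathcal{E}(P)$ and any primitive facet normal $u$ of $P$, the value $u\cdot w$ lies in $\{-1,0,1\}$ (since $w\in P\cap -P$).

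For the weak case, I pick any facet $F$ and apply the hypothesis to $F_0$ to get a unimodular basis $w_1,\ldots,w_{n-1}$ of the sublattice $L_F:=\{x\in\Z^n : u_F\cdot x=0\}$ inside $\mathcal{E}(P)$. Since $P$ has at least $n+1$ facets, there is a second facet $G$ with $u_G\ne \pm u_F$; applying the hypothesis to $G_0$ produces a basis of $L_G$ inside $\mathcal{E}(P)$, and because $L_G\ne L_F$ at least one element $w_n$ of that basis lies outside $L_F$, forcing $u_F\cdot w_n = \pm 1$. A direct determinant calculation in coordinates where $u_F$ is the last basis vector then confirms that $\{w_1,\ldots,w_{n-1},w_n\}$ is a unimodular basis of $\Z^n$ inside $\mathcal{E}(P)$.

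For the strong case, I fix a facet $F$ and aim to find a unimodular basis of $\Z^n$ inside $\mathcal{E}(P)\cap F$. The key new ingredient is that, rather than displace $F$, I displace an \emph{adjacent} facet $G$: the intersection $G_0\cap F$ is a facet of $G_0$, and strong Ewald applied to $G_0$ at this facet yields $n-1$ Ewald points of $G_0$ lying in $F\cap G_0$ and forming a unimodular basis of $L_G$. These are automatically Ewald points of $P$ sitting inside $F$. To extend to a full basis of $\Z^n$ I repeat with a second adjacent facet $G'$ satisfying $u_{G'}\ne\pm u_G$; this choice is available whenever $F$ has at least three adjacent facets in $P$, which holds for $n\ge 3$ since $F$ is itself a simple $(n-1)$-polytope with at least $n$ facets and distinct normals. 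The resulting basis of $L_{G'}$ inside $\mathcal{E}(P)\cap F$ contains an element $w_n$ with $u_G\cdot w_n=\pm 1$, and together with the first $n-1$ points this yields the required unimodular basis inside $\mathcal{E}(P)\cap F$.

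The residual low-dimensional cases ($n=1$, and the single $n=2$ case where $F$ is sandwiched between two parallel-opposite adjacent facets, which forces $P$ to be the monotone square) are trivial or can be handled by direct inspection, using in the square case the central symmetry to lift the basis of $F_0$ supplied by the hypothesis to a basis inside $F$. The main conceptual obstacle throughout lies in the strong case: locating Ewald points of $P$ actually inside the facet $F$ (and not merely in its first displacement $F_0$) requires displacing an adjacent facet rather than $F$ itself, and crucially using that $G_0\cap F$ is one of the facets of $G_0$ to which the strong Ewald hypothesis can be applied.
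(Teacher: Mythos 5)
Your proof is correct and follows essentially the same route as the paper's: displace two non-parallel facets (adjacent to $F$ in the strong case), extract unimodular bases of the displaced hyperplane lattices from the Ewald hypothesis, and splice them together using the fact that an Ewald point pairs with each primitive facet normal to a value in $\{-1,0,1\}$. The only quibble is your parenthetical claim that the degenerate $n=2$ configuration forces $P$ to be the monotone square --- the monotone trapezoid also has an edge whose two neighbouring edges are parallel --- but since you dispatch these base cases by direct inspection of the five monotone polygons anyway (a case the paper's own proof silently ignores), this does not affect the argument.
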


\begin{proof}
We first look at the weak Ewald condition.
Let $F_1$ and $F_2$ be two non-parallel facets of $P$ and let $P_1$ and $P_2$ be their first displacements.
	By Lemma \ref{hyp-monotone}, each $P_i$ is monotone and by hypothesis, $\mathcal{E}(P_i)$ contains a unimodular  basis $\mathcal{B}_i$ for the $(n-1)$-dimensional lattice spanned by it. Not all elements of $\mathcal{B}_1$ can lie in $P_2$, since $P_1\cap P_2$ is $(n-2)$-dimensional). Let $v \in \mathcal{B}_1 \setminus P_2$.
		Since $P$ is reflexive and $v,-v\in P\setminus P_2$,  $v$ is at distance $1$ from $P_2$, so $\mathcal{\mathcal{B}_2}\cup \{v\}\subset P$ is a unimodular basis for $\Z^{n+1}$.
		
		Now we consider the strong Ewald condition.
		 Let  $F$ any facet of $P$. Take two facets $F_1$ and $F_2$ adjacent to $F$ that are not parallel, and let $P_1$ and $P_2$ be their two interior displacemens and $F_i'=F\cap P_i$. 
		 By Lemma \ref{hyp-monotone}, $P_1$ and $P_2$ are monotone polytopes and $P_i$ is combinatorially isomorphic to $F_i$ for $i=1,2$. 
		 
		 As $F\cap F_i$ is a facet of $F_i$, $F_i'$ is a facet of $P_i$. By our hypotheses, $P_1$ and $P_2$ satisfy the strong condition, so $\mathcal{E}(F_i')$ contains a unimodular basis  $\mathcal{B}_i$ of the $(n-1)$-dimensional lattice spanned by $P_i$, for $i=1,2$.	 
		 (See Figure \ref{fig-bases} for an illustration of this process.) 
		 As in the first part, since $F_1$ and $F_2$ are not parallel
		 there must is a vector $v\in \mathcal{B}_1$ that is at distance $1$ from $P_1$ , so $\mathcal{B}_1\cup \{v\}$ is a unimodular basis contained in $F$.
\end{proof}

\begin{figure}[h]
\centerline{
	\includegraphics[width=0.33\linewidth,trim=3cm 0 3cm 0]{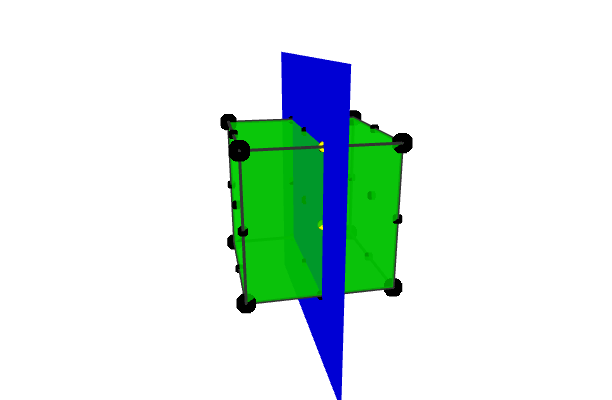}
	\includegraphics[width=0.33\linewidth,trim=3cm 0 3cm 0]{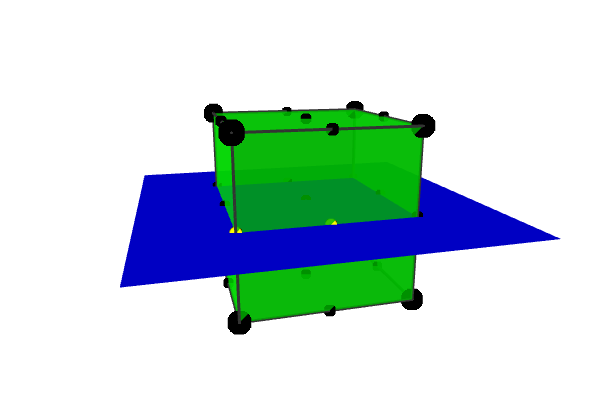}\qquad
	\includegraphics[width=0.3\linewidth]{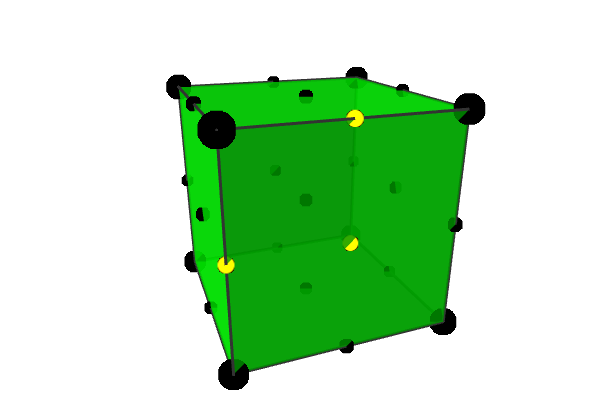}
}
	\caption{The first two figures show unimodular bases $\mathcal{B}_1$ and $\mathcal{B}_2$ (yellow points) of two hyperplanes (blue), as obtained in the proof of Lemma \ref{lemma:UT-free} for the case of the $3$-cube, where $F$ is the facet pointing forward. The third figure shows the resulting unimodular basis of $\Z^3$.}
	\label{fig-bases}
\end{figure}

\begin{corollary}
\label{cor:ewald-ind}
All $8$-dimensional \UT-free monotone polytopes satisfy the strong Ewald Condition.
More generally,
if all monotone polytopes in dimension $n$ satisfy the weak (respectively strong) Ewald condition then all $(n+1)$-dimensional monotone \UT-free polytopes satisfy the weak (respectively strong) Ewald condition.
\end{corollary}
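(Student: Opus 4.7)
The plan is to derive the corollary as a direct combination of Lemma~\ref{lemma:UT-free}, Lemma~\ref{hyp-monotone}, and either the inductive hypothesis (for the general statement) or \O{}bro's Theorem~\ref{Oebro} (for the $8$-dimensional case). The strategy has essentially no new content beyond what has already been set up; it is a packaging of the preceding lemmas.

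More concretely, let $P$ be an $(n+1)$-dimensional \UT-free monotone polytope and let $F$ be any facet of $P$. By Lemma~\ref{hyp-monotone}(i) and (ii), the first displacement $F_0$ is a reflexive lattice polytope of dimension $n$. Since $P$ is \UT-free, the exceptional situation in Lemma~\ref{hyp-monotone}(iii) does not occur, so $F_0$ is normally isomorphic to $F$; in particular $F_0$ is smooth (as $F$, being a face of a smooth polytope, is smooth), and therefore $F_0$ is monotone of dimension $n$.

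By the inductive hypothesis that every monotone polytope in dimension $n$ satisfies the weak (resp.\ strong) Ewald condition, each such $F_0$ satisfies the weak (resp.\ strong) Ewald condition. This is exactly the hypothesis of Lemma~\ref{lemma:UT-free}, which then yields the weak (resp.\ strong) Ewald condition for $P$. For the $8$-dimensional statement, we take $n=7$ and invoke Theorem~\ref{Oebro} of \O{}bro as the base hypothesis, concluding that every $8$-dimensional \UT-free monotone polytope satisfies the strong Ewald condition. There is no real obstacle here, as the two preceding lemmas do all of the work; the only step that requires attention is checking that \UT-freeness of $P$ is enough to ensure that the first displacements $F_0$ are themselves smooth (and not just lattice reflexive), which is precisely what the third clause of Lemma~\ref{hyp-monotone} provides.
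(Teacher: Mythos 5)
Your proposal is correct and matches the paper's proof, which simply states that the corollary ``follows directly from Lemma~\ref{lemma:UT-free} and Theorem~\ref{Oebro}''; you have merely made explicit the (implicit) intermediate step that \UT-freeness plus Lemma~\ref{hyp-monotone} guarantees each first displacement $F_0$ is a monotone $n$-polytope, so that the dimension-$n$ hypothesis applies to it.
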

\begin{proof}
This follows directly  from Lemma~\ref{lemma:UT-free} and Theorem \ref{Oebro}.
\end{proof}

\subsection{Deeply smooth polytopes satisfy the Ewald conditions}

\begin{definition}
\label{defi:deeply}
Let $v$ be a vertex of a lattice smooth polytope $P$, and let $u_1,\dots,u_n$ be the primitive edge vectors at $v$. We call \emph{corner parallelepiped} of $P$ at $v$ the parallelepiped
\[
\{v+\sum_{i=1}^n \lambda_i u_i\,|\, \lambda_i\in [0,1]\ \forall i\}.
\]

We say that $P$ is \emph{deeply smooth} if it contains the corner parallelepipeds at all its vertices.
We call $P$ \emph{deeply monotone} if it is deeply smooth and monotone.
\end{definition}

Observe that every lattice smooth polygon except the unimodular triangle is deeply smooth, and that all faces of a deeply smooth polytope are deeply smooth. In particular, deeply smooth polytopes are \UT-free.

\begin{theorem} \label{thm:deeply}
The following properties are equivalent for a smooth lattice polytope $P$:
\begin{enumerate}
\item $P$ is deeply smooth.
\item The first displacement of every face $F$ of $P$ is normally isomorphic to $F$.
\item $P$ and the first displacement of all its faces are \UT-free.
\end{enumerate}
\end{theorem}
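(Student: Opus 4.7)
The plan is to establish the cyclic chain \emph{(i)} $\Rightarrow$ \emph{(iii)} $\Rightarrow$ \emph{(ii)} $\Rightarrow$ \emph{(i)}. The main computational tool throughout is the dual-basis relation $w_i\cdot e_j=-\delta_{ij}$ that holds at any smooth vertex $v$ between its primitive edge vectors $e_1,\ldots,e_n$ and the primitive outer normals $w_1,\ldots,w_n$ of the $n$ facets meeting at $v$, with the facets defined by $w_i\cdot x\le b_i$.

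For \emph{(i)} $\Rightarrow$ \emph{(ii)}, fix a codimension-$k$ face $F$ with vertex $v$, defined locally by the facets with normals $w_1,\ldots,w_k$. The natural candidate vertex of $F_0$ is $v':=v+e_1+\cdots+e_k$: by the dual-basis identity it satisfies $w_i\cdot v'=b_i-1$ for $i\le k$ and $w_i\cdot v'=b_i$ for $i>k$, so $v'$ lies on the $k$ displaced hyperplanes and on the remaining $n-k$ facets at $v$, while deep smoothness of $P$ places it in $P$. The same corner parallelepiped contains $v'+t e_j$ for $j>k$ and $t\in[0,1]$, so $v'$ is a vertex of $F_0$ whose incident edges point in the directions $e_{k+1},\ldots,e_n$, matching those of $F$ at $v$. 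Matching vertices and edges across all vertices of $F$ yields the desired normal isomorphism. The converse \emph{(ii)} $\Rightarrow$ \emph{(i)} follows immediately: for each subset $S\subseteq\{1,\ldots,n\}$, applying \emph{(ii)} to the face $F_S$ of $P$ through $v$ defined by the facets indexed by $S$ forces the lattice point $v+\sum_{i\in S}e_i$ to belong to $(F_S)_0\subseteq P$, and these $2^n$ points are precisely the lattice vertices of the corner parallelepiped at $v$, whose convex hull is the parallelepiped itself.

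For \emph{(i)} $\Rightarrow$ \emph{(iii)}, deep smoothness excludes unimodular triangle $2$-faces: at a vertex $v$ of such a face $T$, the intersection of the corner parallelepiped of $P$ at $v$ with $\aff(T)$ is a $2$-dimensional parallelogram that would have to lie in $T=P\cap\aff(T)$ but does not. Moreover, each first displacement $F_0$ is itself deeply smooth, because its corner parallelepiped at $v'$ is precisely the $(n-k)$-dimensional face of the corner parallelepiped of $P$ at $v$ obtained by fixing the first $k$ coefficients equal to $1$, and the same computation as above shows this face lies inside $F_0$. Hence $P$ and every $F_0$ are UT-free.

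For \emph{(iii)} $\Rightarrow$ \emph{(ii)} I would induct on dimension. In codimension $1$, UT-freeness of $P$ rules out the exceptional case in Lemma~\ref{hyp-monotone}(iii), so every facet displacement is normally isomorphic to the facet. For a face $F\subset H$ of higher codimension with $H$ a facet, I would identify $F_0$ with the first displacement, inside $H_0$, of the face of $H_0$ corresponding to $F$ under the normal isomorphism between $H$ and $H_0$; then verify that \emph{(iii)} transfers from $P$ to $H_0$ (its UT-freeness and the UT-freeness of its first displacements come directly from \emph{(iii)} for $P$ once those first displacements are recognized as first displacements of faces of $P$), and finally invoke the inductive hypothesis on $H_0$. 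The main obstacle is precisely this last identification: checking that first displacements taken inside $H_0$ coincide with those taken inside $P$ of the corresponding faces requires careful bookkeeping of which facet normals and right-hand sides define which face, at each level of the induction.
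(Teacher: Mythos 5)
Your forward implications are fine: the direct verification that $v'=v+e_1+\cdots+e_k$ is a vertex of $F_0$ with tangent cone $\cone(e_{k+1},\dots,e_n)$, done at every vertex of $F$, does give \emph{(i)}$\Rightarrow$\emph{(ii)} without the dimension induction the paper uses, and your \emph{(i)}$\Rightarrow$\emph{(iii)} is correct. The problem is the step that closes your cycle, \emph{(ii)}$\Rightarrow$\emph{(i)}. You claim that applying \emph{(ii)} to the single face $F_S$ ``forces'' $v+\sum_{i\in S}e_i\in (F_S)_0$. That per-face implication is false for $1<|S|<n$: normal isomorphism of $(F_S)_0$ with $F_S$ only tells you that $(F_S)_0$ has a vertex $u$ with normal cone equal to that of $F_S$ at $v$; writing $u=v_S+\sum_{j\notin S}c_je_j$ with $c_j=b_j-w_j\cdot u\ge 0$, nothing prevents some $c_j>0$, i.e.\ the corresponding vertex need not sit at $v_S$. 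A concrete counterexample is $\SSB(3,2)$ from the paper, at the vertex $v=(1,-1,-1)$ with edge vectors $-e_1,e_2,e_3$: taking $S$ to index the facets $x_2\ge-1$ and $x_3\ge-1$, the face $F_S$ is the edge $\conv\{(1,-1,-1),(-1,-1,-1)\}$ and $(F_S)_0=P\cap\{x_2=x_3=0\}$ is the segment from $(-1,0,0)$ to $(1/2,0,0)$ --- normally isomorphic to $F_S$ --- yet $v+e_2+e_3=(1,0,0)\notin P$. So \emph{(ii)} for that one face does not force the parallelepiped point into $P$; only the conjunction of \emph{(ii)} over \emph{all} faces does, and your argument never uses that. (Only the extreme cases work as you say: $|S|=1$ is automatic for any smooth lattice polytope, and for $|S|=n$ the displaced affine subspace is the single point $v_{[n]}$, so nonemptiness does force it.)

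The paper avoids this by proving the return implication as \emph{(iii)}$\Rightarrow$\emph{(i)} via a minimal counterexample: if some $v_I=v+\sum_{i\in I}u_i\notin P$ with $I$ of smallest cardinality (necessarily $|I|\ge 2$), then for $j,k\in I$ the three points $v_{I\setminus\{j,k\}}$, $v_{I\setminus\{j\}}$, $v_{I\setminus\{k\}}$ span a unimodular triangle appearing as a face of the first displacement of the face generated by $v$ and $u_i$, $i\in I\setminus\{j,k\}$. You would need either this argument or an induction on $|S|$ that exploits \emph{(ii)} for all smaller faces together with the lattice-point structure of iterated facet displacements. Your \emph{(iii)}$\Rightarrow$\emph{(ii)} is the right idea (it is essentially the paper's induction for \emph{(i)}$\Rightarrow$\emph{(ii)} transplanted), but the identification you flag as the ``main obstacle'' --- that displacements computed inside $H_0$ agree with displacements computed inside $P$ --- is genuinely the content of the paper's transitivity lemma and does need the verification that $H_0$ meets every facet of $P$ adjacent to $H$ at the original level; since you leave it unproved and the cycle is already broken at \emph{(ii)}$\Rightarrow$\emph{(i)}, the proof as a whole is incomplete.
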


\begin{proof}
The proof is by induction on the dimension, with the base cases of dimensions 1 and 2 being obvious because all polygons except the unimodular triangle satisfy the properties.

(i)$\Rightarrow$(ii): Since $P$ is deeply smooth, it is \UT-free. Hence, by Lemma~\ref{hyp-monotone}, the first displacement $F_0$ of every facet $F$ is a lattice polytope normally isomorphic to $F$. 

The corner parallelepipeds of $F_0$ are facets of the corner parallelepipeds of $P$, hence they are contained in $F_0$, so $F_0$ is also deeply smooth. Now, for an arbitrary face $G$ consider a facet $F$ containing it. By Lemma~\ref{lemma:transitive} the first displacement of $G$ in $P$ coincides with its first displacement in $F$, and by inductive hypothesis it is a lattice polytope normally isomorphic to $G$.

(ii)$\Rightarrow$(iii): Inductive hypothesis and Lemma~\ref{lemma:transitive} allows us to assume that the first displacement of every proper face of $P$ is \UT-free, so we only need to check that $P$ itself is $\UT$-free. Suppose that it is not, and let $F$ be a $3$-face of $P$ having a unimodular triangle as a face. Let $f$ be a facet of $F$ containing and edge $v_1v_2$ of that triangle but not the third vertex of it. Then the first displacement of $f$ in $F$ (and hence in $P$, again by Lemma~\ref{lemma:transitive}) is not normally equivalent to $f$ since it lacks the edge parallel to $v_1v_2$.

(iii)$\Rightarrow$(i): Suppose that $P$ is not smooth at a certain vertex $v$, and let $u_1,\dots,u_n$ be the primitive edge vectors at $v$. That is, there is an $I\subset\{1,\dots,n\}$ such that the point
\[
v_I:= v+\sum_{i\in I} \lambda_i u_i
\]
is not in $P$. Consider such an $I$ of smallest cardinality, and observe that this cardinality is at least two. Take $j,k\in I$ and  let $J=I\setminus\{j\}$, $K=I\setminus\{k\}$. Let $F$ be the face of $P$ spanned by $v$ and the vectors $u_i, i \in I\setminus \{j,k\} = J\cap K$. Then, the unimodular triangle with vertices $v_{J\cap K}$, $v_{J}$ and $v_{K}$ is a face in the first displacement of $F$. 
\end{proof}

\begin{example}
It is enlightening to check how each of the characterizations in Theorem~\ref{thm:deeply} imply the following:
The smooth lattice simplex $k\delta_n$, which is a smooth lattice polytope of $k\in \N$ (see Definition~\ref{def:simplex}), is deeply smooth if and only if $k\ge n$.
\end{example}

\begin{corollary}
\label{coro:deeply-faces}
If $P$ is deeply smooth then the first displacement of any face of it is deeply smooth (in particular, it is a smooth lattice polytope). 
\end{corollary}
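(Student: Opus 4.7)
The plan is to verify characterization (i) of Theorem~\ref{thm:deeply} for $F_0$, namely that $F_0$ contains the corner parallelepiped at each of its vertices. As a preliminary step I would invoke part (ii) of the same theorem applied to $P$ itself: $F_0$ is normally isomorphic to $F$. Since $F$ is smooth as a face of the smooth polytope $P$, this means $F_0$ has a simplicial unimodular normal fan, so once $F_0$ is shown to be a lattice polytope it will automatically be smooth.

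The key step is to make the vertex correspondence between $F$ and $F_0$ completely explicit. Let $F$ have codimension $k$ in $P$ and fix a vertex $v$ of $F$. Simplicity of $P$ exhibits $v$ as the intersection of exactly $n$ facets; let $w_1,\dots,w_n$ be their primitive outer normals and $u_1,\dots,u_n$ the primitive edge vectors at $v$, so that $\{u_i\}$ is a unimodular basis of $\Z^n$ with $w_i\cdot u_j=-\delta_{ij}$. Let $I\subset\{1,\dots,n\}$, with $|I|=k$, index those facets whose defining inequality is tight along all of $F$. The duality immediately gives that $v':=v+\sum_{i\in I}u_i$ satisfies $w_i\cdot v'=c_i-1$ for $i\in I$ and $w_j\cdot v'=c_j$ for $j\notin I$, so $v'$ is the lattice vertex of $F_0$ corresponding to $v$ under the normal isomorphism, and the primitive edge vectors of $F_0$ at $v'$ are $\{u_j:j\notin I\}$.

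With this description in hand, the corner parallelepiped of $F_0$ at $v'$ is
\[
\Pi_{v'}=\Bigl\{v+\sum_{i\in I}u_i+\sum_{j\notin I}\mu_j u_j:\mu_j\in[0,1]\Bigr\},
\]
which is a face of the corner parallelepiped $\Pi_v$ of $P$ at $v$, obtained by setting $\lambda_i=1$ for $i\in I$ and $\lambda_j=\mu_j$ for $j\notin I$. Deep smoothness of $P$ gives $\Pi_v\subseteq P$; the same duality shows that $\Pi_{v'}\subseteq\aff(F_0)$, whence $\Pi_{v'}\subseteq F_0$. This proves that $F_0$ is deeply smooth.

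I do not anticipate a substantive obstacle: the whole argument is bookkeeping around the duality $w_i\cdot u_j=-\delta_{ij}$ intrinsic to smooth polytopes. The one step requiring mild care is verifying that the affine map $v\mapsto v+\sum_{i\in I}u_i$ really does realize the normal isomorphism of Theorem~\ref{thm:deeply}(ii) at the vertex level; this is immediate from the fact that both $v$ in $F$ and $v'$ in $F_0$ are uniquely characterized as the points where the $n-k$ facet inequalities indexed by $\{1,\dots,n\}\setminus I$ are tight.
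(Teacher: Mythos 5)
Your proof is correct. The paper disposes of this corollary in one line, by combining Lemma~\ref{lemma:transitive} with characterizations (ii) or (iii) of Theorem~\ref{thm:deeply}: the facet case is already established inside the proof of (i)$\Rightarrow$(ii) there (where it is observed that the corner parallelepipeds of the displacement of a facet are facets of the corner parallelepipeds of $P$), and the case of an arbitrary face is reduced to the facet case by recursing through a facet containing it, using that faces of deeply smooth polytopes are deeply smooth. You instead verify characterization (i) directly for a face of arbitrary codimension $k$ in a single step, via the explicit vertex correspondence $v\mapsto v'=v+\sum_{i\in I}u_i$ coming from the duality $w_i\cdot u_j=-\delta_{ij}$; the bookkeeping is all sound (in particular $v'\in P$ because it is a vertex of the corner parallelepiped $\Pi_v\subseteq P$, and $\Pi_{v'}\subseteq\aff(F_0)\cap P=F_0$ since $F_0$ is exactly $P$ intersected with the $k$ shifted hyperplanes). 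What your route buys is a self-contained, computation-free-of-induction argument that also exhibits the lattice vertices of $F_0$ explicitly; what the paper's route buys is brevity, at the cost of leaning on the internal structure of the proof of Theorem~\ref{thm:deeply}. You do still invoke Theorem~\ref{thm:deeply}(ii) to know that $F_0$ is normally isomorphic to $F$ (hence that its vertices and edge directions are exactly the ones you compute), so your argument is not independent of that theorem either; that reliance is legitimate and clearly flagged.
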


\begin{proof}
Obvious, by Lemma~\ref{lemma:transitive} and characterizations (ii) or (iii) in Theorem~\ref{thm:deeply}.
\end{proof}

\begin{example}
All first displacements of (proper) faces of $P$ being deeply smooth lattice polytopes is not enough for $P$ to be deeply smooth. 
For an example, consider a smooth lattice tetrahedron of size three and let $P$ be obtained from it by cutting (``blowing up'') its four vertices at distance one. This gives a smooth lattice polytope with eight facets: four unimodular triangles (hence $P$ is not deeply smooth) and four monotone hexagons. 

The first displacements of all facets of $P$ are deeply smooth (they are smooth lattice triangles of size two); hence, by Corollary~\ref{coro:deeply-faces} and Lemma~\ref{lemma:transitive}, the first displacements of edges and vertices are deeply smooth too.
\end{example}

We can now prove the Ewald conditions for deeply smooth polytopes.

\begin{theorem}
\label{thm:deep-ewald}
Let $P$ be a deeply monotone polytope. Then 
\begin{enumerate}
\item If $u_1$ and $u_2$ are primitive edge vectors of $P$ at the same vertex $v$ then 
$\mathcal{E}(P)$ contains $u_1$, $u_2$, and at least one of $u_1+u_2$ and $u_1-u_2$.
\item $P$ satisfies the star Ewald condition.
\item $P$ satisfies the strong Ewald  condition.
\end{enumerate}
\end{theorem}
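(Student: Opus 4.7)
The plan is to induct on the dimension $n$, with base cases $n\le 2$ handled by direct inspection: for $n=1$ the only monotone polytope is $[-1,1]$, and for $n=2$ each of the five monotone polygons of Figure~\ref{fig-mon2} is deeply monotone and satisfies (i)--(iii) by a finite check. For the inductive step I would fix $n\ge 3$ and assume the theorem in all lower dimensions. A useful preliminary identity at any vertex $v$ of $P$: if $u_1,\dots,u_n$ are the primitive edge vectors at $v$ and $w_1,\dots,w_n$ the primitive facet normals at $v$, then $w_i\cdot u_j=-\delta_{ij}$, and reflexivity ($w_i\cdot v=1$) then forces $v=-\sum_j u_j$.

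Part (i) is the heart of the argument. Given edge vectors $u_1,u_2$ at $v$, I would choose any $k\in\{3,\dots,n\}$ (possible since $n\ge 3$), let $F_k$ be the facet at $v$ with normal $w_k$, and pass to its first displacement $F_{k,0}$. By Lemma~\ref{hyp-monotone} and Corollary~\ref{coro:deeply-faces}, $F_{k,0}$ is deeply monotone of dimension $n-1$, and by Theorem~\ref{thm:deeply}(ii) it is normally isomorphic to $F_k$. Hence its vertex corresponding to $v$ is $v+u_k$, with primitive edge vectors $\{u_i : i\ne k\}$, which contain both $u_1$ and $u_2$. The inductive hypothesis applied at $v+u_k\in F_{k,0}$ yields $u_1,u_2$ and at least one of $u_1\pm u_2$ in $\mathcal{E}(F_{k,0})$; since $F_{k,0}$ lies in the hyperplane $\{w_k\cdot x=0\}$ through the origin we have $\mathcal{E}(F_{k,0})\subseteq\mathcal{E}(P)$, proving (i).

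Part (ii) then reduces to (i). For a face $f=F_1\cap\dots\cap F_k$ of codimension $k\ge 1$, I would pick any vertex $v\in f$ (so all the $F_i$ are facets at $v$), label the primitive edges at $v$ so that $u_i$ leaves $F_i$, and take $\lambda=-u_1$; by (i) we have $\lambda\in\mathcal{E}(P)$. Using $w_i\cdot u_j=-\delta_{ij}$ one checks that $-u_1$ lies on $F_1$ and on no other $F_i$ with $i\le k$, so $-u_1\in\Star^*(f)$, while $u_1$ lies on no $F_i$ with $i\le k$, so $u_1\notin\Star(f)$. Part (iii) follows by induction via Lemma~\ref{lemma:UT-free}: $P$ is \UT-free by Theorem~\ref{thm:deeply}(iii), each first displacement of a facet of $P$ is deeply monotone (hence satisfies the strong Ewald condition by the inductive hypothesis), and Lemma~\ref{lemma:UT-free} then delivers the strong Ewald condition for $P$.

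The hard part will be the inductive step of (i): one needs an $(n-1)$-dimensional polytope inside which both $u_1$ and $u_2$ literally reappear as primitive edges at a single vertex. For a general monotone polytope this can fail, because the first displacement of a facet may be combinatorially different from the facet (losing or acquiring vertices when unimodular triangles are present), which would break the descent. Deep monotonicity precisely forbids this via the normal-isomorphism clause of Theorem~\ref{thm:deeply}(ii), and once that stability is in hand parts (ii) and (iii) follow almost mechanically.
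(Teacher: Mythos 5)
Your proposal is correct and, at its core, is the paper's own argument: unwinding your induction in part (i) via Lemma~\ref{lemma:transitive} lands exactly on the paper's direct use of the first displacements of the edges at $v$ (which are the monotone segments $[-u_i,u_i]$, giving $u_i\in\mathcal{E}(P)$) and of the $2$-face spanned by $u_1,u_2$ (a monotone polygon, giving $-u_1\pm u_2$), and your choice $\lambda=-u_1$ in part (ii) is precisely the paper's. The only divergence is part (iii), which you obtain from Lemma~\ref{lemma:UT-free} by induction on dimension, while the paper reads off the explicit unimodular basis $\{-u_1\}\cup\{-u_1\pm u_i: 2\le i\le n\}\subset\mathcal{E}(P)\cap F_1$ directly from part (i); both routes are valid, the paper's being slightly more economical since it requires no further induction.
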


\begin{proof}
Let $f_1$ and $f_2$ be the edges at $v$ in the directions of $u_1$ and $u_2$.
Since $P$ is deeply smooth, the first displacement of $f_i$ is a monotone segment in the direction of $u_i$, that is, it is the segment $[-u_i,u_i]$. This shows that $u_1,u_2 \in  \mathcal{E}(P)$.
Now let $F_1$ be the facet containing $v$ but not $f_1$, and let $f_{12}$ be the 2-face containing $v$ and the edges with directions $u_1$ and $u_2$. Observe that $-u_1\in F_1$. 
The first displacement of $f_{12}$ is a monotone polygon containing $-u_1$ and intersecting $F_1$ in an edge of direction $u_2$. Since monotone polygons satisfy the strong Ewald condition, that edge contains 
(at least) another point of $\mathcal{E}(P)$. Hence, $\mathcal{E}(P)$ contains one of $-u_1+u_2$ and $-u_1-u_2$ (which is equivalent to the statement).

For part (ii), let $f$ be any face and $F$ a facet containing $f$. Let $v$ be a vertex of $f$ and $u$ the primitive vector for the edge at $v$ not contained in $F$. Then, $-u$ is a point in $F$ but not in $\starop(f)$, hence it is in $\Star^*(f)$. Since $u$ is not in $\Star(f)$ we are done.

For part (iii) let $F_1$ be a face and $v$ be a vertex of $F_1$. Let $u_1,\dots,u_n$ be the primitive edge vectors at $v$, with $u_1$ opposite to $F_1$. By part (i) the points $-u_1$ and $-u_1\pm u_i$, $i\in 2,\dots,n$ are in $\mathcal{E}(P)$. They clearly form an unimodular basis contained in $F_1$.
\end{proof}

\begin{remark}
In dimension $3$ all but two of the 18 monotone polytopes are deeply monotone (or \UT-free, which is equivalent for this dimension), but the proportion decreases rapidly with dimension and already in dimension $5$ less than half of the monotone polytopes 
are deeply monotone; see Table~\ref{table-utfree}. 
\end{remark}

\begin{figure}[h]
	\includegraphics[scale=0.3,trim=5.5cm 0.6cm 4.2cm 2cm,clip]{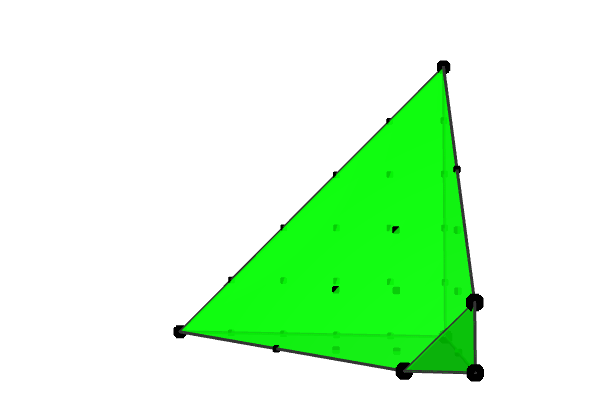}\qquad
	\includegraphics[scale=0.3,trim=5.5cm 0.4cm 4.2cm 2.5cm,clip]{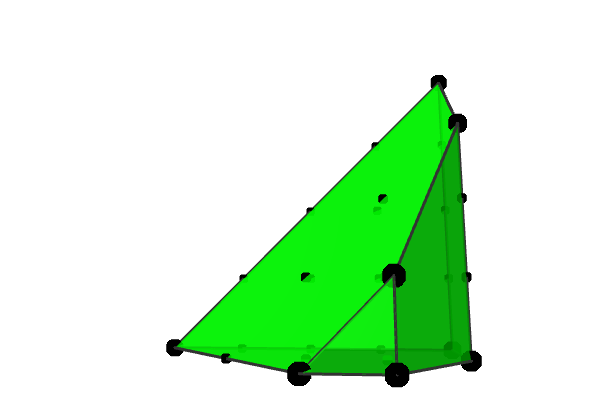}
	\caption{The only two non-\UT-free $3$-dimensional monotone polytopes. 
	}
	\label{noutfree}
\end{figure}

\begin{table}[h] \label{ttt}
	\begin{center}
		\begin{tabular}{|c|c|c|c|} \hline
			dimension & monotone & monotone \UT-free & deeply monotone \\ \hline
			3 & 18 & 16 & 16 \\
			4 & 124 & 74 & 72 \\
			5 & 866 & 336 & 300 \\
			6 & 7622 & 1699 & 1352 \\ \hline
		\end{tabular}
	\end{center}
	
	\medskip
	
	\caption{The number of polytopes of each class for each dimension. Theorem~\ref{thm:deep-ewald} says that Conjecture~\ref{ec} is true for 
		deeply monotone polytopes.}
	\label{table-utfree}
\end{table}

\section{Fiber bundles and neat polytopes} \label{bundlesection}
\label{sec:bundles}

Here we discuss the behavior of the fiber bundle operation in the context of neat polytopes. We also discuss interesting classes
of examples.

\subsection{Fiber bundles}

\begin{definition}[Bundle of a polytope]\label{defbundle}
	Let $n,k\in\N$. Given three polytopes $P\subset \R^{k+n}$, $B\subset \R^k$ and $Q\subset \R^n$, we say that $P$ is a \textit{bundle with base $B$ and fiber $Q$} if the following conditions hold:
	\begin{enumerate}
		\item $P$ is combinatorially equivalent to $B\times Q$.
		\item There is a short exact sequence of linear maps
		\[
		0\rightarrow \R^n\overset{i}{\longrightarrow} \R^{k+n}\overset{\pi}{\longrightarrow} \R^k\rightarrow 0
		\]
		such that
		$\pi(P)=B$
		and for every $x\in B$ we have that the polytope $Q_x:=\pi^{-1}(x)\cap P$ is normally isomorphic to 
		$i(Q)$.
			\end{enumerate}
\end{definition}

\begin{remark}
Our definition is equivalent to McDuff-Tolman~\cite[Definition 3.10]{McDTol} and \cite[Definition 5.1]{McDuff-probes} except there it is made in terms of the normal fans. The translation of our definition to their context is that the dual exact sequence 
\[
0\rightarrow (\R^k)^*\overset{\pi^*}{\longrightarrow} (\R^{k+n})^*\overset{i^*}{\longrightarrow} (\R^n)^*\rightarrow 0\]
injects the normal fan of $Q$ into that of $P$ and projects the normal fan of $P$ into that of $B$.
In particular, our use of ``base'' and ``fiber''  is reversed with respect to theirs.

Fiber bundles are also related to the following constructions:
\begin{itemize}
\item They are \emph{polytope bundles} in the sense of Billera-Sturmfels~\cite{BilStu}, defined in much the same way as fiber bundles except that $P$ does not need to be a polytope, or even convex, and the $Q_x$, although polytopes, may not be normally isomorphic to $Q$. 

\item They generalize the \emph{semidirect products} $B \ltimes_h Q$ of Haase et al.~\cite[Section 2.3.4]{HPPS2021}, which are the case in which all the fibers $Q_x$ are homothetic to $Q$; in their notation $h:B\to (0,\infty)$ is an affine function that gives the scaling factor at each $x\in B$.
\end{itemize}
\end{remark}

\begin{figure}[h]
	\includegraphics[width=0.32\linewidth,trim=3cm 0 3cm 0,clip]{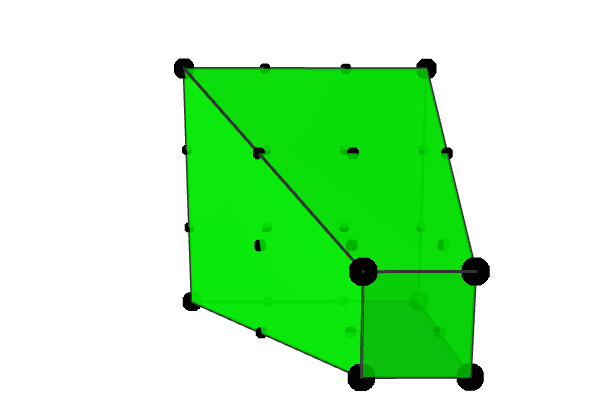}
	\includegraphics[width=0.32\linewidth,trim=3cm 0 3cm 0,clip]{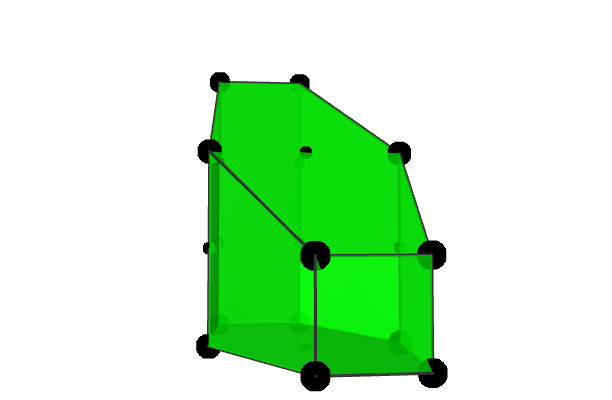}
	\caption{Two monotone bundles. The first has a segment as base and a square as fiber. The second has a hexagon as base and a segment as fiber.}
	\label{figbundles}
\end{figure}

\begin{example}
	A Cartesian product is a bundle, where any one of the two factors is the base and the other the fiber. See Figure \ref{figbundles} for two more examples. 
	All but three of the $18$ monotone polytopes decompose as bundles. The three that do not are the monotone tetrahedron and two ``wedges over a pentagon''.
\end{example}

The star Ewald case of the following result is \cite[Corollary 5.5]{McDuff-probes}:

\begin{proposition}
\label{prop:cartesian}
Let $P=P_1 \times \ldots\times P_s$ be a Cartesian product. Then:
\begin{enumerate}
\item $\mathcal{E}(P_1 \times \ldots\times P_s)=\mathcal{E}(P_1)\times \ldots \times \mathcal{E}(P_s)$.
\item $P$ satisfies the weak (resp. strong, star) Ewald condition if and only if
		every $P_i$ does.
\end{enumerate}
\end{proposition}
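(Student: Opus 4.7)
Part (1) is a direct consequence of the definitions: a lattice point $x=(x_1,\dots,x_s)\in \Z^n=\Z^{n_1}\times\cdots\times\Z^{n_s}$ lies in $P=P_1\times\cdots\times P_s$ (respectively in $-P$) if and only if each $x_i$ lies in $P_i$ (respectively in $-P_i$), so $\mathcal{E}(P)=\mathcal{E}(P_1)\times\cdots\times\mathcal{E}(P_s)$. The star Ewald case of (2) is McDuff's \cite[Corollary 5.5]{McDuff-probes}; its proof rests on the product formula $\Star(f_1\times f_2) = (\Star(f_1)\times P_2)\cup(P_1\times\Star(f_2))$ together with an analogous formula for $\Star^*$, which allow one to match witnesses of the star Ewald property for a product face with witnesses for each factor face. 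In what follows I sketch the weak and strong cases, which are the new content of the proposition.

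For the forward direction of (2) (each factor satisfies a condition $\Rightarrow$ the product does): given unimodular bases $B_i\subset\mathcal{E}(P_i)$ of $\Z^{n_i}$ for each $i$, I would use that $0\in\mathcal{E}(P_j)$ for every $j$ (origin in interior) to pad each $b\in B_i$ with zeros in the remaining blocks; by (1) the resulting vectors lie in $\mathcal{E}(P)$, and their disjoint union forms a unimodular basis of $\Z^n$. For the strong version one observes that every facet of $P$ has the form $F_{i_0}\times\prod_{j\neq i_0}P_j$ for some facet $F_{i_0}$ of some $P_{i_0}$ and, by (1),
\[
\mathcal{E}(P)\cap\Bigl(F_{i_0}\times\prod_{j\neq i_0}P_j\Bigr) \;=\; (\mathcal{E}(P_{i_0})\cap F_{i_0})\times\prod_{j\neq i_0}\mathcal{E}(P_j).
\]
Combining a unimodular basis of $F_{i_0}$ coming from strong Ewald of $P_{i_0}$ with unimodular bases coming from the (implied) weak Ewald condition of each other $P_j$ yields the desired unimodular basis inside every facet of $P$.

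The reverse directions are subtler and constitute the main obstacle. Given a unimodular basis $B\subset\mathcal{E}(P)$ of $\Z^n$, the projection $\pi_i(B)\subset\mathcal{E}(P_i)$ generates $\Z^{n_i}$ as an abelian group; the task is to promote this to a unimodular basis inside $\mathcal{E}(P_i)$, which is not automatic (for instance $\{2,3\}\subset\Z$ generates $\Z$ but contains no basis). The key extra input is that $\mathcal{E}(P_i)$ is the lattice-point set of the centrally symmetric convex body $P_i\cap(-P_i)$, so it is both symmetric and lattice-convex: for any $x,y\in\mathcal{E}(P_i)$ every integer point on the segment $[x,y]$ again belongs to $\mathcal{E}(P_i)$, and in particular together with any nonzero $x\in \mathcal{E}(P_i)$ the primitive integer vector on $[0,x]$ also lies in $\mathcal{E}(P_i)$. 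A short combinatorial argument exploits these closure properties to enrich the projected generating set into one containing a unimodular basis, completing the converse for weak Ewald. The converse for strong Ewald is analogous but carried out inside the facet slice, where one works with the convex (no longer symmetric) set $\mathcal{E}(P_{i_0})\cap F_{i_0}$ and uses the symmetry of $\mathcal{E}(P_{i_0})$ inside $P_{i_0}$ to generate intermediate lattice witnesses on $F_{i_0}$.
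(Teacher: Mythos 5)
Your part (1), your forward implications in part (2), and your deferral of the star case to McDuff's \cite[Corollary 5.5]{McDuff-probes} all coincide with the paper's proof: part (1) is the same coordinatewise verification, and the forward direction is the same padding-with-zeros argument (using $0\in\mathcal{E}(P_j)$ for every $j$) together with the observation that each facet of the product is a facet of one factor times the remaining factors.

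The genuine gap is in the converse direction of (2). You correctly observe that the projection $\pi_i(B)$ of a unimodular basis $B\subset\mathcal{E}(P_1)\times\cdots\times\mathcal{E}(P_s)$ is only guaranteed to generate $\Z^{n_i}$ as a group and need not itself contain a unimodular basis (your example $\{2,3\}\subset\Z$ is realized, say, by the unimodular basis $\{(2,1),(3,2)\}$ of $\Z^2$). But you then dispose of the difficulty by invoking an unspecified ``short combinatorial argument'' based on the symmetry and lattice-convexity of $\mathcal{E}(P_i)=\Z^{n_i}\cap P_i\cap(-P_i)$. That argument is the entire content of the converse and must be supplied; the statement it would have to establish is that a centrally symmetric, lattice-convex subset of $\Z^m$ containing $0$ and generating $\Z^m$ contains a unimodular basis. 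This is provable for $m\le 2$ by a width argument (if a primitive $v$ lies in the set and some point sits at lattice height $h\ge 2$ over the line $\R v$, then convexity and central symmetry force the slice of $P_i\cap(-P_i)$ at each height $c\le h/2$ to have lattice length at least $1$ in direction $v$, hence to contain a lattice point), but the obvious induction on $m$ breaks down because lattice points of a projected convex body need not lift to lattice points of the body, so in the generality you claim it the step is unproven. To be fair, the paper's own proof is equally terse at exactly this spot --- it simply asserts that the projection of the basis ``must contain a unimodular basis'' --- so you have located a real soft point; but having explicitly flagged the issue you cannot then resolve it by assertion. Note also that only the forward implications are used elsewhere in the paper (e.g.\ in the proof of Proposition~\ref{paff}), and those you have proved completely.
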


\begin{proof}
We prove each item separately.
	\begin{enumerate}
		\item We write the proof for two polytopes $P$ and $Q$. For any lattice point $(x,y)$, $(x,y)\in \mathcal{E}(P\times Q)$ means that $(x,y),-(x,y)\in P\times Q$, which in turn means that $x,-x\in P, y, -y\in Q$. On the other hand, $(x,y)\in \mathcal{E}(P)\times \mathcal{E}(Q)$ is the same as $x\in \mathcal{E}(P),y\in \mathcal{E}(Q)$, which is equivalent to $x,-x\in P, y, -y\in Q$.

		\item We start with the weak condition. If each $\mathcal{E}(P_i)$ contains a unimodular basis, their union gives a unimodular basis contained in $\mathcal{E}(P_1)\times \ldots \times \mathcal{E}(P_s)$.
		Conversely, if $\mathcal{E}(P_1)\times \ldots \times \mathcal{E}(P_s)$ contains a unimodular basis, its projection to each one of the $\mathcal{E}(P_i)$'s must contain a unimodular basis.
		
		For the strong condition, the reasoning is essentially the same, because a facet of $P_1 \times \ldots\times P_s$ is the product of a facet of a $P_i$ by all the other factors.
		
		For the star condition, the statement is proved in \cite[Corollary 5.5]{McDuff-probes}.
\qedhere
	\end{enumerate}
\end{proof}

\begin{proposition}[Canonical coordinates of a bundle]
\label{prop:bundle}
Let  $B\subset \R^k$ and $Q\subset \R^n$ be polytopes with facet descriptions
\[
B=\{x\in \R^k\,|\, u_i\cdot x \le b_i, i =1,\dots, l\},\qquad
Q=\{y\in \R^n\,|\, t_j\cdot y \le a_j, j =1,\dots, m\}.
\]
For each $j\in 1,\dots,m$, let $\phi_j:\R^k \to \R$ be an affine function and for each $x\in B$ consider the polytope
\[
Q_x:=\{ y\in \R^n\,|\, t_j\cdot y \le a_j + \phi_j(x), j =1,\dots, m\}.
\]
Finally, let $P\subset \R^{k+n}$ be defined by
\[
	P:=\left\{ (x,y)\in \R^k\times \R^n\,|\, 
	\begin{array}{ll}
	u_i\cdot x \le b_i,& i =1,\dots, l \\ 
	t_j\cdot y \le a_j + \phi_j(x), &j =1,\dots, m
	\end{array}
	\right\}.
\]
\begin{enumerate}
\item If $Q_x$ is normally isomorphic to $Q$ for every $x\in B$ then $P$ is a bundle with fiber $Q$ and base $B$.
\item All  bundles arise in this way, modulo a change of bases in $\R^k$, $\R^n$ and $\R^{n+k}$.
\end{enumerate}
\end{proposition}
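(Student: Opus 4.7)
For part (i), the plan is to verify the two axioms of Definition~\ref{defbundle} directly, using the ``obvious'' short exact sequence in which $\pi:\R^{k+n}\to\R^k$ is projection onto the first $k$ coordinates and $i:\R^n\to\R^{k+n}$ is inclusion into the last $n$. The image $\pi(P)$ equals $B$ because for each $x\in B$ the fiber $Q_x$ is, by assumption, normally isomorphic to $Q$ and in particular nonempty. By construction $\pi^{-1}(x)\cap P=\{x\}\times Q_x$, which again by hypothesis is normally isomorphic to $i(Q)$. For the combinatorial equivalence with $B\times Q$, I would use the facet description of $P$: the $l$ inequalities $u_i\cdot x\le b_i$ cut out faces that project to the facets of $B$ and contain every full fiber, while the $m$ inequalities $t_j\cdot y\le a_j+\phi_j(x)$ cut $Q_x$ along the image of a facet of $Q$ for every $x$; matching these facets with $F_i^B\times Q$ and $B\times F_j^Q$ respectively and checking that intersections of facets of $P$ go to intersections of the corresponding facets of $B\times Q$ gives an isomorphism of face lattices.

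For part (ii), suppose $P\subset\R^{k+n}$ is a bundle with base $B$ and fiber $Q$ with respect to some short exact sequence. After choosing bases in $\R^k$, $\R^{k+n}$, $\R^n$ that split this exact sequence, we may assume that $\pi$ and $i$ are the projection and inclusion of the canonical-coordinate setup. Then I would classify the facets of $P$ by looking at the primitive outer normal $(v,t)\in(\R^k)^*\oplus(\R^n)^*$ of each defining inequality $v\cdot x+t\cdot y\le c$. If $t=0$, the inequality involves only $x$, and since $Q_x\ne\varnothing$ for all $x\in B$, its trace on $B$ is precisely a facet of $B$, yielding one of the inequalities $u_i\cdot x\le b_i$. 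If $t\ne 0$, then restricting the inequality to the fiber $\{x\}\times Q_x$ shows that $t$ is the outer normal of a facet of $Q_x$; by the normal isomorphism with $Q$, $t$ must be (proportional to) one of the facet normals $t_j$ of $Q$, and the right-hand side $c-v\cdot x$ is an affine function of $x$, which we set equal to $a_j+\phi_j(x)$.

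The combinatorial equivalence $P\cong B\times Q$ produces exactly $l+m$ facets, so by counting we get each $u_i$ once and each $t_j$ once, recovering the displayed form of $P$. The last thing to verify is that each $Q_x$ obtained in this way is normally isomorphic to $Q$: this is just the assumption that $P$ is a bundle, so nothing extra has to be checked.

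The main obstacle is the bookkeeping in part (ii): one must argue that the two types of normals $(v,0)$ and $(v,t)$ with $t\ne 0$ exhaust the facets of $P$ without overlap, and that the latter produces each facet of $Q$ exactly once. This relies crucially on the normal-isomorphism condition $Q_x\cong Q$ together with the combinatorial equivalence $P\cong B\times Q$; part (i) is then essentially bookkeeping in the opposite direction.
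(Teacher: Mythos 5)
Your proposal is correct and follows essentially the same route as the paper: part (i) verifies Definition~\ref{defbundle} directly for the standard inclusion/projection and matches the $l+m$ defining inequalities with the facets $F_i\times Q$ and $B\times G_j$ (the paper makes this precise by induction on $k+n$), and part (ii) normalizes $i$ and $\pi$ by a change of bases and then reads off the facet normals $(u_i,0)$ and $(t_j,s_j)$ from the normal isomorphism of the fibers with $Q$. The only cosmetic difference is that the paper identifies the fiber-type normals by intersecting with a single generic fiber $\pi^{-1}(x_0)$, whereas you classify all facet normals by whether their $(\R^n)^*$-component vanishes; both yield the same affine functions $\phi_j$.
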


\begin{proof}
For part (i), let $i:\R^n\to \R^{k+n}$ and $i:\R^{k+n}\to \R^{k}$ be the standard inclusion and projection. Then, 
\begin{align*}
\pi(P) &= \{x\in \R^k \,|\, \exists y\in \R^n\ \text{s.t.}\ x\in B, y \in Q_x\} = B, 
\end{align*}
and
$\pi^{-1}(x)\cap P = i(Q_x)$,
which is normally isomorphic to $i(Q)$ by hypothesis.
It only remains to show that $P$ is combinatorially equivalent to $B\times Q$. By construction, $P$ is defined by $l+m$ inequalities, as corresponds to a Cartesian product of polytopes with $l$ and $m$ inequalities respectively. We only need to show that the facet defined by each is combinatorially equivalent to $B$ times the corresponding facet of $Q$ or vice-versa, which we do by induction on $k+n$:
\begin{itemize}
\item For each $i\in \{1,\dots,l\}$, if $F_i$ is the facet of $B$ corresponding to the inequality $u_i\cdot x \le b_i$, induction on the dimension (replacing $\R^k$ with the hyperplane $H_i\cong \R^{k-1}$ containing $F_i$) gives us that the facet of $P$ with the same inequality is combinatorially isomorphic to $F_i\times Q$.

\item For each $j\in \{1,\dots,m\}$, if $G_j$ is the facet of $Q$ corresponding to the inequality $t_i\cdot y \le a_j + \phi_j(x)$, induction on the dimension (replacing $\R^n$ with the hyperplane $H_j\cong \R^{n-1}$ containing $G_j$) gives us that the facet of $P$ with the same inequality is combinatorially isomorphic to $B\times G_i$. In the induction step we are using now that since $Q_x$ is normally isomorphic to $i(Q)$, hence combinatorially equivalent to $Q$. 
\end{itemize}

For part (ii) suppose that $P$ is a  bundle and, by a change of bases, assume that $i$ and $\pi$ are the standard inclusion and projection. Let $x_0$ be a sufficiently generic point in $B$. Since $\pi^{-1}(x_0) \cap P$ is normally isomorphic to $i(Q)= \{0\}\times Q$, its normal vectors are $t_1,\dots,t_m$ (where we make the canonical identification $(\R^{k+n})^* =  (\R^k)^* + (\R^n)^*$).

Now, the fact that $x_0$ is generic and $P$ combinatorially isomorphic to $B\times Q$ with $\pi(Q)=B$ implies that the $j$-th facet of $\pi^{-1}(x_0) \cap P$ is contained in the facet of $P$ corresponding to $B\times F_j$, where $F_j$ is the $j$-th facet of $Q$. Hence, the facets of $P$ of that form have normal vectors of the form $t_j + s_j$, with $s_j \in (\R^k)^*$ and assumed primitive. That is, the facet  description of $P$ is
\[
	P:=\left\{ (x,y)\in \R^k\times \R^n\,|\, 
	\begin{array}{ll}
	u_i\cdot x \le b_i,& i =1,\dots, l \\ 
	t_j\cdot y  + s_j\cdot x \le a'_j , &j =1,\dots, m
	\end{array}
	\right\},
\]
for some $a'_1,\dots,a'_m\in \R$. This coincides with the statement, defining 
\[
\phi_j(x) = -s_j\cdot x +(a'_j-a_j).
\qedhere
\]
\end{proof}

\begin{remark}
\label{rem:zero-in-B}
The $Q$ in Definition~\ref{defbundle} is only important modulo normal equivalence. If $B$ contains the origin then we  assume, without loss of generality, that $i(Q) = \pi^{-1}(0)\cap P$. This fixes $Q$ (since $i:\R^k\to \pi^{-1}(0)$ is a linear isomorphism).
In this case the affine functions $\phi_j$ in the statement of Proposition~\ref{prop:bundle} become linear.
\end{remark}

We say that a linear map $f: \R^n\to \R^m$ is \emph{unimodular} if it respects the lattice in the following sense: If $\mathcal B$ is a unimodular basis of $\R^n$ that contains a unimodular basis of $\ker(f)$ then $f(\mathcal B)$ extends to a unimodular basis of $\R^m$, where $\{e_i\}_{i=1}^n$ denotes the standard basis in $\R^n$.
Put differently, modulo unimodular transformations in $\R^n$ and $\R^m$, $l$ is the linear map sending $e_i\mapsto e_i$ if $i\le \rank(f)$ and $e_i\mapsto 0$ otherwise.
(Observe that, in particular, $l(\Z^n)=l(\R^n)\cap \Z^m$).
For the rest of the paper all bundles are assumed unimodular, meaning that the linear maps $i$ and $\pi$ in the definition are unimodular.

Proposition \ref{prop:bundle} easily implies the following. The `only if' part is \cite[Lemma 5.2]{McDuff-probes}:

\begin{theorem}
\label{monbundle}
	Let $P$ be a bundle with base $B$ and fiber $Q$. Then:
	\begin{enumerate}
	\item $P$ is simple if and only if both $B$ and $Q$ are simple.
	\item $P$ is smooth if and only if both $B$ and $Q$ are smooth.
	\item $P$ is monotone if and only if both $B$ and $Q$ are monotone.%
	\footnote{Both sides of this statement imply $0\in B$, and in this statement we are implicitly assuming (as mentioned in Remark~\ref{rem:zero-in-B}), that $i(Q)=\pi^{-1}(0) \cap P$.
	}
	\end{enumerate}
\end{theorem}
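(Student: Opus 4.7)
My plan is to work in the canonical coordinates afforded by Proposition~\ref{prop:bundle}. The bundle being unimodular, after suitable changes of basis in $\R^k$, $\R^n$, $\R^{k+n}$ I may assume that $i$ and $\pi$ are the standard inclusion/projection and, whenever the origin sits in $B$ (as it must in parts (iii) and as I may freely arrange in (i)--(ii) by translating), that $\pi^{-1}(0)\cap P=i(Q)$, which forces the $\phi_j$'s to be linear. Thus
\[
P=\{(x,y)\in\R^{k+n}\,:\,u_i\cdot x\le b_i\text{ for }i\le l;\ t_j\cdot y+s_j\cdot x\le a_j\text{ for }j\le m\}.
\]
Each of (i), (ii), (iii) is then a local check at an arbitrary vertex $(v,w)$ of $P$, where $v$ is a vertex of $B$ and $w$ a vertex of the fiber $Q_v$.

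For part (i) I will use that $P\cong B\times Q$ combinatorially: the $k+n$ tight facet-inequalities at $(v,w)$ split as $k$ inequalities from the $B$-block (tight at $v$) and $n$ from the fiber block (tight at $w$). Hence $P$ is simple iff $v$ is a simple vertex of $B$ and $w$ is a simple vertex of $Q_v\cong Q$, i.e.\ iff both $B$ and $Q$ are simple.

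Part (ii) then becomes a determinant computation. Assuming simplicity, the primitive edge vectors at $(v,w)$ form a matrix of block-triangular shape
\[
M=\begin{pmatrix}E_B & 0 \\ F & E_Q\end{pmatrix},
\]
where the zero block reflects that the $n$ fiber edges at $w$ project to $0$ in $B$, $E_B$ is the edge matrix of $B$ at $v$, $E_Q$ is the edge matrix of $Q_v$ at $w$ (which equals that of $Q$ at the corresponding vertex because $Q_v$ is normally isomorphic to $Q$), and $F$ records the fiber components of lifts of $B$-edges. Since $\det M=\det E_B\cdot\det E_Q$ and $F$ is integer, $M$ is unimodular iff both $E_B$ and $E_Q$ are.

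For part (iii), once (ii) is in hand I only need to compare reflexivity. The $\Rightarrow$ direction is easy: $0\in\Int(P)$ projects to $0\in\Int(B)$ and restricts in the fiber to $0\in\Int(Q)$; the $B$-block inequalities appear verbatim in $P$ forcing $b_i=1$, and restricting the mixed inequalities to $x=0$ forces $a_j=1$. Conversely, when $B$ and $Q$ are reflexive, the mixed normals $(s_j,t_j)$ are primitive because $t_j$ already is, and the right-hand sides equal $a_j+\phi_j(0)=1$, so $P$ is reflexive. The most delicate point I foresee is checking that $s_j\in\Z^k$, so that $(s_j,t_j)$ is a genuine lattice normal rather than merely rational; this is where unimodularity of the bundle really enters, but it should reduce to bookkeeping once the canonical change of bases is tracked carefully.
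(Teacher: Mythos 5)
Your proof follows essentially the same route as the paper's: canonical coordinates from Proposition~\ref{prop:bundle}, the combinatorial product argument for (i), a block-triangular determinant for (ii) (the paper computes with the facet normals $(u_i,0)$ and $(s_j,t_j)$ rather than with the edge vectors, but the two calculations are dual and identical in substance), and comparison of the right-hand sides for (iii). The integrality of the $s_j$ that you flag as the delicate point is in fact passed over silently in the paper's own proof, so your treatment is, if anything, slightly more explicit there.
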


\begin{proof}
Since being simple is a property of the combinatorial type, part (i) follows from the fact that a Cartesian product of polytopes is simple if and only if the factors are simple.

In part (ii), we assume without loss of generality that the product is in the canonical coordinates of Proposition \ref{prop:bundle}. In particular, the collection of normal vectors of $P$ is
\[
\{(u_1,0), \dots, (u_i,0), (t_1, s_1), \dots (s_j, t_j)\}.
\]
$P$ is smooth if, and only if, the subsets of normal vectors corresponding to facets meeting at each vertex of $P$ form matrices of determinant $\pm 1$. Observe that such matrices have shape
\[
\begin{pmatrix}
U&0\\
S&T\\
\end{pmatrix},
\]
where $U$ and $T$ are square matrices (corresponding to the facets of $B$ and $Q$ that meet at the corresponding vertices), so the determinant is the product of those of $U$ and $T$. Hence, the matrix is unimodular if and only if $U$ and $T$ are unimodular. That is, all normal cones at vertices of $P$ are unimodular if and only if all normal cones at vertices of $B$ and $Q$ are unimodular. This proves (ii).

For part (iii), the assumption $i(Q)=\pi^{-1}(x) \cap P$ makes the functions $\phi_j$ of Proposition \ref{prop:bundle} linear so, in the notation of the proof, $a'_j=a_j$. Hence, the inequality description of $P$ is
\[
	P:=\left\{ (x,y)\in \R^k\times \R^n\,|\, 
	\begin{array}{ll}
	u_i\cdot x \le b_i,& i =1,\dots, l \\ 
	t_j\cdot y  + s_j\cdot x \le a_j , &j =1,\dots, m
	\end{array}
	\right\}.
\]
Since, by part (ii), we can assume that $P$, $B$ and $Q$ are smooth, $P$ being monotone is equivalent to all $a_i$'s and $b_j$'s being equal to $1$, which in turn is equivalent to $B$ and $Q$ being monotone.
\end{proof}

We can say the following for the Ewald conditions of fiber bundles.

\begin{proposition}
\label{prop:bundle-fiber}
Let $P$ be a monotone bundle with fiber $Q$ and base $B$. Then
		$i(\mathcal{E}(Q)) \subset \mathcal{E}(P).$
		In particular,  
		\[
		\mathcal{E}(Q)\ne\{0\} \Rightarrow \mathcal{E}(P)\ne\{0\}.
		\]
\end{proposition}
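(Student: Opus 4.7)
The plan is to work directly with the canonical coordinate description of a bundle from Proposition~\ref{prop:bundle}, exploiting the simplifications that monotonicity allows. Since $P$ is monotone, Theorem~\ref{monbundle} gives that both $B$ and $Q$ are monotone; in particular $0\in B$, so by Remark~\ref{rem:zero-in-B} we may take $i(Q)=\pi^{-1}(0)\cap P$, the $\phi_j$ become linear, and all the right-hand sides $a_j,b_i$ equal $1$. The inequality description becomes
\[
P=\left\{(x,y)\in\R^k\times\R^n\,\Big|\,u_i\cdot x\le 1,\ t_j\cdot y+s_j\cdot x\le 1\right\}.
\]

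Next, I would take any $y\in\mathcal{E}(Q)$ and check that $i(y)=(0,y)$ lies in $\mathcal{E}(P)$. Both $y$ and $-y$ satisfy $t_j\cdot y\le 1$, and the inequalities involving $x$ are trivially satisfied at $x=0$. Hence $(0,y)$ and $(0,-y)=-(0,y)$ belong to $P$. Since $i$ is a unimodular linear map, $i(y)\in\Z^{k+n}$, so $i(y)\in\mathcal{E}(P)$. Equivalently, one may argue abstractly: $i(y),i(-y)=-i(y)\in i(Q)=\pi^{-1}(0)\cap P\subset P$ and $i(y)\in i(\Z^n)\subset\Z^{k+n}$.

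For the ``in particular'' clause, if there is a nonzero $y\in\mathcal{E}(Q)$, then $i(y)\in\mathcal{E}(P)$ is also nonzero since $i$ is injective; hence $\mathcal{E}(P)\ne\{0\}$. The entire argument is essentially a verification once the canonical-coordinate form is in place, so there is no genuine obstacle; the only care needed is to invoke the right normalization (monotonicity making the $\phi_j$ linear with $a_j=1$) so that the fiber over $0\in B$ is literally $i(Q)$ rather than merely normally isomorphic to it.
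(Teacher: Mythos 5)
Your proof is correct and follows essentially the same route as the paper's: the paper simply observes that $x\in i(\mathcal{E}(Q))$ implies $x,-x\in i(Q)=\pi^{-1}(0)\cap P\subset P$, which is exactly the ``abstract'' version of your argument. The explicit verification in canonical coordinates is a fine (if slightly more verbose) way of saying the same thing.
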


\begin{proof}
		If $x\in i(\mathcal{E}(Q))$ then $x\in i(Q)\subset P$ and $-x\in i(Q)\subset P$. Hence $x\in \mathcal{E}(P)$.
\end{proof}

\subsection{Neat polytopes and Ewald's Conjecture} \label{s3b}

As pointed out by McDuff~\cite[Remark 5.7 and text above Proposition 1.4]{McDuff-probes} it seems likely that the total space of a monotone fiber bundle is star Ewald when the base and the fiber are star Ewald. She
proves the case when the fiber (which is called base there) is a simplex  \cite[Proposition 1.4 and Corollary 5.6]{McDuff-probes}. 

Here, we show that this holds for every base and fiber as long as the fiber is neat.
As mentioned in Section~\ref{sec:results} it is quite plausible that \emph{all} monotone polytopes have this property (Conjecture~\ref{conj:pm}) since a counter-example to this would violate both Ewald's Conjecture~\ref{ec} and Oda's Conjecture~\ref{conj:Oda}.
Hence, in view of Theorem~\ref{Oebro}, if such an example
exists, it has at least dimension $8$.

The following two results establish the  relations between the concept of neat polytope and the bundle operation on polytopes.
They are somehow similar to Proposition 5.3 and Lemma 5.4 in \cite{McDuff-probes}, except we state them for arbitrary smooth lattice polytopes and with respect to both the star Ewald and weak Ewald condition, while McDuff considers only monotone polytopes and the srat Ewald condition.

\begin{theorem}\label{thm:pmsym}
	For a lattice smooth polytope $Q$ the following properties are equivalent:
\begin{enumerate}
\item For every lattice smooth bundle $P$ with fiber $Q$ and base $[-1,1]$, we have that 
\[
\mathcal{E}(P) \not\subset  \{0\}\times Q.
\]
\item $Q$ is  neat.
\item  For every lattice smooth bundle with fiber $Q$ and an arbitrary base $B$, every point of $\mathcal{E}(B)$ can be lifted to a point in $\mathcal{E}(P)$.
\end{enumerate}	
\end{theorem}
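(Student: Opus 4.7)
My plan is to prove the cycle (ii)$\Rightarrow$(iii)$\Rightarrow$(i)$\Rightarrow$(ii). The common machinery is the canonical coordinate representation for bundles from Proposition~\ref{prop:bundle}: writing $Q=\{y\in\R^n\,|\, t_j\cdot y\le a_j,\ j=1,\dots,m\}$, and assuming (as we may by Remark~\ref{rem:zero-in-B}) that $0\in B$, a bundle $P$ with fiber $Q$ and base $B$ takes the form
\[
P=\{(x,y)\in\R^k\times\R^n\,|\, u_i\cdot x\le c_i,\ t_j\cdot y+s_j\cdot x\le a_j\},
\]
and the fiber $\pi^{-1}(x)\cap P$ is identified with $Q_{-Sx}$ in the displacement notation of Definition~\ref{def:pm}, where $S\in\Z^{m\times k}$ is the matrix whose rows are the $s_j$.

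For (ii)$\Rightarrow$(iii), given $p\in\mathcal{E}(B)$ I would set $b:=-Sp\in\Z^m$, so that $\pi^{-1}(p)\cap P=Q_b$ and $\pi^{-1}(-p)\cap P=Q_{-b}$. Both of these are normally isomorphic to $Q$ by the definition of bundle, so neatness provides a lattice point $y\in Q_b$ with $-y\in Q_{-b}$, and $(p,y)\in\mathcal{E}(P)$ is a lift of $p$. The implication (iii)$\Rightarrow$(i) is immediate, since $1\in\mathcal{E}([-1,1])$ and any lift of $1$ lies in $\mathcal{E}(P)\setminus(\{0\}\times Q)$.

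The interesting direction is (i)$\Rightarrow$(ii). Starting from $b\in\Z^m$ such that both $Q_b$ and $Q_{-b}$ are normally isomorphic to $Q$, I would construct the explicit one-parameter family
\[
P:=\{(x,y)\in\R\times\R^n\,|\, -1\le x\le 1,\ t_j\cdot y\le a_j+b_j x,\ j=1,\dots,m\},
\]
whose fibers over $x=\pm 1$ are exactly $Q_{\pm b}$. Once $P$ is verified to be a lattice smooth bundle with fiber $Q$ and base $[-1,1]$, hypothesis (i) yields a point $(x,y)\in\mathcal{E}(P)$ with $x\ne 0$; integrality and $|x|\le 1$ force $x=\pm 1$, and in either case (after replacing $y$ by $-y$ if needed) I obtain a lattice point $y\in Q_b$ with $-y\in Q_{-b}$, establishing neatness.

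The point that needs the most care is the verification that the polytope $P$ just constructed is genuinely a lattice smooth bundle. Smoothness of $P$ will follow from Theorem~\ref{monbundle}(ii) once base and fiber are known to be smooth, which they are, and its vertices are integral because all defining right-hand sides are integer and the facet normals at each vertex form a unimodular basis. The bundle property, however, requires every intermediate fiber $Q_{xb}$ for $x\in[-1,1]$ to be normally isomorphic to $Q$; this is where I expect the main subtlety, but it follows from the fact that the set of displacements $\beta\in\R^m$ for which $Q_\beta$ preserves the normal fan of $Q$ is convex (each condition that a given facet of $Q$ remains a facet under the displacement $\beta$ is linear in $\beta$), and both $b$ and $-b$ lie in this set by hypothesis, so the entire segment between them does too.
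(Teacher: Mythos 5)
Your proposal is correct and follows essentially the same route as the paper: the same cycle of implications, the same use of the canonical coordinates of Proposition~\ref{prop:bundle} for (ii)$\Rightarrow$(iii), and for (i)$\Rightarrow$(ii) the same one-parameter bundle over $[-1,1]$ interpolating between $Q_b$ and $Q_{-b}$ (the paper writes it as $\conv((\{1\}\times Q_b)\cup(\{-1\}\times Q_{-b}))$, you write it by inequalities). Your explicit verification that the intermediate fibers $Q_{xb}$ stay normally isomorphic to $Q$, via convexity of the chamber of right-hand sides realizing a fixed normal fan, is a detail the paper leaves implicit and is a welcome addition.
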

	
\begin{proof}
We first prove (i)$\Rightarrow$(ii):
	For each $b\in \Z^m$ we have that 
		\[
		P=\conv((  \{1\} \times Q_b) \cup (  \{-1\}\times Q_{-b}))
		\]
		is a fiber product with fiber $Q$ and base $[-1,1]$. By hypothesis, there is $x\in \mathcal{E}(P)$ that is contained in $\{1\}\times Q_+ $, which implies $-x\in \{-1\}\times Q_- .$ Forgetting the first coordinate we obtain the desired point for the definition of neat.

For (ii)$\Rightarrow$(iii), let 
\[
B=\{x\in\R^{k}:Ux\le c_1\}, \qquad Q=\{y\in\R^{n}:Tx\le c_2\}.
\]
By Proposition~\ref{prop:bundle} (with Remark~\ref{rem:zero-in-B} into account), we assume
\[
P=\left\{(x,y)\in\R^{k+n}: \mymatrix{U&0\\S&T}\mymatrix{x\\y}\le\mymatrix{c_1\\c_2}\right\},
\]
for a certain matrix $S$.

		Now let $v\in \mathcal{E}(B)$. We have that 
		\[
		\pi^{-1}(v) = \{v\} \times \{y:Ty\le c_2- Sv\} = \{v\} \times Q_{-Sv},
		\]
		and $Q_{-Sv}$ is normally isomorphic to $Q$.
		If $v\in \mathcal{E}(B)$, the same holds for $-v$: $\pi^{-1}(-v)=\{-v\}\times Q_{Sv}$ and $Q_{Sv}$ is normally isomorphic to $Q$. As $Q$ is neat, there is a
		\[
		w \in \Z^n \cap Q_{-Sv} \cap (-Q_{Sv}),
		\]
		which gives
		\[
		(v,w) \in \mathcal E(P) \cap \pi^{-1}(v).
		\]
		
The implication (iii)$\Rightarrow$(i) is obvious.
\end{proof}

\begin{corollary}\label{cor:pmsym}
	For a lattice smooth polytope $Q$ the following properties are equivalent:
\begin{enumerate}
\item $Q$ is  neat and satisfies the weak (resp.~star) Ewald condition.
\item Every lattice smooth bundle $P$ with fiber $Q$ and base $[-1,1]$ satisfies the weak (resp.~star) Ewald condition.
\item Every lattice smooth bundle $P$ with fiber $Q$ and an arbitrary base $B$ satisfies the weak (resp.~star) Ewald condition whenever $B$ satisfies it.
\end{enumerate}	
\end{corollary}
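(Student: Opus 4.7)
The plan is to derive this corollary by combining Theorem~\ref{thm:pmsym}, which captures the role of neatness, with Proposition~\ref{prop:bundle-fiber} (embedding $\mathcal{E}(Q)$ into $\mathcal{E}(P)$) and Proposition~\ref{prop:cartesian} (Ewald behavior under Cartesian products).

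The implication (iii)$\Rightarrow$(ii) is immediate, as $B=[-1,1]$ is monotone and satisfies every Ewald condition. For (ii)$\Rightarrow$(i), I apply (ii) to the trivial bundle $P=[-1,1]\times Q$; by Proposition~\ref{prop:cartesian}, $Q$ inherits the Ewald condition satisfied by $P$. To deduce neatness of $Q$, I verify condition (i) of Theorem~\ref{thm:pmsym} by showing $\mathcal{E}(P)\not\subset \{0\}\times Q$ for every smooth bundle $P$ over $[-1,1]$ with fiber $Q$: in the weak case, a unimodular basis of $\Z^{n+1}$ cannot lie in the hyperplane $\{0\}\times \R^n$; in the star case, the witness for the facet $\{1\}\times Q$ must lie in $\Star^*(\{1\}\times Q)=\{1\}\times Q$, which is disjoint from $\{0\}\times Q$.

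For (i)$\Rightarrow$(iii), the weak Ewald case is direct: Proposition~\ref{prop:bundle-fiber} gives $(0,q_j)\in \mathcal{E}(P)$ for a unimodular basis $q_1,\dots,q_n$ of $\mathcal{E}(Q)$, while Theorem~\ref{thm:pmsym}(iii) (using neatness of $Q$) lifts a unimodular basis $b_1,\dots,b_k$ of $\mathcal{E}(B)$ to points $(b_i,w_i)\in \mathcal{E}(P)$; the combined block-lower-triangular matrix has determinant $\pm 1$, giving a unimodular basis of $\Z^{k+n}$ in $\mathcal{E}(P)$. The star Ewald case requires a case split on a proper face $f$ of $P$, which the combinatorial isomorphism $P\cong B\times Q$ writes as $f=f_B\times f_Q$. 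Using the canonical coordinates of Proposition~\ref{prop:bundle}, I separate the facets of $P$ into \textbf{base-direction} facets (given by inequalities $u_i\cdot x\le b_i$) and \textbf{fiber-direction} facets (given by $t_j\cdot y+s_j\cdot x\le a_j$). If $f_Q\subsetneq Q$, I apply the star Ewald property of $Q$ to $f_Q$ to obtain $\lambda_Q\in\mathcal{E}(Q)$ and set $\lambda=(0,\lambda_Q)\in i(\mathcal{E}(Q))\subset \mathcal{E}(P)$: since $0\in\Int(B)$ we have $u_i\cdot 0=0<b_i$, so $\lambda$ misses every base-direction facet, and the fiber-direction analysis reduces verbatim to star Ewald of $Q$ at $f_Q$. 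If $f_Q=Q$, no fiber-direction facet contains $f$, so $\Star(f)=\{(x,y)\in P:x\in\Star_B(f_B)\}$ and analogously for $\Star^*(f)$; I apply the star Ewald property of $B$ to $f_B$ to produce $\mu_B$, and use Theorem~\ref{thm:pmsym}(iii) (precisely where neatness of $Q$ enters) to lift it to $(\mu_B,w)\in \mathcal{E}(P)$, which then inherits the desired star property from $\mu_B$.

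The main obstacle is the star Ewald bookkeeping, specifically verifying that $\Star$ and $\Star^*$ inside the bundle $P$ reduce to their analogs in the base or fiber under the two cases. The key observations are that $0\in \Int(B)$ keeps $(0,\lambda_Q)$ away from every base-direction facet, isolating the fiber contribution, and that $f_Q=Q$ keeps $\Star(f)$ free of fiber-direction facets, isolating the base contribution; neatness of $Q$ is then exactly the bridge needed to transfer the base witness from $B$ up to $P$.
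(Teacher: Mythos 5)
Your proposal is correct and follows essentially the same route as the paper: (iii)$\Rightarrow$(ii) trivially, (ii)$\Rightarrow$(i) via the Cartesian product $[-1,1]\times Q$ together with Theorem~\ref{thm:pmsym}, and (i)$\Rightarrow$(iii) by assembling a block-triangular lattice basis in the weak case and by the same $f_Q\subsetneq Q$ versus $f_Q=Q$ case split in the star case, with neatness entering exactly through the lift of Theorem~\ref{thm:pmsym}(iii). You merely spell out a few steps the paper leaves implicit (why the Ewald conditions force $\mathcal{E}(P)\not\subset\{0\}\times Q$, and the $\Star$/$\Star^*$ bookkeeping), which is fine.
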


\begin{proof}
We first look at the weak Ewald condition.
For (i)$\Rightarrow$(iii) consider a lattice basis contained in $\mathcal E(B)$ and lift it to  $\mathcal E(P)$. This gives an independent set that, together with any lattice basis contained in $\{0\} \times \mathcal E(Q)$, gives a lattice basis contained in $\mathcal E(P)$. (iii)$\Rightarrow$(ii) is obvious, and for (ii)$\Rightarrow$(i):
\begin{itemize}
\item The Cartesian product $[-1,1]\times Q$ shows that $Q$ satisfies the weak Ewald (by Proposition \ref{prop:cartesian}).
\item Theorem~\ref{thm:pmsym} shows that $Q$ is neat.
\end{itemize}

For the star Ewald condition, the proofs of (iii)$\Rightarrow$(ii)$\Rightarrow$(i) are the same, so let us prove (i)$\Rightarrow$(iii), which is essentially \cite[Proposition 5.3]{McDuff-probes}.

A face $F$ of $P$ is the product of a face $F_B$ of $B$ and a face $F_Q$ of $Q$. If $F_Q=Q$,
	\[\Star(F_B\times Q)=\Star(F_B)\times Q\]
	so applying the star condition to $F_B$ to get a point in $\mathcal{E}(B)$, and then using Theorem~\ref{thm:pmsym} to lift 
	this point to $\mathcal{E}(P)$, we obtain the desired point. If $$F_Q\subsetneq Q,$$ applying the star condition to it we obtain a point $p$ in $\mathcal{E}(Q)$. The inclusion of $p$ in $P$ gives a point which is contained exactly in the facets $B\times F_Q'$, where $F_Q'$ is a facet containing $p$, which is what we want.
\end{proof}

We finally look at how the neatness of a fiber bundle relates to that of the base and the fiber.

\begin{theorem}\label{pmsym2}
	Let $P$ be a lattice smooth bundle with base $B$ and fiber $Q$. 
	If $B$ and $Q$ are both neat, then $P$ is neat.
\end{theorem}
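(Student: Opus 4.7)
My plan is to work in the canonical coordinates of Proposition \ref{prop:bundle} (with the origin placed in $B$, as in Remark \ref{rem:zero-in-B}) so that
\[
P=\{(x,y)\in\R^{k+n}:Ux\le c_1,\ Sx+Ty\le c_2\},
\]
with $B=\{x:Ux\le c_1\}$ and $Q=\{y:Ty\le c_2\}$. Any displacement of $P$ is then encoded by a vector $d=(d_1,d_2)\in\Z^{l+m}$, where $l$ and $m$ are the numbers of facets of $B$ and $Q$. The resulting polytope $P_d$ equals $\{(x,y):Ux\le c_1+d_1,\ Sx+Ty\le c_2+d_2\}$, and its fiber over a point $x\in\R^k$ is $\{x\}\times Q_{d_2-Sx}$. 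I would then split the task of finding a lattice point in $P_d\cap(-P_{-d})$ into two successive applications of neatness: first for $B$ to choose the base coordinate, and then for $Q$ to choose the fiber coordinate.

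Assume $P_d$ and $P_{-d}$ are normally isomorphic to $P$. The first step is to observe that this forces the bundle structure of $P$ to persist: the base of $P_d$ is $B_{d_1}$ (normally isomorphic to $B$) and for every $x\in B_{d_1}$ the fiber $Q_{d_2-Sx}$ is normally isomorphic to $Q$, with the analogous statement for $P_{-d}$. Granted this, neatness of $B$ applied to $d_1$ yields $x_0\in\Z^k$ with $x_0\in B_{d_1}$ and $-x_0\in B_{-d_1}$. Setting $e:=d_2-Sx_0\in\Z^m$, the fibers of $P_d$ over $x_0$ and of $P_{-d}$ over $-x_0$ are $Q_e$ and $Q_{-e}$ respectively, both normally isomorphic to $Q$. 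Neatness of $Q$ applied to $e$ now produces $y_0\in\Z^n$ with $y_0\in Q_e$ and $-y_0\in Q_{-e}$, and then $(x_0,y_0)\in P_d$ and $-(x_0,y_0)\in P_{-d}$, establishing neatness of $P$.

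The technical heart of the argument is the claim that $P_d$ inherits the bundle structure whenever it is normally isomorphic to $P$, since only then are the fibers $Q_{d_2-Sx}$ guaranteed to be normally isomorphic to $Q$ for every $x$ in $B_{d_1}$ (rather than collapsing or losing facets as the displacement pushes $x$ towards the boundary of $B_{d_1}$). I would prove this by combining the canonical description of bundles in Proposition \ref{prop:bundle}(ii) with the fact that a common normal fan forces the same combinatorial incidences between the ``base facets'' (those with normals of the form $(u_i,0)$) and the ``fiber facets'' (those with normals $(s_j,t_j)$); the projection $\pi$ along the fiber direction then still exhibits $P_d$ as a bundle with the expected base $B_{d_1}$ and fibers $Q_{d_2-Sx}$. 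All the rest of the proof is a direct verification.
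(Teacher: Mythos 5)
Your proposal is correct and follows essentially the same route as the paper: pass to the canonical block-triangular coordinates of Proposition \ref{prop:bundle}, apply neatness of $B$ to the base component $d_1$ of the displacement to obtain $x_0$, observe that the fibers of $P_d$ over $x_0$ and of $P_{-d}$ over $-x_0$ are $Q_{e}$ and $Q_{-e}$ with $e=d_2-Sx_0$, and apply neatness of $Q$ to $e$. The paper likewise asserts (without a detailed verification) that normal isomorphism of $P_d$ with $P$ forces the displaced base and all displaced fibers to remain normally isomorphic to $B$ and $Q$, so your flagging of this as the technical point is consistent with, and if anything slightly more careful than, the published argument.
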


\begin{proof}
	Let $$P=\{(x,y)\in\R^{k+n}:Ax\le c\}$$ and $$P_b=\{(x,y)\in\R^{k+n}:Ax\le c+b\}.$$
	We may assume, as in the proof of Theorem~\ref{thm:pmsym} that
	\[A=\begin{pmatrix}
	U & 0 \\ S & T
	\end{pmatrix},
	\quad
	c=\begin{pmatrix}
		c_1 \\ c_2
	\end{pmatrix},
	\quad
	b=\begin{pmatrix}
	b_1 \\ b_2
	\end{pmatrix}\]
	where
	\[
	B=\{x\in\R^{k}:Ux\le c_1\}, \qquad Q=\{y\in\R^{n}:Tx\le c_2\}.
	\]
	
	The equations for $P_b$ (and the fact that it is normally isomorphic to $P$) imply that $P_b$ is
	a bundle with base \[B_{b_1}=\pi(P_b)=\{x\in\R^k:Ux\le c_1+b_1\}\] and fiber \[Q_{b_2}=\pi^{-1}(0)\cap P_b=\{y\in\R^n:Ty\le c_2+b_2\}\] (and that the new base and fiber are normally isomorphic to the original ones). 
	The same statements hold for $P_{-b}$, $B_{-b_1}$ and $Q_{-b_2}$.
	
	By the hypothesis applied to $B$, we obtain a point $v\in B_{b_1}$ with $-v\in B_{-b_1}$. Now, the fibers 
	\[
	\pi^{-1}(v)\cap P_b=\{y\in\R^n:Ty\le c_2+b_2-Sv\} =Q_{b_2-Sv}
	\] 
	and 
	\[
	\pi^{-1}(-v)\cap P_{-b}=\{y\in\R^n:Ty\le c_2-b_2+Sv\} =Q_{-b_2+Sv}
	\] 
	are also normally isomorphic to $Q$. Applying  the hypothesis to $Q$, we get $$w\in\pi^{-1}(v)\cap P_b$$ with $$-w\in\pi^{-1}(-v)\cap P_{-b},$$ so $w\in P_b$ with $-w\in P_{-b}$, and we are done.
\end{proof}

\subsection{Simplex-segment bundles}
\label{subsec:ssb}

We here study in detail the bundles with base a simplex and fiber a segment.

\begin{definition}[Simplex-segment bundle, $\SSB(n,k)$]
\label{def-ssb}
\label{ssbdef}
A bundle with base the monotone simplex and fiber the monotone segment  is called an \SSB\ (short for \emph{simplex-segment bundle}). 
More concretely, for $n,k\in\N$ with $n\ge 2$ and $0\le k\le n-1$ we denote $\SSB(n,k)$ the \SSB\  with the following inequality description:
	\begin{eqllave}
		x_i & \ge -1 \quad \forall i\in\{1,2,\ldots,n\}; \\
		x_1 & \le 1; \label{SSB} \\
		kx_1+x_2+\ldots+x_n & \le 1.
	\end{eqllave}
Its vertices are
\[(-1,-1,\ldots,-1), (-1,n-1+k,\ldots,-1), \ldots, (-1,\ldots,-1,n-1+k),\] \[(1,-1,\ldots,-1), (1,n-1-k,-1,\ldots,-1), \ldots, (1,-1,\ldots,-1,n-1-k).\]
\end{definition}

\begin{proposition}\label{prop-SSB}
	Every \SSB\ of dimension $n\ge 2$ is equivalent to $\SSB(n,k)$ for some $k$ with $0\le k\le n-1$. 
\end{proposition}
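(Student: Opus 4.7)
The plan is to bring an arbitrary SSB $P$ into canonical coordinates via Proposition~\ref{prop:bundle}, then apply explicit unimodular transformations to reduce to the form $\SSB(n,k)$.

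By Theorem~\ref{monbundle}(iii), $P$ is monotone, and after a unimodular change of basis we may assume the bundle projection $\pi$ is projection onto $x_1$, that $\pi(P)=[-1,1]$, and (by Remark~\ref{rem:zero-in-B}) that the fiber $\pi^{-1}(0)\cap P$ equals the standard monotone simplex $\Delta_{n-1}$ sitting in the hyperplane $x_1=0$. Proposition~\ref{prop:bundle}, together with the fact that the primitivity of the facet normals of $P$ forces the off-diagonal shear parameters to be integers (the fiber-facet normals of $\Delta_{n-1}$ being already primitive), then presents $P$ by the inequalities
\[
-x_1\le 1,\ \ x_1\le 1,\ \ -x_{i+1}+\alpha_i x_1\le 1\ \ (1\le i\le n-1),\ \ \sum_{j=2}^n x_j+\beta x_1\le 1,
\]
for certain integers $\alpha_1,\ldots,\alpha_{n-1},\beta$.

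I then apply the unimodular shear $T\colon (x_1,\ldots,x_n)\mapsto (x_1,x_2-\alpha_1 x_1,\ldots,x_n-\alpha_{n-1}x_1)$. It converts each $-x_{i+1}+\alpha_i x_1\le 1$ into $x_{i+1}\ge -1$ and the last inequality into $\sum_{j=2}^n x_j + k\, x_1\le 1$ with $k:=\beta+\sum_{i}\alpha_i\in\Z$, placing $P$ unimodularly in the form $\SSB(n,k)$. If $k<0$, composing with the reflection $x_1\mapsto -x_1$ (which swaps the two facets $\pm x_1\le 1$ and negates $k$) produces $\SSB(n,-k)$, so we may assume $k\ge 0$. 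The upper bound $k\le n-1$ follows from the bundle axiom at $x_1=1$: the fiber $\pi^{-1}(1)\cap P$ is required to be normally isomorphic to $\Delta_{n-1}$, but in $\SSB(n,k)$ it is $\{(x_2,\ldots,x_n):x_j\ge -1,\ \sum_{j=2}^n x_j\le 1-k\}$, which is a non-degenerate $(n-1)$-simplex exactly when $1-k>-(n-1)$, i.e., when $k\le n-1$.

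The main point requiring care is showing that each of the reductions (the preliminary change of basis to canonical form, the shear, and the reflection) is genuinely unimodular, and that smoothness of $P$ combined with Proposition~\ref{prop:bundle} forces the parameters $\alpha_i,\beta$ to be integers rather than arbitrary rationals. Beyond these verifications the argument is an elementary linear-algebraic normalization.
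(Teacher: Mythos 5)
Your proof is correct and follows essentially the same route as the paper's: normalize via Proposition~\ref{prop:bundle} so that the fiber over $x_1=0$ is the standard monotone simplex, shear away the integer off-diagonal parameters by a unimodular map, reflect $x_1\mapsto -x_1$ to force $k\ge 0$, and bound $k\le n-1$ by the bundle condition. Your explicit verifications (integrality of the shear parameters from primitivity of the facet normals, and the nondegeneracy of the fiber over $x_1=1$ giving $k\le n-1$) just spell out details the paper leaves implicit.
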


\begin{proof}
	We may assume, without loss of generality, that $\pi$ is the projection that sends the polytope to $x_1$. This implies that $x_1\ge -1$ and $x_1\le 1$ are facets.
	
	Moreover, $\pi^{-1}(0)$ is the fiber, which in this case is a monotone simplex:
	\begin{llave}
		x_i & \ge -1 \quad \forall i\in\{2,\ldots,n\}; \\
		x_2+\ldots+x_n & \le 1.
	\end{llave}
	By Proposition~\ref{prop:bundle}, our bundle has the form
	\begin{llave}
		-1\le x_1 & \le 1; \\
		a_ix_1+x_i & \ge -1 \quad \forall i\in\{2,\ldots,n\}; \\
		kx_1+x_2+\ldots+x_n & \le 1.
	\end{llave}
	where $a_i$ and $k$ are integers. By the coordinate change 
	$$x_i \mapsto x_i-a_ix_1, i\ge 2$$ 
	(which is unimodular), we obtain the bundle in the form \eqref{SSB}. For this to be actually a bundle, we need $|k|\le n-1$. If $k<0$, we can change $k$ to $-k$ by the cordinate change
	$$x_1 \mapsto -x_1,$$
	so we may assume that $0\le k\le n-1$.
\end{proof}

This gives $n$ different bundles. For $k=0$ we recover the Cartesian product. See Figure \ref{fig-SSB} for the three \SSB\ in dimension $3$.

\begin{figure}[h]
	\includegraphics[width=0.24\linewidth,trim=3cm 0 3cm 0,clip]{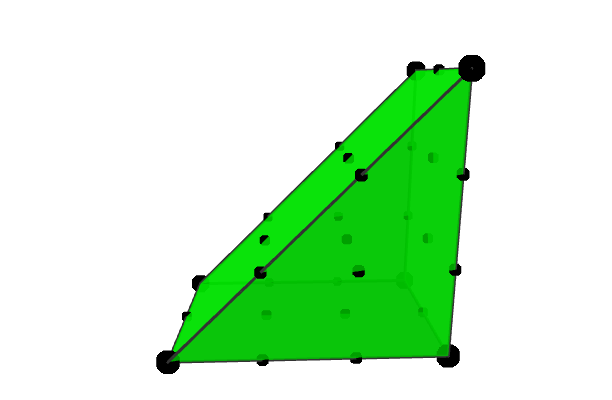}
	\includegraphics[width=0.24\linewidth,trim=3cm 0 3cm 0,clip]{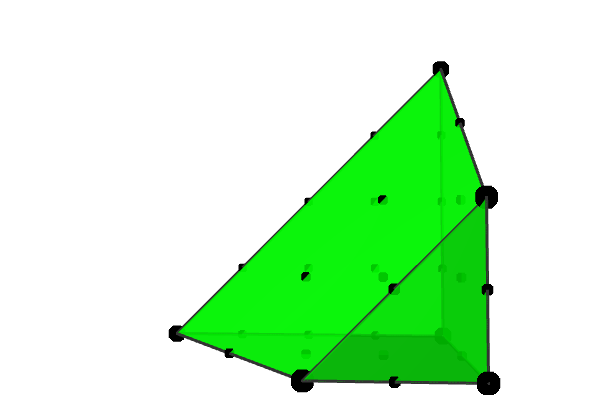}
	\includegraphics[width=0.24\linewidth,trim=3cm 0 3cm 0,clip]{ssb32}
	\caption{From left to right: $\SSB(3,0)$, $\SSB(3,1)$, $\SSB(3,2)$.}
	\label{fig-SSB}
\end{figure}

\begin{proposition} \label{prop-ut-ssb}
	$\SSB(n,k)$ is \UT-free if and only if $n=2$ or $k\le n-2$. It is deeply monotone if and only if $k\le 1$.
\end{proposition}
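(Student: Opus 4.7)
The plan is to treat the two characterizations separately, in both cases working from the inequality description and vertex list in Definition~\ref{def-ssb} together with the combinatorial identification $\SSB(n,k)\cong[-1,1]\times\Delta_{n-1}$.

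\textbf{\UT-free part.} If $n=2$ the only $2$-face of $\SSB(2,k)$ is the whole quadrilateral, so \UT-freeness is automatic. For $n\ge 3$, the combinatorial product structure classifies every $2$-face of $\SSB(n,k)$ as either a quadrilateral $[-1,1]\times e$ (with $e$ an edge of $\Delta_{n-1}$) or a triangular sub-face of one of the two simplicial facets $\{x_1=\pm 1\}$. A direct check from the vertex coordinates shows that $\{x_1=-1\}\cap\SSB(n,k)$ is unimodularly equivalent to $(n+k)\delta_{n-1}$ and $\{x_1=1\}\cap\SSB(n,k)$ to $(n-k)\delta_{n-1}$. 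Every triangular $2$-face of a dilated smooth simplex $m\delta_{n-1}$ has all three edges of primitive lattice length $m$, hence is unimodular iff $m=1$. Since $n+k\ge 2$ always, $\{x_1=-1\}$ never contributes a unimodular triangle, while $\{x_1=1\}$ does so exactly when $n-k=1$, i.e.\ $k=n-1$, which yields the claim.

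\textbf{Deeply monotone part.} By the $S_{n-1}$-symmetry permuting the coordinates $x_2,\dots,x_n$, it suffices to examine the corner parallelepiped at four representative vertices: $v_0^-=-\one$ and $v_2^-=-\one+(n+k)e_2$ on $\{x_1=-1\}$, and $v_0^+=e_1-\sum_{j\ge 2}e_j$ and $v_2^+=e_1+(n-1-k)e_2-\sum_{j\ge 3}e_j$ on $\{x_1=1\}$. The decisive ingredient is the list of primitive edge directions: the horizontal edges inside each simplicial facet have the standard simplex directions $e_i$ or $e_i-e_j$, whereas the vertical edge joining $v_i^-$ to $v_i^+$ has primitive direction $e_1-ke_i$ (and the reverse at $v_i^+$), carrying the $k$-twist of the bundle. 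Substituting each parallelepiped into the defining inequalities of $\SSB(n,k)$ reduces everything to one-line linear arithmetic: (a) at $v_0^-$ and $v_2^-$ every inequality holds for every admissible $k$; (b) at $v_0^+$ the simplex inequality $kx_1+x_2+\cdots+x_n\le 1$ attains its maximum $k$ on the parallelepiped, so containment forces $k\le 1$; (c) at $v_2^+$ the ``downward'' directions $-e_2$ and $e_j-e_2$ drive $x_2$ to minimum $-k$ on the parallelepiped (the vertical direction $ke_2-e_1$ can only push $x_2$ up), so the half-space $x_2\ge -1$ forces $k\le 1$. Conversely, each check passes whenever $k\le 1$, giving the equivalence.

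The main obstacle is simply writing down the primitive edge directions correctly; in particular noticing that the vertical edges point in direction $e_1-ke_i$ rather than just $e_1$ is precisely what causes deep monotonicity to fail for $k\ge 2$. Once the edges are in hand the four containment checks are routine.
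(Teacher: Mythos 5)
Your proof is correct. The \UT-free half is essentially the paper's argument: both identify the $2$-faces via the combinatorial product structure as quadrilaterals plus triangular faces of the two simplicial facets of sizes $n+k$ and $n-k$, and conclude that a unimodular triangle occurs exactly when $n\ge 3$ and $n-k=1$. For the deeply monotone half you take a genuinely different route. The paper argues necessity structurally (deep smoothness is inherited by faces, and a smooth $(n-1)$-simplex of size $n-k$ is deeply smooth iff $n-k\ge n-1$) and proves sufficiency by induction on $n$, identifying the first displacements of the facets of $\SSB(n,k)$ as a monotone simplex or $\SSB(n-1,k)$. You instead verify the corner-parallelepiped definition directly at the four $S_{n-1}$-representative vertices, using the explicit primitive edge directions (in particular the twisted vertical direction $e_1-ke_i$); I checked your linear-arithmetic claims (a)--(c) and they are right, e.g.\ at $v_0^+$ the extreme corner $(1,0,\dots,0)$ violates $kx_1+\sum_{i\ge2}x_i\le 1$ exactly when $k\ge 2$, and at $v_2^+$ the corner $(1,-k,0,\dots,0)$ violates $x_2\ge-1$ exactly when $k\ge 2$. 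Your computation is self-contained and does not invoke Theorem~\ref{thm:deeply}, Corollary~\ref{coro:deeply-faces}, or the characterization of deeply smooth dilated simplices, at the cost of more explicit coordinate bookkeeping; the paper's argument is shorter but leans on that machinery and leaves the inductive sufficiency step rather terse.
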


\begin{proof}
An \SSB\ is combinatorially the product of a simplex and a segment, with its two facets that are simplices having sizes $n+k$ and $n-k$. Hence, it has unimodular triangles if and only if ($n\ge 3$ and) $n-k=1$, that is, $k=n-1$.

A smooth $n$-simplex is deeply monotone if and only if its size is at least $n$. Since deep monotonicity is preserved under taking faces for $\SSB(n,k)$ to be deeply monotone we need $n-k\ge n-1$, that is, $k\le 1$. Sufficiency follows by induction on $n$, observing that the first displacements of the facets of $\SSB(n,k)$, for $k\le 1$, are the monotone simplex (for the facets that are simplices) and the rest give a monotone segment, if $n=2$, and $\SSB(n-1,k)$, if $n\ge 3$ and $k<n-1$.
\end{proof}

We can also check the Ewald properties for these polytopes:

\begin{lemma}\label{simplex-ewald}
	The monotone simplex satisfies the weak, strong and star Ewald conditions, and it is neat.
\end{lemma}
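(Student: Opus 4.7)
The plan is to verify each of the four claims by direct computation on the explicit inequality description
\[
\Delta_n=\{x\in\R^n: x_i\ge -1\ \forall i,\ \textstyle\sum_i x_i\le 1\},
\]
whose facets are $F_0:=\{\sum x_i=1\}$ and $F_i:=\{x_i=-1\}$ for $i=1,\ldots,n$. First I observe that $\pm e_i\in\Delta_n$ for every $i$ (both coordinates range over $\{-1,0,1\}$ and the sums are $\pm 1$), so $\{e_1,\ldots,e_n\}\subset\mathcal{E}(\Delta_n)$ is the required unimodular basis for the weak condition.

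For the strong condition I would exhibit a unimodular basis inside $\mathcal{E}(\Delta_n)\cap F$ for each facet $F$. On $F_0$ the basis $\{e_1,\ldots,e_n\}$ works immediately since $\sum(e_i)=1$. On $F_i$ I propose the basis $\{-e_i\}\cup\{e_j-e_i: j\ne i\}$; its elements are easily checked to be Ewald points contained in $F_i$, and a row reduction recovers the standard basis. For the star condition I would parametrize faces by the set $S\subseteq\{0,1,\ldots,n\}$ of facet indices containing them and split into two cases: if $0\in S$, pick any $i\in\{1,\ldots,n\}\setminus S$ and take $\lambda=e_i$, which lies on $F_0$ (in $\Star^*$) and whose negative fails to lie on any facet containing the face; if $0\notin S$, pick any $j\in S$ and take $\lambda=-e_j$, which lies on $F_j$ alone among the facets containing the face, while $e_j$ is in none of them. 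No computation is delicate because the $\pm 1$ entries make the membership checks trivial.

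For neatness I would write out $(\Delta_n)_b=\{x_i\ge -1-b_i,\ \sum x_i\le 1+b_{n+1}\}$ and compute the vertex opposite to $F_j$ to see that $(\Delta_n)_b$ is normally isomorphic to $\Delta_n$ if and only if
\[
s:=b_1+\cdots+b_n+b_{n+1}\ge -n,
\]
and analogously $(\Delta_n)_{-b}$ forces $s\le n$. Combining the inequalities for $x\in(\Delta_n)_b$ and $-x\in(\Delta_n)_{-b}$ gives $x_i\in[-1-b_i,1-b_i]$ and $\sum x_i\in[-1+b_{n+1},1+b_{n+1}]$. Substituting $y_i:=x_i+b_i\in\{-1,0,1\}$ reduces the problem to finding $y\in\{-1,0,1\}^n$ with $\sum y_i\in[s-1,s+1]$, which is obviously solvable whenever $|s|\le n$ since every integer in $[-n,n]$ is realized as some $\sum y_i$.

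I do not anticipate a real obstacle: each verification is a finite and elementary check once the inequality description and the $\pm 1$ structure of the Ewald set are exploited. The only step that requires a brief argument beyond membership is identifying the correct range for $s$ in the neatness proof, which follows from the location of the single vertex of $(\Delta_n)_b$ not lying on the facet parallel to $F_0$.
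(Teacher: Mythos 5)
Your proof is correct, but it takes a genuinely different route from the paper's. The paper dispatches the three Ewald conditions in one line by observing that the monotone simplex is deeply monotone (it is $(n+1)\delta_n$ up to translation, and $k\delta_n$ is deeply smooth exactly when $k\ge n$) and invoking Theorem~\ref{thm:deep-ewald}; for neatness it invokes the bundle characterization of Theorem~\ref{thm:pmsym} together with the classification of simplex--segment bundles (Proposition~\ref{prop-SSB}), exhibiting the Ewald point $(1,-1,\dots,-1,0,\dots,0)$ with nonzero first coordinate in each $\SSB(n,k)$. You instead verify everything by hand from the inequality description: the explicit bases $\{e_1,\dots,e_n\}$ on $F_0$ and $\{-e_i\}\cup\{e_j-e_i\}$ on $F_i$ for the strong condition, the case split on whether $0\in S$ for the star condition (your membership checks are right: $e_i$ lies on $F_0$ only and $-e_j$ lies on $F_j$ only, so neither can lie on a ridge), and a direct computation for neatness reducing to finding $y\in\{-1,0,1\}^n$ with $\sum y_i\in[s-1,s+1]$ under the constraint $|s|\le n$ forced by normal isomorphism of both displacements. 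Your neatness argument is essentially the content hidden inside the paper's appeal to Theorem~\ref{thm:pmsym} plus the SSB computation, but unpacked; what your approach buys is self-containedness and explicit witnesses, while the paper's buys brevity by reusing machinery it has already built.
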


\begin{proof}
The Ewald conditions follow from it being deeply monotone (Theorem~\ref{thm:deep-ewald}).

By Theorem~\ref{thm:pmsym} that  it is neat follows from the fact that $\mathcal E(\SSB(n,k))$ has points with $x_1\ne 0$, such as 
\[
(1,\underbrace{-1,\dots,-1}_k,0,\dots,0).
\qedhere
\]
\end{proof}

\begin{proposition}\label{ssb-ewald}
	For every $n$ and $k$, $\SSB(n,k)$ satisfies the weak, strong and star Ewald conditions.
\end{proposition}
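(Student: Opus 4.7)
The plan is to handle the three Ewald conditions separately, leveraging the bundle structure of $\SSB(n,k)$ together with an induction on the dimension.

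For the star and weak Ewald properties, I would invoke Corollary~\ref{cor:pmsym} directly. By (the proof of) Proposition~\ref{prop-SSB}, $\SSB(n,k)$ is a bundle whose fiber is the monotone simplex $\Delta_{n-1}$ and whose base is the segment $[-1,1]$. Since the fiber is neat and star Ewald by Lemma~\ref{simplex-ewald}, and the base is trivially star Ewald, Corollary~\ref{cor:pmsym} gives the star Ewald property for $\SSB(n,k)$, from which weak Ewald follows via Theorem~\ref{thm:po}.

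The harder task is strong Ewald, which I would prove by induction on $n$. When $k\le 1$, Proposition~\ref{prop-ut-ssb} says that $\SSB(n,k)$ is deeply monotone, so Theorem~\ref{thm:deep-ewald} gives strong Ewald immediately; this also takes care of the base case. For $2\le k\le n-2$, $\SSB(n,k)$ is UT-free by Proposition~\ref{prop-ut-ssb}, so I would apply Lemma~\ref{lemma:UT-free}: it suffices to verify strong Ewald for the first displacement of every facet. A direct computation shows that each first displacement is either the monotone simplex $\Delta_{n-1}$ (for the facets $x_1=\pm 1$) or unimodularly equivalent to $\SSB(n-1,k)$ (for the facets $x_i=-1$ with $i\ge 2$, and, after substituting $x_2=-kx_1-x_3-\cdots-x_n$, also for the slant facet). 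The inductive hypothesis together with Lemma~\ref{simplex-ewald} then give strong Ewald for all of these, and Lemma~\ref{lemma:UT-free} transfers it to $\SSB(n,k)$.

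The main obstacle is the remaining case $k=n-1$, where $\SSB(n,n-1)$ fails to be UT-free and Lemma~\ref{lemma:UT-free} cannot be invoked. Here I would verify strong Ewald by direct inspection, relying on the explicit description
\[
\mathcal{E}(\SSB(n,k))=\{x\in\{-1,0,1\}^n : |kx_1+x_2+\cdots+x_n|\le 1\}.
\]
On each of the $n+2$ facets, I would exhibit a set of Ewald points forming a unimodular basis of $\Z^n$: on the facets $x_1=\pm 1$ the $n$ vertices are themselves Ewald and their edge directions at a chosen vertex produce a unimodular basis; on each facet $x_i=-1$ with $i\ge 2$, a basis can be assembled from the vertex $(1,-1,\ldots,-1)$, the point $-e_i$, and the $n-2$ perturbations of $-e_i$ obtained by replacing one entry outside position $i$ by $+1$; the slant facet $kx_1+x_2+\cdots+x_n=1$ is handled analogously using $e_2,\ldots,e_n$ together with $(1,0,-1,\ldots,-1)$. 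A short row-reduction in each case confirms unimodularity and completes the induction.
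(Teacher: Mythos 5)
Your handling of the weak and star conditions coincides with the paper's: both invoke Lemma~\ref{simplex-ewald} and Corollary~\ref{cor:pmsym}, viewing $\SSB(n,k)$ as a bundle with fiber $\Delta_{n-1}$ and base $[-1,1]$. For the strong condition, however, you take a genuinely different route. The paper argues directly for all $k$ at once: it exhibits $(1,-1,\dots,-1),(1,0,-1,\dots,-1),\dots,(1,-1,\dots,-1,0)$ as a basis inside the facet $x_1=1$ (and their opposites for $x_1=-1$), and for each remaining facet it takes a unimodular basis of $\{x_1=0\}$ inside the corresponding facet of the fiber $\Delta_{n-1}$ and adjoins $(1,-1,\dots,-1)$. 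You instead induct on $n$: the cases $k\le 1$ follow from deep monotonicity via Theorem~\ref{thm:deep-ewald}, the range $2\le k\le n-2$ reduces to dimension $n-1$ through \UT-freeness and Lemma~\ref{lemma:UT-free} once all first displacements of facets are identified as $\Delta_{n-1}$ or $\SSB(n-1,k)$ (your substitution $x_2=-kx_1-x_3-\dots-x_n$ for the slant facet is correct, and the three cases exhaust $0\le k\le n-1$), and the non-\UT-free case $k=n-1$ is checked by hand. Your argument is longer, but it buys robustness: the point $(1,-1,\dots,-1)$ satisfies $|k-(n-1)|\le 1$ only when $k\ge n-2$, so it is \emph{not} an Ewald point for small $k$, and the explicit bases in the paper's own proof only literally work in that range; your induction, which confines explicit point-checking to the case $k=n-1$ where those points genuinely are Ewald, sidesteps this entirely.

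Two small repairs are needed in your $k=n-1$ verification. First, the vertices of the facet $x_1=-1$ are $(-1,\dots,-1)$ together with points having a coordinate equal to $2n-2$, so they are certainly not all Ewald; what you want there is not ``the vertices'' but the opposites of the basis you found in $x_1=+1$, which do lie in $x_1=-1$, are Ewald, and form a basis. Second, for the facets $x_i=-1$ with $i\ge 2$, the ``$n-2$ perturbations'' of $-e_i$ must be taken in the coordinates $j\notin\{1,i\}$ only, since $-e_i+e_1$ has slant value $k-1=n-2$ and fails to be Ewald for $n\ge 4$; with that choice the $n$ points listed do form a unimodular basis of Ewald points in the facet. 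With these rewordings the proof is complete.
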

\begin{proof}
The weak and star Ewald conditions follow from Lemma~\ref{simplex-ewald} and Corollary~\ref {cor:pmsym}.

For the strong Ewald condition.
The points 
	\[(1,-1,\ldots,-1),(1,0,-1,\ldots,-1),\ldots,(1,-1,\ldots,-1,0)\]
	give the unimodular basis we need for the facet $x_1=1$, and their opposites give that for the facet $x_1=-1$. 
	Every other facet is equivalent to $x_2=-1$; its intersection with $x_1=0$ gives a facet of the unimodular simplex, and by Lemma \ref{simplex-ewald}, there is a unimodular basis of the hyperplane  $\{x_1=0\}$ contained in this facet. Adding the point $(1,-1,\ldots,-1)$, we have a basis of the whole space. 
\end{proof}

\section{The number of Ewald points} \label{s3}
\label{sec:number}

In this section we study the number of Ewald points in monotone polytopes.

\subsection{The maximum number of Ewald points}

It is easy to see that  the monotone cube $\mathrm C_n:= [-1,1]^n$ maximizes the number of Ewald points among all monotone polytopes:

\begin{proposition}
\label{prop:cube}
If $P$ is an $n$-dimensional monotone polytope then (modulo unimodular equivalence) 
	\[
	\mathcal{E}(P)\subset\mathcal{E}(\mathrm{C}_n).
	\]
	In particular,
	\[
	|\mathcal{E}(P)|\le 3^n,
	\] 
	and this bound is attained only if $P$ is the monotone cube.
\end{proposition}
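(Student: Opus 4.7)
The plan is to exploit both smoothness and reflexiveness at a single vertex of $P$ to produce a unimodular change of coordinates that places $P$ inside the standard cube.

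First, I would pick any vertex $v$ of $P$. Since $P$ is monotone, every facet-defining inequality has the form $u_F \cdot x \le 1$ with $u_F$ primitive, and since $P$ is smooth, the $n$ primitive facet normals $u_1, \dots, u_n$ at $v$ form a unimodular basis of $\Z^n$. Applying the unimodular transformation whose matrix is the transpose inverse of $(u_1 \mid \dots \mid u_n)$, I may assume $u_i = e_i$, so that the facets of $P$ containing $v$ are exactly the $n$ hyperplanes $x_i = 1$ and $v = (1,1,\dots,1)$. Since these are facet-defining inequalities for $P$, we get
\[
P \subset \{x \in \R^n : x_i \le 1 \text{ for all } i\}.
\]

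Next, since $\mathcal{E}(P) \subset -P$ as well, we have $-\mathcal{E}(P) \subset P \subset \{x_i \le 1\}$, which gives $\mathcal{E}(P) \subset \{x_i \ge -1\}$. Combining, $\mathcal{E}(P) \subset [-1,1]^n \cap \Z^n = \{-1,0,1\}^n = \mathcal{E}(\mathrm{C}_n)$. This immediately yields $|\mathcal{E}(P)| \le 3^n$.

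For the equality case, suppose $|\mathcal{E}(P)| = 3^n$, so (in the coordinates above) $\mathcal{E}(P) = \{-1,0,1\}^n$. In particular $P$ contains all $2^n$ vertices of $[-1,1]^n$, and by convexity $[-1,1]^n \subset P$. Now let $u \cdot x \le 1$ be any facet inequality of $P$ with $u \in \Z^n$ primitive. Evaluating at the vertex $(\mathrm{sgn}(u_1), \dots, \mathrm{sgn}(u_n)) \in [-1,1]^n \subset P$ gives $\sum_i |u_i| \le 1$. Primitivity of $u$ then forces $u = \pm e_i$ for some $i$, so every facet of $P$ is among $\{x_i \le 1, -x_i \le 1\}$, whence $P = [-1,1]^n$.

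The only mildly delicate step is the first: verifying that smoothness plus the reflexive normalization $u_F \cdot x \le 1$ together produce the coordinate change that simultaneously maps $v$ to $(1,\dots,1)$ and the facet normals at $v$ to the standard basis; but this is immediate from the unimodularity of the matrix of facet normals at $v$. The rest is just convexity and a primitivity argument on lattice vectors.
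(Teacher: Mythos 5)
Your proof is correct. The containment argument is essentially the paper's: both normalize a vertex of $P$ so that its facet normals become the standard basis (the paper places the vertex at $(-1,\dots,-1)$, you at $(1,\dots,1)$), then use $\mathcal{E}(P)\subset P\cap(-P)$ to land inside $[-1,1]^n$. Where you diverge is the equality case. The paper argues that no monotone polytope can strictly contain the monotone cube by passing to polars: duality reverses inclusions and the dual of the cube is the standard cross-polytope, which cannot strictly contain a lattice polytope with the origin in its interior. You instead argue directly: once $[-1,1]^n\subset P$, evaluating each primitive facet inequality $u\cdot x\le 1$ (right-hand side $1$ by reflexivity) at the sign vector of $u$ forces $\sum_i|u_i|\le 1$, hence $u=\pm e_i$, and boundedness then gives $P=[-1,1]^n$. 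Your version is more elementary in that it avoids polarity altogether and only uses reflexivity of $P$ itself rather than lattice-ness of $P^\vee$; the paper's version is shorter once one knows the polar of the cube. Both are complete.
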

\begin{proof}
	Without loss of generality, we may assume that we have a vertex at $$(-1,\ldots,-1)$$ with the facet normals in the coordinate directions. In that situation, all points of $P$ have all coordinates greater than $-1$, and
	\[
	\mathcal{E}(P)\subset P\cap -P\subset \mathrm{C}_n
	\]
	while $\mathcal{E}(\mathrm{C}_n)$ consists in all integer points of $\mathrm{C}_n$.
	
	This implies that $|\mathcal{E}(P)|\le 3^n$, with equality only if $P$ contains the monotone cube. But no monotone polytope can strictly contain the monotone cube by the following argument. Duality of polytopes reverses inclusions and the dual of the monotone cube is the \emph{standard cross-polytope}, which does not strictly contain any lattice polytope with the origin in its interior. (Its non-zero lattice points are $\{\pm e_i \,|\, i\in \{1,\dots, n\}\}$, and removing any of them results in a polytope not containing the origin in its interior).
\end{proof}

\subsection{Computation of the number of Ewald points}
\label{sec:computation}

We have computed the values of $|\mathcal{E}(P)|$ for all monotone polytopes of dimension at most seven. 
The full statistics of how many polytopes have each size for $n\le 5$ is in Table~\ref{table:sizes}. To save space, in dimension six and seven let us only mention that the minimum is $117$ and $243$, and the maximum, as predicted by Proposition~\ref{prop:cube}, is $3^6 = 729$ and $3^7=2187$, respectively.
The numbers in boldface in the first column correspond to the sizes that are triple of a size in the previous dimension, hence containing the products of those polytopes with segments. If the two counts coincide, i.e. all polytopes with the indicated size are products, the number in the second column is also in boldface.

\begin{table}
\begin{center}
\begin{tabular}[t]{|c|c|}\hline
	\multicolumn{2}{|c|}{$n=2$} \\ \hline
	$|\mathcal{E}(P)|$ & count \\ \hline
	7 & 4 \\
	\textbf{9} & \textbf{1}  \\ \hline
\multicolumn{2}{c}{}\\
\multicolumn{2}{c}{}\\
\hline 
	\multicolumn{2}{|c|}{$n=3$} \\ \hline
	$|\mathcal{E}(P)|$ & count \\ \hline
	13 & 2 \\
	17 & 9 \\
	19 & 2 \\
	\textbf{21} & \textbf{4} \\
	\textbf{27} & \textbf{1} \\ \hline
\end{tabular}
\qquad
\begin{tabular}[t]{|c|c|}\hline
	\multicolumn{2}{|c|}{$n=4$} \\ \hline
	$|\mathcal{E}(P)|$ & count \\ \hline
	27 & 2 \\
	31 & 4 \\
	33 & 14 \\
	35 & 3 \\
	37 & 6 \\
	\textbf{39} & 7 \\
	41 & 27 \\
	43 & 11 \\
	45 & 18 \\
	49 & 10 \\
	\textbf{51} & 15 \\
	\textbf{57} & \textbf{2} \\
	\textbf{63} & \textbf{4} \\
	\textbf{81} & \textbf{1} \\ 
	\hline
\end{tabular}
\qquad
\begin{tabular}[t]{|c|c|}\hline
	\multicolumn{2}{|c|}{$n=5$} \\ \hline
	$|\mathcal{E}(P)|$ & count \\ \hline
	59 & 3 \\
	61 & 2 \\
	63 & 3 \\
	65 & 1 \\
	67 & 7 \\
	69 & 4 \\
	71 & 1 \\
	73 & 15 \\
	75 & 13 \\
	77 & 6 \\
	79 & 65 \\
	\textbf{81} & 4 \\
	83 & 22 \\
	85 & 25 \\
	\hline
\end{tabular}
\hspace{-0.2cm}
\begin{tabular}[t]{|c|c|}\hline
	\multicolumn{2}{|c|}{$n=5$ (cont.)} \\ \hline
	$|\mathcal{E}(P)|$ & count \\ \hline
	87 & 41 \\
	89 & 8 \\
	91 & 30 \\
	\textbf{93} & 46 \\
	95 & 35 \\
	97 & 18 \\
	\textbf{99} & 87 \\ 
	101 & 19 \\
	103 & 79 \\
	\textbf{105} & \textbf{3} \\
	107 & 41 \\
	109 & 53 \\
	\textbf{111} & 33 \\
	113 & 13 \\
	\hline
\end{tabular}
\hspace{-0.2cm}
\begin{tabular}[t]{|c|c|}\hline
	\multicolumn{2}{|c|}{$n=5$ (cont.)} \\ \hline
	$|\mathcal{E}(P)|$ & count \\ \hline
	\textbf{117} & 18 \\
	119 & 36 \\
	121 & 36 \\
	\textbf{123} & \textbf{27} \\
	\textbf{129} & \textbf{11} \\
	133 & 8 \\
	\textbf{135} & \textbf{18} \\
	141 & 3 \\
	\textbf{147} & \textbf{10} \\
	\textbf{153} & \textbf{15} \\
	\textbf{171} & \textbf{2} \\
	\textbf{189} & \textbf{4} \\
	\textbf{243} & \textbf{1} \\ \hline
\end{tabular}
\end{center}
\caption{The number of Ewald points for all monotone polytopes of dimensions $2$, $3$, $4$ and $5$.}
\label{table:sizes}
\end{table}

In all dimensions products by segments tend to occupy the highest ranks, and among those that are not, the simplex is one of the highest. We conclude that the number of Ewald points does not appear to be related to  volume (the two cases of $13$ in dimension $3$ have greater volume than the cube). Instead, it is more related to a ``degree of symmetry'', understood, for example, as the volume of $P\cap -P$.

\begin{remark}
In dimension $2$ and $3$ the polytopes  with the same 
	value of $|\mathcal{E}(P)|$ have also the same $\mathcal{E}(P)$.
	For $n=2$, direct inspection of Figure \ref{fig-mon2} shows that the Ewald points of any monotone polygon other than the square  are the seven points in the monotone hexagon.	
	The square has, of course, nine Ewald points.
	
	In dimension $3$, the two polytopes with smallest $\mathcal{E}$ are those in Figure \ref{noutfree}, which only have $13$ Ewald points. The simplex and $\SSB(3,1)$, which is contained in it, have $19$ points. The Cartesian product of any $2$-monotone polytope with a segment has $21$, except the cube, that has $27$. 
The other nine monotone $3$-polytopes  have $17$ Ewald points.
\end{remark}

An interesting question is:

\begin{question}
What is the minimum of $|\mathcal{E}(P)|$ in each dimension, and which polytope attains it?
\end{question}

The minimum value could well be $1$ in some dimension (and hence in an infinite number of them by taking products). We will now make a construction that reaches the experimental minimum in all the dimensions where we know it.

\newcommand{\E}{\mathcal E}

Observe that if the fiber of a fiber product is a monotone simplex $\Delta_n$ then the description of 
Proposition~\ref{prop:bundle} can be simplified a bit: All but one of the functions $\phi_j$ in the statement can be assumed to be zero, so the fiber product is completely determined by the last one, that we can call just $\phi$. More explicitly, every monotone fiber product with fiber $\Delta_n$ and base $B\subset \R^k$ can be assumed to have description
\[
P:=\left\{ (x,y)\in \R^k\times \R^n\,|\, 
\begin{array}{rl}
	u_i\cdot x &\le  1, \ i =1,\dots, m \\ 
	-y_j & \le 1,  j =1,\dots, n \\
	\sum_j y_j &\le 1 + \phi(x) \\
\end{array}
\right\},
\]
where $\phi:\R^k\to \R$ is any linear function with $\phi(B \cap \Z^k) \subset \Z_{\ge -n}$. (The last condition is needed in order for $Q_x$ to be a positive integer dilation of $\Delta_n$).

In order for $P$ to have as few Ewald points as possible, the best choice seems to be to have $\phi$ as asymmetric as possible. 

\begin{definition}
	Let $B$ be a monotone polytope and $F$ a facet of it, with facet inequality $u\cdot x \le 1$.
	We call \emph{small fiber bundle with base $B$ and fiber $\Delta_n$ at the facet $F$}, the fiber bundle with the above description taking $\phi(x) = -n u\cdot x$.
\end{definition}

In what follows, when $F$ is a specified facet of a monotone polytope, with facet inequality $u\cdot x \le 1$, we denote
\begin{align*}
	\E_+(P,F)&:= \E(P) \cap F = \E(P) \cap \{u\cdot x = 1\},\\
	\E_0(P,F)&:=  \E(P) \cap \{u\cdot x = 0\},\\
	\E_-(P,F) &:=  \E(P) \cap \{u\cdot x = -1\}.
\end{align*}
Of course, $|\E_+(P,F)|=|\E_-(P,F)|$.

\begin{lemma}
	Let 
	$B$ be a monotone polytope and $F$ a facet of it. Let $P$ be the small fiber bundle with base $B$ and fiber $\Delta_n$ at the facet $F$, and let $F'$ be the facet with inequality $\sum_j y_j \le 1 + \phi(x)$ in $P$. Then:
	\begin{align*}
		|\E_+(P,F')|&= n  |\E_+(B,F)| + |\E_+(\Delta_{n})|\,|\E_0(B,F)|,\\
		|\E_0(P,F')|&=  2 |\E_+(B,F)| + |\E_0(\Delta_{n})|\,|\E_0(B,F)|.
	\end{align*}
\end{lemma}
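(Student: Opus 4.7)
The plan is a direct case analysis based on the value of $u\cdot x$ for $x$ in the projection of the candidate Ewald point. First I would check that the inequality defining $F'$ is already in primitive form: since $u$ is primitive (it is the facet normal of $F$ in $B$) and the $y$-coefficients are all $1$, the vector $(nu,1,\dots,1)\in\Z^{k+n}$ is primitive. Hence the ``level sets" that one needs to count for $\E_+(P,F')$ and $\E_0(P,F')$ are
\[
\{(x,y)\in \E(P)\,|\, nu\cdot x + \textstyle\sum_j y_j = 1\}
\quad\text{and}\quad
\{(x,y)\in \E(P)\,|\, nu\cdot x + \textstyle\sum_j y_j = 0\}
\]
respectively.

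Next I would reduce to statements about $x$ and $y$ separately. The projection $\pi(x,y)=x$ sends $P$ to $B$ and $-P$ to $-B$, so every $(x,y)\in\E(P)$ has $x\in \E(B)$; conversely the $y$-inequalities of $P$ and $-P$ force $y\in[-1,1]^n$ and the two $F'$-inequalities force $|nu\cdot x+\sum_j y_j|\le 1$. In particular, since $x\in \E(B)$, the primitivity of $u$ forces $u\cdot x\in\{-1,0,1\}$, which is exactly the partition of $\E(B)$ into $\E_+(B,F)$, $\E_0(B,F)$, $\E_-(B,F)$.

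The computation then amounts to counting, for each fixed $x$, the integer vectors $y\in\{-1,0,1\}^n$ satisfying a prescribed value of $\sum_j y_j$ (noting that once $\sum y_j$ is pinned down within the allowed window $[-1-nu\cdot x,\,1-nu\cdot x]$ the inequality $(x,y)\in \E(P)$ is automatic). For $\E_+(P,F')$ the condition is $\sum y_j=1-nu\cdot x$:
\begin{itemize}
\item If $u\cdot x=1$, then $\sum y_j=1-n$, which forces exactly one $y_j=0$ and the rest $-1$; this gives $n$ choices per $x\in\E_+(B,F)$.
\item If $u\cdot x=0$, then $\sum y_j=1$, which gives exactly the Ewald points of $\Delta_n$ on its top facet, contributing $|\E_+(\Delta_n)|$ choices per $x\in\E_0(B,F)$.
\item If $u\cdot x=-1$, then $\sum y_j=1+n>n$, which is impossible.
\end{itemize}
Summing gives the first formula. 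For $\E_0(P,F')$ the condition is $\sum y_j=-nu\cdot x$: the case $u\cdot x=1$ forces $y=(-1,\dots,-1)$ ($1$ choice), the case $u\cdot x=0$ gives the Ewald points of $\Delta_n$ on the hyperplane $\sum y_j=0$, and the case $u\cdot x=-1$ forces $y=(1,\dots,1)$ ($1$ choice). Using $|\E_+(B,F)|=|\E_-(B,F)|$, this sums to the second formula.

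There is no real obstacle here: the only subtlety is verifying that in each case the remaining inequalities $(x,y)\in P$ and $-(x,y)\in P$ are automatically satisfied once the level of $\sum y_j$ has been fixed, which follows immediately from $y\in\{-1,0,1\}^n$ and $u\cdot x\in\{-1,0,1\}$.
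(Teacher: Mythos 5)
Your proof is correct and follows essentially the same route as the paper's: project to the base, partition $\mathcal E(B)$ according to $u\cdot x\in\{-1,0,1\}$, and count the admissible fibre points $y\in\{-1,0,1\}^n$ with the prescribed value of $\sum_j y_j$ in each case. The only cosmetic difference is that the paper treats the $u\cdot x=-1$ case by symmetry with $u\cdot x=+1$ (hence the factor $2$), whereas you enumerate it directly before invoking $|\E_+(B,F)|=|\E_-(B,F)|$.
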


Observe that in $\Delta_n$ we do not need to specify that facet for the notation $\E_+$ or $\E_0$, since all facets are equivalent. In fact,
\[
|\E_0(\Delta_{n})| = |\E(\Delta_{n-1})|, \qquad
|\E_+(\Delta_{n})| = (|\E(\Delta_{n})| - |\E(\Delta_{n-1})|)/2.
\]
The following table gives the values of this for small $n$:
\[
\begin{array}{r|cccccc}
	n& 1 & 2& 3& 4& 5 &6\\
	\hline
	|\E_0(\Delta_{n})| &1&3&7&19&51&141\\
	|\E_+(\Delta_{n})| &1&2&6&16&45&126\\
\end{array}
\]

\begin{proof}
	Let 
	\[
	B=\{x\in \R^n\,|\, u_i\cdot x \le 1, i =1,\dots, m\}
	\]
	be the inequality description of $B$, and $u\cdot x=1$ be the equation defining $F$. By definition,
	the inequality description of $P$ is
	\[
	P=\left\{ (x,y)\in \R^k\times \R^n\,|\, 
	\begin{array}{rl}
		u_i\cdot x &\le  1, \ i =1,\dots, m \\ 
		-y_j & \le 1,\  j =1,\dots, n \\
		\sum_j y_j &\le 1 - n u\cdot x. \\
	\end{array}
	\right\}.
	\]
	Consider the projection $\pi: P \to B$. We only need to look at how many points are there in the fiber of each $x\in \E(B)$. We only need to consider points with $u\cdot x \in \{0,+1\}$, since those with $u\cdot x =-1$ are opposite to the ones with $u\cdot x =1$, and those with $u\cdot x \not\in \{-1,0,+1\}$ do not contribute to $\E(P)$. 
	\begin{enumerate}
		\item Let $x\in \E_+(B,F) \subset F$, that is,  $u\cdot x =1$. Its fiber is then the unimodular simplex
		\[
		-\one + \delta_{n} = \{x\in \R^{n}\,|\, x_i\ge -1, \sum_i x_i \le 1-n\},
		\]
		which has $n+1$ lattice points (its vertices). Of these, $n$ of them (those with $\sum_i x_i = 1-n$) have their fibers in $\E_+(P,F')$ and the other one (the point $-\one$) in $\E_0(P,F')$. That is, each point of $\E_+(B,F)$ contributes $n$ to the count in $\E_+(P)$ and one to the count in $\E_0(P)$. The latter needs to be multiplied by two since the points in $\E_-(B,F)$ contribute to $\E_0(P)$ too.
		
		\item Let $x\in \E_0(B,F)$, that is $u\cdot x =0$. Its fiber is  the monotone simplex
		\[
		\Delta_{n} = \{x\in \R^{n}\,|\, x_i\ge -1, \sum_i x_i \le 1\},
		\]
		which contributes $|\E(\Delta_n)|$ points to $\E(P)$.
		Of these, $|\E_+(\Delta_n)|$ (resp. $|\E_0(\Delta_n)|$) lie in $|\E_+(P,F')|$ (resp. $|\E_0(P,F')|$).
\qedhere
	\end{enumerate}
\end{proof}

This has the following consequences:

\begin{lemma}
	Let $B$ be a monotone polytope and $F$ a facet of it with $|\E_+(B,F)|=|\E_0(B,F)|$. Then, 
	\begin{enumerate}
		\item The small fiber bundle $P$ with base $B$ and fiber $\Delta_3$ at the facet $F$ has $|\E(P)|= 9  |\E(B)|$ and still has a facet $F'$ with $|\E_+(P,F')|=|\E_0(P,F')|$.
		\item The small fiber bundle $P$ with base $B$ and fiber $\Delta_2$ at the facet $F$ has $|\E(P)|= \frac{13}{3}  |\E(B)|$.
		\item The second iteration of the small fiber bundle $P$ with base $B$ and fiber $\Delta_2$ at the facet $F$ has $|\E(P)|= \frac{59}{3}  |\E(B)|$.
	\end{enumerate}
\end{lemma}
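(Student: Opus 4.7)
The plan is to apply the preceding lemma directly, together with the observation that every Ewald point $x$ of a monotone polytope $P$ with primitive facet normal $u$ satisfies $u\cdot x \in \{-1,0,1\}$: indeed, $u\cdot x$ is an integer (since $u$ is primitive and $x\in\Z^n$) and both $\pm x\in P$ force $|u\cdot x|\le 1$. Hence
\[
|\E(P)| \;=\; 2|\E_+(P,F')| + |\E_0(P,F')|,
\]
where $F'$ denotes the facet of $P$ with normal $u$. This identity converts all three parts of the lemma into routine substitutions into the formulas of the preceding lemma. Writing $a := |\E_+(B,F)| = |\E_0(B,F)|$ for the hypothesized common value, we have $|\E(B)| = 3a$.

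For part (1), I substitute $n=3$ together with the tabulated values $|\E_+(\Delta_3)|=6$ and $|\E_0(\Delta_3)|=7$. A short calculation gives $|\E_+(P,F')|=|\E_0(P,F')|=9a$, which simultaneously yields both assertions: the total count is $|\E(P)|=27a=9|\E(B)|$, and the balance condition $|\E_+|=|\E_0|$ is inherited by the new facet $F'$. This preservation is the key structural point of part (1), since it is what permits the $\Delta_3$ construction to be iterated indefinitely.

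For part (2), the analogous substitution with $n=2$, $|\E_+(\Delta_2)|=2$, $|\E_0(\Delta_2)|=3$ produces $|\E_+(P,F')|=4a$ and $|\E_0(P,F')|=5a$, and therefore $|\E(P)|=13a=\tfrac{13}{3}|\E(B)|$. For part (3), I invoke the preceding lemma a second time on the polytope of part (2) at its facet $F'$. The crucial remark here is that the preceding lemma does not actually use the balance hypothesis $|\E_+|=|\E_0|$ of $B$ in its proof; it computes $|\E_\pm(P,F')|$ for arbitrary $|\E_\pm(B,F)|$. Consequently it applies with the imbalanced pair $(|\E_+|,|\E_0|)=(4a,5a)$ produced by part (2), yielding $|\E(P)|=2(2\cdot 4a+2\cdot 5a)+(2\cdot 4a+3\cdot 5a)=59a$.

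No step is a serious obstacle. The only conceptual point worth isolating is the $\{-1,0,1\}$-layering of $\E(P)$ with respect to the new top facet, which turns the entire lemma into a mechanical invocation of the preceding one; the rest is bookkeeping, and the only thing to verify is that part (3) does not require the equality hypothesis to propagate through the first iteration, which it does not.
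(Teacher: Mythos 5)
Your proposal is correct and is exactly the intended argument: the paper states this lemma as an immediate consequence of the preceding one and omits the proof, and your substitutions (using $|\E(P)|=2|\E_+(P,F')|+|\E_0(P,F')|$, which holds since $P$ is monotone, together with the tabulated values for $\Delta_2$ and $\Delta_3$) reproduce the claimed counts $27a=9\cdot 3a$, $13a$, and $59a$. Your observation that the preceding lemma carries no balance hypothesis, so it can be re-applied to the imbalanced pair $(4a,5a)$ in part (3), is the right justification for the iteration.
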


\begin{corollary}\label{cor:minimum}
	For each  $n\in \Z_{\ge 1}$ let $\mathrm{E}_{\min}(n)$ denote the minimum number of Ewald points among monotone $n$-polytopes. 
Then:
	\begin{equation*}
\mathrm{E}_{\min}(n) \le 
\begin{cases}
3\cdot 9^k, & \text{if $n=3k+1$,}\\
\frac{59}9\cdot 9^k, & \text{if $n=3k+2$ and $k \ne 0$ (i.e., $n\ne 2$),}\\
13\cdot 9^k, & \text{if $n =3k+3$.}\\
\end{cases}
\end{equation*}
\end{corollary}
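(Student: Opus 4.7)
The plan is to iterate the small fiber bundle construction of the preceding lemma, starting from a well-chosen base case and combining parts (i), (ii), and (iii) to hit all three residues of $n$ modulo $3$. The natural base case is the monotone segment $B_0 = [-1,1]$, which has $|\mathcal{E}(B_0)| = 3$ and whose vertex $F_0 = \{1\}$ satisfies $|\mathcal{E}_+(B_0, F_0)| = |\mathcal{E}_0(B_0, F_0)| = 1$, so it is a legitimate input for the lemma.

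First I would inductively build a tower $B_0, B_1, B_2, \ldots$ by letting $B_{k+1}$ be the small fiber bundle with base $B_k$ and fiber $\Delta_3$ at the distinguished facet carried along by the construction. Part (i) of the lemma simultaneously guarantees that $|\mathcal{E}(B_{k+1})| = 9\,|\mathcal{E}(B_k)|$ and that $B_{k+1}$ inherits a facet $F_{k+1}$ with $|\mathcal{E}_+(B_{k+1}, F_{k+1})| = |\mathcal{E}_0(B_{k+1}, F_{k+1})|$, so the iteration continues indefinitely. A straightforward induction then gives that $B_k$ is monotone of dimension $3k+1$ with $|\mathcal{E}(B_k)| = 3 \cdot 9^k$, which already settles the case $n = 3k+1$.

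For the remaining two cases, I would cap off the tower with a single application of part (ii) or part (iii). Concretely, for $n = 3k+3$ I would take the small fiber bundle of $B_k$ with fiber $\Delta_2$ at the distinguished facet, producing a monotone polytope of dimension $(3k+1) + 2 = 3k+3$ with $|\mathcal{E}| = \tfrac{13}{3} \cdot 3 \cdot 9^k = 13 \cdot 9^k$. For $n = 3k+2$ with $k \ge 1$, I would apply the second iteration of the $\Delta_2$ bundle to $B_{k-1}$, producing a monotone polytope of dimension $(3(k-1)+1) + 4 = 3k+2$ with $|\mathcal{E}| = \tfrac{59}{3} \cdot 3 \cdot 9^{k-1} = \tfrac{59}{9} \cdot 9^k$. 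The heavy lifting has all been packed into the previous lemma, so the only step requiring real attention is the verification of the base case for $B_0$, which is immediate; everything else is a matter of tracking dimensions and Ewald counts through the iteration.
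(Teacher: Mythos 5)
Your proposal is correct and is exactly the iteration the paper intends: the corollary is stated without an explicit proof, as an immediate consequence of the preceding lemma, and your argument—starting the tower at the monotone segment $[-1,1]$ (where $|\mathcal{E}_+|=|\mathcal{E}_0|=1$ at a vertex), iterating part (i) with fiber $\Delta_3$ to handle $n=3k+1$, and capping with parts (ii) or (iii) for the other residues—supplies precisely the missing bookkeeping, with all dimensions and counts checking out.
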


\begin{table}[h]
\small
	\begin{tabular}{|c|c|}\hline
		$n$ & upper bound \\ \hline
		3 & 13 \\
		4 & 27 \\
		5 & 59 \\
		6 & 117 \\
		7 & 243 \\
		8 & 531 \\
		9 & 1053 \\
		10 & 2187 \\
		11 & 4779 \\
		12 & 9477 \\ \hline
	\end{tabular}
\begin{tabular}{|c|c|}\hline
	$n$ & upper bound \\ \hline
	13 & 19683 \\
	14 & 43011 \\
	15 & 85293 \\
	16 & 177147 \\
	17 & 387099 \\
	18 & 767637 \\
	19 & 1594323 \\
	20 & 3483891 \\
	21 & 6908733 \\
	22 & 14348907 \\ \hline
\end{tabular}
\begin{tabular}{|c|c|}\hline
	$n$ & upper bound \\ \hline
	23 & 31355019 \\
	24 & 62178597 \\
	25 & 129140163 \\
	26 & 282195171 \\
	27 & 559607373 \\
	28 & 1162261467 \\
	29 & 2539756539 \\
	30 & 5036466357 \\
	31 & 10460353203 \\
	32 & 22857808851 \\ \hline
\end{tabular}
\caption{Values of the upper bound for $\mathrm{E}_{\min}(n)$.}
\end{table}

\subsection{Ewald points in the monotone simplex and the \SSB\ bundles.}
\label{subsec:SBB-number}

We can also compute this size for the simplex and for the \SSB. In what follows, we will use the notation \[[x^k]f(x)\] to mean the coefficient of $x^k$ in the polynomial $f(x)$.

\begin{proposition}
\label{prop:simplex}
	If $\Delta_n$ is the monotone $n$-simplex, $$|\mathcal{E}(\Delta_n)|=[x^{n+1}](1+x+x^2)^{n+1}.$$ For $n=1$ to $9$, this gives $3,7,19,51,141,393,1107,3139,8953.$\footnote{This sequence is registered as A002426 in the OEIS (Online Encyclopedia of Integer Sequences).
	In \url{https://oeis.org/A002426}, section ``Formula'', B. Cloitre and A. Mihailovs state without proof that
	\[\lim_{n\to\infty}\frac{\sqrt{n}}{3^n}|\mathcal{E}(\Delta_n)|=\sqrt{\frac{3}{4\pi}}.\]
	}
\end{proposition}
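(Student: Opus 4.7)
The plan is to compute $\mathcal{E}(\Delta_n)$ explicitly and then identify the resulting count with the stated coefficient by a one-line generating-function identity.

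First I would unpack the defining inequalities of $\Delta_n$ and of $-\Delta_n$ simultaneously. A lattice point $x=(x_1,\ldots,x_n)\in\Z^n$ lies in $\mathcal{E}(\Delta_n)=\Delta_n\cap(-\Delta_n)\cap\Z^n$ if and only if
\[
-1\le x_i\le 1 \quad\forall i\in\{1,\dots,n\}\quad\text{and}\quad -1\le \sum_{i=1}^n x_i\le 1.
\]
(The bounds $-1\le x_i$ come from $x\in\Delta_n$, the bounds $x_i\le 1$ from $-x\in\Delta_n$, and similarly for the two sum inequalities.) Thus $\mathcal{E}(\Delta_n)$ is the set of vectors in $\{-1,0,1\}^n$ whose coordinate sum lies in $\{-1,0,1\}$. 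This is already consistent with Proposition~\ref{prop:cube}, which guarantees the cube bound, with the extra ``monotone simplex'' constraint being precisely $\bigl|\sum x_i\bigr|\le 1$.

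Next I would introduce the univariate generating function that tracks the coordinate sum. Each coordinate $x_i$ contributes the factor $x^{-1}+1+x=x^{-1}(1+x+x^2)$ to the generating function, so
\[
\sum_{(x_1,\ldots,x_n)\in\{-1,0,1\}^n} x^{x_1+\cdots+x_n}=x^{-n}(1+x+x^2)^n.
\]
The number of tuples whose sum lies in $\{-1,0,1\}$ is therefore the sum of the coefficients of $x^{-1},x^0,x^1$ in $x^{-n}(1+x+x^2)^n$, i.e.
\[
|\mathcal{E}(\Delta_n)|=[x^{n-1}](1+x+x^2)^n+[x^{n}](1+x+x^2)^n+[x^{n+1}](1+x+x^2)^n.
\]

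Finally, I would collapse these three coefficients into one using the trivial convolution identity $(1+x+x^2)^{n+1}=(1+x+x^2)\cdot(1+x+x^2)^n$, which gives
\[
[x^{n+1}](1+x+x^2)^{n+1}=\sum_{k=0}^{2}[x^{n+1-k}](1+x+x^2)^n,
\]
precisely the sum above. This yields the formula $|\mathcal{E}(\Delta_n)|=[x^{n+1}](1+x+x^2)^{n+1}$. To verify the tabulated values one would just expand for $n=1,\ldots,9$. There is no real obstacle in this argument; the only point to be careful about is recognising that the Ewald condition for $\Delta_n$ is exactly the symmetric strengthening of the cube condition by the extra inequality on the coordinate sum.
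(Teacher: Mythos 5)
Your proof is correct and follows essentially the same route as the paper: both reduce $\mathcal{E}(\Delta_n)$ to the vectors in $\{-1,0,1\}^n$ with coordinate sum in $\{-1,0,1\}$ and then identify the count with a central trinomial coefficient. The only cosmetic difference is that the paper introduces a slack variable $x_{n+1}=-\sum x_i$ to absorb the three allowed sums into a single equation $\sum_{i=1}^{n+1}y_i=n+1$ with $0\le y_i\le 2$, whereas you achieve the identical bookkeeping by multiplying the generating function by one extra factor of $(1+x+x^2)$.
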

\begin{proof}
	Let $x$ be a lattice point in $\Delta_n$ such that $-x\in\Delta_n$. This implies that
	\begin{llave}
		-1 \le x_i & \le 1\quad\forall i\in[n]; \\
		-1 \le x_1+\ldots+x_n & \le 1,
	\end{llave}
	where $[n]$ stands for $\{1,\ldots,n\}$. Taking $x_{n+1}=-x_1-\ldots-x_n$, this becomes
	\begin{llave}
		-1 \le x_i & \le 1\quad\forall i\in[n+1]; \\
		x_1+\ldots+x_{n+1} & =0,
	\end{llave}
	and defining $y_i=x_i+1$,
	\begin{llave}
		0\le y_i & \le 2\quad\forall i\in[n+1]; \\
		y_1+\ldots+y_{n+1} & =n+1.
	\end{llave}
	The number of integer solutions to this system is equal to the aforementioned coefficient.
\end{proof}

\begin{theorem} \label{keyresult}
For $n\ge 2$, $0\le k\le n-1$, we have that
	$$
	|\mathcal{E}(\SSB(n,k))|=[x^n](1+x+x^2)^n+2[x^{n-k}](1+x+x^2)^n.
	$$
\end{theorem}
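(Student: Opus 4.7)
The plan is to unfold the inequality definition of $\SSB(n,k)$, characterize Ewald points explicitly, and then reduce the count to coefficients of powers of $1+x+x^2$ in the same spirit as the proof of Proposition~\ref{prop:simplex}.

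First, I would describe $\mathcal E(\SSB(n,k))$ explicitly. A lattice point $x$ lies in $\mathcal E(\SSB(n,k))$ precisely when both $x$ and $-x$ satisfy all the defining inequalities of $\SSB(n,k)$. The coordinate inequalities collapse to $x_i\in\{-1,0,1\}$ for all $i$, while the last two facet inequalities (using both $x$ and $-x$) give
\[
-1\le kx_1+x_2+\cdots+x_n\le 1.
\]
Since $x_1\in\{-1,0,1\}$, I would split the count into three cases according to the value of $x_1$, and observe that the involution $x\mapsto -x$ is a bijection between the cases $x_1=1$ and $x_1=-1$, so it suffices to count the cases $x_1=0$ and $x_1=1$, and then double the latter.

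For $x_1=0$, the Ewald condition reduces exactly to the defining inequalities of $\mathcal E(\Delta_{n-1})$ in the coordinates $x_2,\ldots,x_n$; invoking Proposition~\ref{prop:simplex} for $\Delta_{n-1}$ yields the contribution $[x^{n}](1+x+x^2)^{n}$, accounting for the first summand of the formula. For $x_1=1$, the condition becomes
\[
-1-k \le x_2+\cdots+x_n\le 1-k,\qquad x_i\in\{-1,0,1\}.
\]
The substitution $y_i:=x_i+1$, $i=2,\ldots,n$, turns the count into the number of vectors in $\{0,1,2\}^{n-1}$ with coordinate sum $s\in\{n-2-k,\,n-1-k,\,n-k\}$, which equals
\[
\sum_{s=n-2-k}^{n-k}[y^{s}](1+y+y^2)^{n-1}.
\]

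The key algebraic step, and the only place a little care is required, is recognizing this three-term sum as a single coefficient of the next power of $1+x+x^2$. Writing
\[
(1+x+x^2)^{n} = (1+x+x^2)\cdot(1+x+x^2)^{n-1},
\]
we obtain
\[
[x^{n-k}](1+x+x^2)^{n}= [x^{n-k-2}](1+x+x^2)^{n-1}+[x^{n-k-1}](1+x+x^2)^{n-1}+[x^{n-k}](1+x+x^2)^{n-1},
\]
which matches the sum above. Hence the $x_1=1$ contribution is $[x^{n-k}](1+x+x^2)^n$; doubling it and adding the $x_1=0$ contribution gives the claimed formula. The argument is essentially a direct combinatorial accounting, and the only mild subtlety is this collapsing identity, which I would make explicit in the write-up to keep the bookkeeping transparent.
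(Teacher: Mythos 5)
Your proof is correct and follows essentially the same route as the paper's: split by $x_1\in\{-1,0,1\}$, identify the $x_1=0$ slice with $\mathcal{E}(\Delta_{n-1})$, and reduce the $x_1=1$ case to counting $\{0,1,2\}$-vectors with prescribed coordinate sums. The only cosmetic difference is at the last step, where the paper introduces a slack variable $x_{n+1}$ to turn the three-term range into a single exact-sum count, which is precisely your convolution identity $[x^{n-k}](1+x+x^2)^n=\sum_{j=0}^{2}[x^{n-k-j}](1+x+x^2)^{n-1}$ in disguise.
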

\begin{proof}
	Taking into account the equations of $\SSB(n,k)$, we have that all the symmetric points must have the coordinates between $-1$ and $1$. The symmetric points with $x_1=0$ are exactly those in the monotone $(n-1)$-simplex, that are counted by $$[x^n](1+x+x^2)^n.$$ For those with $x_1=1$, we have that
	\begin{llave}
		-1\le x_i & \le 1\quad\forall i\in[2,n]; \\
		-1-k\le x_2+\ldots+x_n & \le 1-k,
	\end{llave}
	where $[2,n]$ stands for $\{2,\ldots,n\}$. The lower bound of the first line and the upper bound in the second come from the fact that the point is in $\SSB(n,k)$, and the other bounds come from the symmetric point being also in $\SSB(n,k)$. We now take $x_{n+1}$ so that the sum is $-k$:
	\begin{llave}
		-1\le x_i & \le 1\quad\forall i\in[2,n+1]; \\
		x_2+\ldots+x_{n+1} & =-k,
	\end{llave}
	and making $y_i=x_i+1$,
	\begin{llave}
		0\le y_i & \le 2\quad\forall i\in[2,n+1]; \\
		y_2+\ldots+y_{n+1} & =n-k.
	\end{llave}
	The number of solutions is counted by $$[x^{n-k}](1+x+x^2)^n,$$ and the result follows.
\end{proof}

Theorem~\ref{keyresult} gives the following table of values for $|\mathcal{E}(\SSB(n,k))|$:

\medskip

\begin{center}
	\begin{tabular}{|c|c|c|c|c|c|c|c|c|c|} \hline
	\diagbox{$n$}{$k$}& 0 & 1 & 2 & 3 & 4 & 5 & 6 & 7 & 8 \\ \hline
	2 & 9 & 7 & & & & & & & \\ \hline
	3 & 21 & 19 & 13 & & & & & & \\ \hline
	4 & 57 & 51 & 39 & 27 & & & & & \\ \hline
	5 & 153 & 141 & 111 & 81 & 61 & & & & \\ \hline
	6 & 423 & 393 & 321 & 241 & 183 & 153 & & & \\ \hline
	7 & 1179 & 1107 & 925 & 715 & 547 & 449 & 407 & & \\ \hline
	8 & 3321 & 3139 & 2675 & 2115 & 1639 & 1331 & 1179 & 1123 & \\ \hline
	9 & 9417 & 8953 & 7747 & 6247 & 4903 & 3967 & 3451 & 3229 & 3157 \\ \hline
\end{tabular}
\end{center}

\medskip

We can notice several patterns in the table. We omit their (easy) proofs:
\begin{proposition}
\label{prop:ssb-number}
	For $n\ge 2$, the number of Ewald points of \SSB satisfies:
	\begin{enumerate}
		\item $|\mathcal{E}(\SSB(n,0))|=3|\mathcal{E}(\Delta_{n-1})|.$
		\item $|\mathcal{E}(\SSB(n,1))|=|\mathcal{E}(\Delta_n)|.$
		\item $|\mathcal{E}(\SSB(n,n-1))|=|\mathcal{E}(\Delta_{n-1})|+2n.$
		\item For a fixed $n$ and varying $k$, $|\mathcal{E}(\SSB(n,k))|$ decreases with $k$.
		\item In the same conditions, the volume of $\SSB(n,k)$ increases with $k$.
		\item For all $k$,%
		\footnote{The asymptotics for $|\mathcal{E}(\Delta_{n-1})|$  implies that, for any $k\in\N$,
	\[\sqrt{\frac{3}{4\pi}}\le \lim_{n\to\infty}\frac{\sqrt{n}}{3^n}|\mathcal{E}(\SSB(n,k))|\le 3\sqrt{\frac{3}{4\pi}}.\]
}
		\[
		1<\frac{|\mathcal{E}(\SSB(n,k))|}{|\mathcal{E}(\Delta_{n-1})|}\le 3.
		\]
	\end{enumerate}
\end{proposition}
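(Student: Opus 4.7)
\medskip

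\noindent\textbf{Proof proposal.} The plan is to derive all six statements as direct consequences of the explicit formula
\[
|\mathcal{E}(\SSB(n,k))|=[x^n](1+x+x^2)^n+2[x^{n-k}](1+x+x^2)^n,
\]
given by Theorem~\ref{keyresult}, combined with two well-known properties of the \emph{trinomial coefficients} $T_n(j):=[x^j](1+x+x^2)^n$: they are palindromic, meaning $T_n(j)=T_n(2n-j)$, and the sequence $T_n(0), T_n(1), \ldots, T_n(n)$ is strictly increasing (equivalently, $T_n$ is unimodal with unique maximum at $j=n$). Note also that by Proposition~\ref{prop:simplex} one has $|\mathcal{E}(\Delta_{n-1})|=T_n(n)$.

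Parts (i), (ii) and (iii) are essentially bookkeeping. For (i), substituting $k=0$ gives $3T_n(n)=3|\mathcal{E}(\Delta_{n-1})|$. For (iii), substituting $k=n-1$ and using $T_n(1)=n$ (since the coefficient of $x$ in $(1+x+x^2)^n$ picks exactly one of the $n$ factors to contribute an $x$) yields $T_n(n)+2n=|\mathcal{E}(\Delta_{n-1})|+2n$. For (ii), multiply out $(1+x+x^2)^{n+1}=(1+x+x^2)(1+x+x^2)^n$ to obtain
\[
|\mathcal{E}(\Delta_n)|=T_{n+1}(n+1)=T_n(n+1)+T_n(n)+T_n(n-1),
\]
and then apply the palindromic identity $T_n(n+1)=T_n(n-1)$ to match the formula for $|\mathcal{E}(\SSB(n,1))|=T_n(n)+2T_n(n-1)$.

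For (iv), as $k$ increases from $0$ to $n-1$, the index $n-k$ decreases from $n$ down to $1$, so the values $T_n(n-k)$ strictly decrease by unimodality; hence the second summand (and therefore the total) strictly decreases. For (vi), the same unimodality gives $T_n(n-k)\le T_n(n)$ with equality only at $k=0$, so the ratio
\[
\frac{|\mathcal{E}(\SSB(n,k))|}{|\mathcal{E}(\Delta_{n-1})|}=1+\frac{2T_n(n-k)}{T_n(n)}
\]
is at most $3$; and since $k\le n-1$ forces $n-k\ge 1$, we have $T_n(n-k)\ge T_n(1)=n\ge 1>0$, which gives the strict lower bound.

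For (v), the statement is purely geometric and I would approach it by direct integration. Slicing $\SSB(n,k)$ by hyperplanes of constant $x_1\in[-1,1]$ gives an $(n-1)$-dimensional simplex defined by $x_i\ge -1$ and $x_2+\cdots+x_n\le 1-kx_1$, whose Euclidean volume is $(n-kx_1)^{n-1}/(n-1)!$. Therefore
\[
\vol(\SSB(n,k))=\frac{1}{(n-1)!}\int_{-1}^{1}(n-kx_1)^{n-1}\,dx_1=\frac{(n+k)^n-(n-k)^n}{k\cdot n!}
\]
for $k>0$, with the limit $2n^n/n!$ as $k\to 0$. Writing $f(k):=(n+k)^n-(n-k)^n$ and differentiating $f(k)/k$, the inequality $(\vol(\SSB(n,k)))'>0$ reduces to showing $kf'(k)>f(k)$, which after expanding becomes an elementary inequality on the binomial sums that I would verify term-by-term (the odd-power terms in $k$ have positive coefficients on both sides and dominate term-by-term for each fixed odd exponent). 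The main technical step here is this monotonicity check, which is the only part of the proposition that does not fall out immediately from the combinatorial formula; all other items reduce to palindromy and unimodality of the trinomial coefficients.
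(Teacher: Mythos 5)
Your proposal is correct, and since the paper explicitly omits the proof (``We omit their (easy) proofs''), deriving everything from the formula of Theorem~\ref{keyresult} together with the palindromy and strict unimodality of the trinomial coefficients $T_n(j)=[x^j](1+x+x^2)^n$ is surely the intended route; parts (i)--(iv) and (vi) check out exactly as you wrote them (note that strict unimodality of $T_n(0),\dots,T_n(n)$ requires $n\ge 2$, which is in the hypothesis). Two small remarks on part (v), the only item not covered by the combinatorial formula. First, the ``term-by-term'' verification you defer does close in one line: writing $f(k)=(n+k)^n-(n-k)^n=2\sum_{j\ \mathrm{odd}}\binom{n}{j}n^{n-j}k^j$, one gets
\[
kf'(k)-f(k)=2\sum_{\substack{j\ \mathrm{odd}\\ j\ge 3}}(j-1)\binom{n}{j}n^{n-j}k^j\ \ge\ 0,
\]
so $f(k)/k$ is nondecreasing on $k>0$ and agrees with the $k\to 0$ limit $2n^n/n!$. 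Second, this inequality is an equality when $n=2$ (there is no odd $j\ge 3$), and indeed $\vol(\SSB(2,0))=\vol(\SSB(2,1))=4$; so the strict inequality $(\vol)'>0$ you aim for fails at $n=2$, and ``increases'' in item (v) can only hold in the weak sense there (it is strict for $n\ge 3$ and $k>0$). This is a defect of the statement as much as of your proof, but you should either prove the weak version or exclude $n=2$.
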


\section{Nill's Conjecture: a proof for $n=2$ and partial results for higher $n$}\label{secgen}
\label{sec:Nill}

\begin{lemma}
\label{lemma:s=0}
Let $P\subset \R^2$ be a lattice polygon containing the origin. If $\mathcal{E}(P)=\{0\}$ then $P$ is unimodularly equivalent to
\[
T_a:=\conv\{(1,0), (0,1), (-a,-a)\},
\]
for some $a \in \Z_{> 0}$.
\end{lemma}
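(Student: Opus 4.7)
The plan is to prove the classification in three steps, after reducing to the case $0\in\Int(P)$ (which is implicit in the conclusion since $T_a$ has the origin in its interior for every $a\ge 1$, and is consistent with the context of Nill's Conjecture in which this lemma is applied).

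First, I would show that some edge of $P$ lies at lattice distance exactly $1$ from the origin. Every facet of the lattice polygon $P$ has a primitive inequality $u_i\cdot x\le c_i$ with $c_i\in\Z_{\ge 1}$. If every $c_i\ge 2$, then $P\supseteq 2R$ where $R=\{x:u_i\cdot x\le 1\}$ is a rational polygon with $0\in\Int R$ and the same normal fan as $P$. Since the dual $R^{\vee}=\conv\{u_i\}$ is a lattice polygon, $R$ is reflexive, and a check through the sixteen two-dimensional reflexive polygons shows that $2R\cap(-2R)$ always contains a nonzero lattice point $w$. Then $\pm w\in 2R\subseteq P$ gives $\mathcal{E}(P)\supsetneq\{0\}$, a contradiction. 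After a unimodular change of coordinates, assume this short edge lies on the line $x+y=1$ with outward normal $(1,1)$.

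Second, I would show that this edge $F$ has exactly two lattice points, $(1,0)$ and $(0,1)$, which are its endpoints. Its lattice points form the consecutive sequence $\{(k,1-k):k_0\le k\le k_1\}\cap\Z^2$, and the endpoints are the vertices of $F$. Suppose for contradiction that $k_1\ge 2$, so $(k_1,1-k_1)\in P$ but $(-k_1,k_1-1)\notin P$. Let $u$ be the primitive outward normal of the adjacent facet (distinct from $F$) at the vertex $(k_1,1-k_1)$. Since $0\in\Int P$, the offset $c_u=k_1u_x-(k_1-1)u_y$ satisfies $k_1u_x>(k_1-1)u_y$; on the other hand, for this facet to exclude $(-k_1,k_1-1)$ one computes $(k_1-1)u_y>k_1u_x$, a direct contradiction. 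Hence the exclusion must come from a further facet of $P$; tracking the constraints this imposes on the normal fan and using $0\in\Int P$, one forces an antipodal lattice pair into $P$, contradicting $\mathcal{E}(P)=\{0\}$. The case $k_0\le -1$ is symmetric.

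Third, I would locate the remaining vertex. All other lattice points of $P$ lie in $\{x+y\le -1\}$, and at least one exists since $0\in\Int P$. I claim they all lie on the diagonal $y=x$: if $(p,q)\in P\cap\Z^2$ has $p+q\le -1$ and $p\ne q$, then by convexity together with $(1,0),(0,1)\in P$ and $0\in\Int P$ one produces an antipodal lattice pair in $P$ (the precise construction depends on the signs of $p$ and $q$, using segments from the points $(1,0),(0,1),0,(p,q)$ and their primitive lattice truncations), contradicting $\mathcal{E}(P)=\{0\}$. Hence the remaining lattice points of $P$ are $(-1,-1),\dots,(-a,-a)$ for some $a\ge 1$, and the extremal one is the third vertex of $P$, giving $P=\conv\{(1,0),(0,1),(-a,-a)\}=T_a$. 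The main obstacle I anticipate is Step 2: while the computation ruling out the adjacent facet is immediate, chasing through the remaining facets of $P$ to locate a forced antipodal pair requires a careful combinatorial analysis; Step 3 uses a similar but simpler convexity exercise.
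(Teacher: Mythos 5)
Your strategy (find a closest edge, pin down its lattice points, then locate the opposite vertex) is genuinely different from the paper's, but Step 1 contains an error on which everything else rests. From the primitivity of the $u_i$ you may only conclude that $R^{\vee}=\conv\{u_i\}$ is a lattice polygon; this does \emph{not} make $R$ reflexive, since reflexivity also requires $R$ itself to be a lattice polygon, and the vertices of $R=\{x: u_i\cdot x\le 1\}$ are non-integral as soon as two adjacent normals fail to form a unimodular pair. Worse, the statement you actually need --- that $2R\cap(-2R)$ contains a nonzero lattice point for \emph{every} such $R$ --- is false. Take the eight primitive vectors $u\in\{(\pm 4,\pm 1),(\pm 1,\pm 4)\}$ and $R=\{x : u\cdot x\le 1\ \forall u\}$: a nonzero integer point of $2R\cap(-2R)$ would be a $v\in\Z^2\setminus\{0\}$ with $|u\cdot v|\le 2$ for all eight $u$, but if $|v_1|\ge 1$ then $\max\bigl(|4v_1+v_2|,|4v_1-v_2|\bigr)=4|v_1|+|v_2|\ge 4$, and if $v_1=0$, $v_2\ne 0$ then $|v_1+4v_2|=4|v_2|\ge 4$. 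So the check through the sixteen reflexive polygons does not cover the cases that actually arise, and your reduction to ``some edge at lattice distance one'' is unproven. Steps 2 and 3 are also only sketches: the ``tracking of constraints'' that forces an antipodal pair into $P$ is precisely the combinatorial heart of the matter, and you leave it open (as you acknowledge).

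The paper avoids all of this with one clean idea: take a unimodular triangulation of $P$; the triangles containing the origin span a complete unimodular fan whose primitive ray generators lie in $P$, and every complete smooth $2$-dimensional fan refines either a Hirzebruch fan (which contains the opposite rays $(0,1)$ and $(0,-1)$, immediately contradicting $\mathcal{E}(P)=\{0\}$) or the fan with rays $(1,0),(0,1),(-1,-1)$. Hence these three points lie in $P$, and the hypothesis $\mathcal{E}(P)=\{0\}$ then confines all remaining lattice points of $P$ to a single one of the three rays, giving $T_a$ directly. If you want to salvage your outline you would need a correct proof of Step 1 --- for instance via this fan argument, at which point the rest of your outline becomes largely unnecessary --- together with complete case analyses for Steps 2 and 3.
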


\begin{proof}
Suppose $\mathcal{E}(P)=\{0\}$. As an intermediate step, we are going to show that $P$ contains (after a suitable unimodular transformation) the triangle $T_1$.

Consider a unimodular triangulation of $P$ (which for a lattice polygon, always exists). The triangles containing the origin, extended as cones, form a smooth complete fan, with all  primitive generators contained in $P$.
Now, every smooth two-dimensional fan refines (after a unimodular transformation) either a ``Hirzebruch fan'' 
(the fan with with rays $(0,-1)$, $(0,1)$ $(-1,0)$, and $(1,k)$, for some $k\in \Z_{\geq 0}$) 
or the fan with generators $(1,0)$, $(0,1)$, $(-1,-1)$ (see \cite[Theorem 10.4.3]{CLS}); 
since all Hirzebruch fans contain two opposite generators, the condition $\mathcal{E}(P)=\{0\}$ implies that $P$ contains the three points $(1,0)$, $(0,1)$, $(-1,-1)$, as claimed.

Once  we know this, the fact that $(1,0),(0,1) \in P$ and $(1,1)=-(-1,-1)\not\in P$ implies that $P$ contains no lattice point in the interior of the positive quadrant. 
The same argument in the cones $\cone((-1,-1), (0,1))$ and $\cone((-1,-1), (0,1))$ implies that all lattice points of $P$ lie along one of the rays generated by $(1,0)$, $(0,1)$ and $(-1,-1)$.

Now, it is impossible for $P$ to contain points (other than the generators) along \emph{two} of these rays, because then it would contain also points in the interior of the cone. Thus, all lattice points (in particular all vertices) of $P$ apart of $\{(0,0), (1,0), (0,1),(-1,-1)\}$ lie along one of the rays of the fan. Without loss of generality we assume this ray to be the one generated by $(-1,-1)$.
\end{proof}

In the following statement we say that a vertex $v$ of a lattice polygon is \emph{quasi-smooth} if it lies at lattice distance one from the line going through $v^+$ and $v^-$, where $v^+$ and $v-$ are the lattice points previous and next to $v$ along $\partial P$.

\begin{lemma}
\label{lemma:quasi-smooth2}
Suppose $\{(-1,0),(1,0)\}\in P$ and that the (at most two) vertices of $P$ in the line $\{(x,y): y=0\}$ are quasi-smooth.
Then, $\mathcal{E}(P)$ contains some point of the form $(a,1)$. In particular, it contains a lattice basis.
\end{lemma}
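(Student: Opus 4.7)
The plan is to exhibit an integer $a$ such that $(a,1) \in P$ and $(-a,-1) \in P$: this gives $(a,1) \in \mathcal{E}(P)$, and combined with $(1,0) \in \mathcal{E}(P)$ (immediate from the hypothesis) it yields the unimodular basis $\{(1,0),(a,1)\}$. Equivalently, setting $I_1 := \{x\in \R : (x,1) \in P\} = [l_1, r_1]$ and $I_{-1} := \{x \in \R : (x,-1) \in P\} = [l_{-1}, r_{-1}]$, I need an integer in $I_1 \cap (-I_{-1}) = [\max(l_1,-r_{-1}),\, \min(r_1,-l_{-1})]$. Both intervals are non-empty because $P$ is a $2$-dimensional lattice polygon with origin in its interior, so its $y$-extremes are attained at lattice vertices with $|y| \ge 1$.

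To control these endpoints I would analyse the intersections $v_R := (r,0)$ and $v_L := (l,0)$ of $\partial P$ with the $x$-axis ($r \ge 1$, $l \le -1$). If $v_R$ is interior to an edge of $P$, then that edge runs from a vertex with $y > 0$ to one with $y < 0$, and by the gcd structure of lattice points on a lattice segment it carries explicit lattice points of $P$ at $y = \pm 1$, which pin down $r_1$ and $r_{-1}$. If instead $v_R$ is a vertex, write $(p,q)$ and $(p',-q')$ for the primitive upward and downward edge directions at $v_R$ (with $p,p' \le 0$ and $q,q' \ge 1$). Then by convexity $r_1 = r + p/q$ and $r_{-1} = r + p'/q'$, and quasi-smoothness at $v_R$ translates into the Diophantine identity $|pq' + p'q| = \gcd(q+q', p-p')$. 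This tightly couples the two primitive directions: either one of $q,q'$ equals $1$ (so an edge at $v_R$ carries a lattice point of $P$ at $y = \pm 1$ directly), or else the up- and down-directions are near-reflections of each other across the $x$-axis. The symmetric analysis at $v_L$ handles $l_1$ and $l_{-1}$.

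The proof then combines the two sides. In the regime where at least one edge at $v_R$ or $v_L$ has primitive direction with $y$-component $1$, the corresponding lattice point of $P$ at $y = \pm 1$ becomes the candidate for $a$, and one uses the quasi-smoothness at the opposite vertex (or the edge-interior analysis there) to check that its reflection also lies in $P$. When all four primitive edge directions have $y$-component $\ge 2$, the coupling forced by the Diophantine identity makes $P$ locally reflection-symmetric across the $x$-axis at both extremes, so $r_1 = -r_{-1}$ and $l_1 = -l_{-1}$, forcing $0 \in I_1 \cap (-I_{-1})$ and hence $(0,1) \in \mathcal{E}(P)$. The main obstacle will be the exhaustive case analysis coupling the independent quasi-smoothness data at $v_R$ and $v_L$: since the Diophantine identity above admits several solution families and the two vertices are constrained independently, one must verify that every combination of vertex or edge-interior configurations on the two sides produces a concrete integer witness in the overlap interval.
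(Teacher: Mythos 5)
Your target is the right one---exhibit an integer $a$ with $(a,1)\in P$ and $(-a,-1)\in P$---but as written the proposal is not a proof: the decisive case analysis is deferred, and two of the structural claims it is meant to rest on are false. First, an edge of $P$ crossing the positive $x$-axis in its relative interior need \emph{not} carry lattice points at $y=\pm 1$: the edge from $(2,3)$ to $(3,-3)$ has primitive direction $(1,-6)$ and contains no lattice point besides its endpoints. (What is true, and is what actually matters, is that $(r_{-1},-1)$, $(r_0,0)$, $(r_1,1)$ are collinear in that case, so $r_1+r_{-1}=2r_0$.) Second, the dichotomy you extract from the quasi-smoothness identity $|pq'+p'q|=\gcd(q+q',\,p-p')$ --- ``one of $q,q'$ equals $1$, or the two directions are near-reflections across the axis'' --- fails: the directions $(2,3)$ and $(-6,-5)$ satisfy the identity (the chord joining $v_R+(2,3)$ to $v_R+(-6,-5)$ has direction $(1,1)$ and lies at lattice distance $|2-3|=1$ from $v_R$), have $q=3$ and $q'=5$, and are not reflections of one another in any useful sense; here $r_1-r=2/3>0>-6/5=r_{-1}-r$, so the claimed local reflection symmetry and the conclusion $0\in I_1\cap(-I_{-1})$ both break down. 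Combined with the admitted fact that the cross-cases between $v_R$ and $v_L$ are left unverified, this is a genuine gap rather than a routine omission.

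The paper's proof sidesteps the local Diophantine analysis entirely. If no vertex of $P$ lies on the line $y=0$, collinearity on each side gives $l_1+l_{-1}=2l_0\le -2$ and $r_1+r_{-1}=2r_0\ge 2$, hence $\lceil l_1\rceil+\lceil l_{-1}\rceil<0<\lfloor r_1\rfloor+\lfloor r_{-1}\rfloor$; since the integer intervals $I_1$ and $I_{-1}$ are nonempty (a unimodular triangulation of $P$ using the segment from $(0,0)$ to $(1,0)$ as an edge forces lattice points of $P$ at $y=1$ and at $y=-1$), their Minkowski sum is an integer interval containing $0$, which is precisely the existence of the desired $a$. No information about primitive edge directions is needed, and there is nothing to couple between the two sides. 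Quasi-smoothness is used only to dispose of vertices on the axis: cut each one off by a line at lattice distance one, reducing to the no-vertex case, or---if the origin lands on a cutting line---read the pair $(a,1),(-a,-1)$ directly off that line. I would rebuild your argument around this interval-sum observation.
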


\begin{proof}
We first prove the case when the line $\{(x,y): y=0\}$ contains no vertex of $P$. 
Let $l_i =\min\{x\in \R : (x,i) \in P\}$, for $i\in \{-1,0,1\}$. Observe that the $l_i$ may not be integer. Since $(l_0,0)$ is not a vertex, we have 
$l_{-1}+l_1= 2l_0 \le -2 $. Similarly, calling $r_i =\max\{x\in \R : (x,i) \in P\}$ we have $r_{-1}+r_1= 2r_0 \ge 2 $.
This implies that
\begin{align}
\lceil l_{-1}\rceil + \lceil l_1\rceil < 0  <
\lfloor r_{-1}\rfloor + \lfloor r_1\rfloor.
\label{eq:nonempty}
\end{align}

Now, $P$ must contain at least one lattice points of the form $(*,1)$ and one of the form $(*,-1)$ (as can be seen considering, for example, any unimodular triangulation of $P$ that 
uses the segment $\{(0,0), (1,0)\}$  as an edge.)
That is to say, $\lceil l_{1}\rceil \le \lfloor r_{1}\rfloor$ and $\lceil l_{-1}\rceil \le \lfloor r_{-1}\rfloor$.
Calling $I_1$ and $I_{-1}$ the intervals $[\,\lceil l_{1}\rceil , \lfloor r_{1}\rfloor\,]$ and $[\,\lceil l_{-1}\rceil , \lfloor r_{-1}\rfloor\,]$ respectively,  Equation \eqref{eq:nonempty} says that $0\in I_1+I_2$. Since $I_1$ and $I_2$ are integer intervals, this implies there is an $a\in I_1\cap \Z$ with $-a \in I_2$. Hence, $(a,1), (-a,-1) \in P$, as we wanted to show.

In the general case where the line $\{(x,y): y=0\}$ contains one or two vertices of $P$, the fact that these vertices are quasi-smooth implies that we can cut them off by respective lines at distance one
 and we have a smaller lattice polygon $P'$, which now does not have any vertex in the line $\{(x,y): y=0\}$. If $P'$ still contains the origin in its interior, then we apply the previous case to $P'$, so let us assume that it does not. 
 
 Then the origin must be in one of the two lines that we have used to cut $P$,
because the lines being at distance one implies that the triangles we have cut do not have interior lattice points, so if one of the triangles contains the origin then the origin is on the edge of the triangle opposite to the vertex of $P$ that we are cutting.
The fact that the origin is in one of those lines implies that first points along that line in opposite directions form a pair in $\mathcal{E}(P)$. That pair must be of the form $(a,1)$, $(-a,-1)$ for some $a$, because it forms a unimodular triangle with the origin and the vertex $(*,\pm1)$ of $P$.

 Hence, the  point $(a,1)$ along that line lies in $\mathcal{E}(P)$.
\end{proof}

These two lemmas easily imply the Generalized Ewald's conjecture in dimension two not only in the smooth case, but also in the quasi-smooth one:

\begin{corollary}[Generalized Ewald's conjecture in dimension 2] 
\label{coro:dim2}
If $P$ is a quasi-smooth lattice polygon with the origin in its interior then $\mathcal{E}(P)$ contains a lattice basis.
\end{corollary}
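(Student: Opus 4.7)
The plan is to combine the two preceding lemmas, with the quasi-smoothness hypothesis playing two distinct roles: first, to rule out the exceptional family $T_a$ from Lemma~\ref{lemma:s=0}, and second, to automatically satisfy the ``quasi-smooth vertices on the axis'' hypothesis of Lemma~\ref{lemma:quasi-smooth2}.

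\medskip

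\textbf{Step 1 (Ruling out $\mathcal{E}(P)=\{0\}$).} I would first check that none of the exceptional triangles $T_a=\conv\{(1,0),(0,1),(-a,-a)\}$ with $a\geq 1$ is quasi-smooth. The three edges of $T_a$ have direction vectors $(-1,1)$, $(a+1,a)$ and $(a,a+1)$, each primitive since $\gcd(a,a+1)=1$, so the three boundary lattice points are exactly the three vertices. The neighbors of $(-a,-a)$ along $\partial T_a$ are therefore $(1,0)$ and $(0,1)$, which span the line $x+y=1$. The lattice distance from $(-a,-a)$ to this line equals $|{-a}-a-1|=2a+1\geq 3$. Hence $T_a$ is never quasi-smooth, so by the contrapositive of Lemma~\ref{lemma:s=0} a quasi-smooth polygon $P$ containing the origin in its interior must satisfy $\mathcal{E}(P)\neq\{0\}$.

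\medskip

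\textbf{Step 2 (Reduction and application of Lemma~\ref{lemma:quasi-smooth2}).} Pick any nonzero $v\in\mathcal{E}(P)$; replacing $v$ by $v/k$ where $k$ is the gcd of its coordinates (which still lies in $\mathcal{E}(P)$ by convexity, as it lies on the segment joining $-v$ and $v$) we may assume $v$ is primitive. Apply a $\GL(2,\Z)$ transformation sending $v\mapsto (1,0)$; note that quasi-smoothness of $P$ is preserved. Now $\{(-1,0),(1,0)\}\subset\mathcal{E}(P)$, and the hypothesis of Lemma~\ref{lemma:quasi-smooth2} concerning the (at most two) vertices of $P$ lying on $\{y=0\}$ is automatically satisfied, since every vertex of $P$ is quasi-smooth by hypothesis. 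Lemma~\ref{lemma:quasi-smooth2} then produces an Ewald point of the form $(a,1)$.

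\medskip

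\textbf{Step 3 (Conclusion).} The set $\{(1,0),(a,1)\}\subset\mathcal{E}(P)$ is a unimodular basis of $\Z^2$, which is exactly what Conjecture~\ref{gec} asserts for $P$.

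\medskip

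\textbf{Expected main obstacle.} There is essentially none: the real work has already been absorbed into Lemmas~\ref{lemma:s=0} and~\ref{lemma:quasi-smooth2}. The only nontrivial verification is the distance computation in Step~1 that shows $T_a$ fails quasi-smoothness at the vertex $(-a,-a)$, which is the precise point where the quasi-smooth hypothesis (rather than full smoothness) is needed and is, in fact, the reason the corollary can be stated in the stronger quasi-smooth form rather than just for smooth lattice polygons.
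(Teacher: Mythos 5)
Your proposal is correct and follows exactly the same route as the paper: rule out the triangles $T_a$ of Lemma~\ref{lemma:s=0} via quasi-smoothness, normalize a primitive nonzero Ewald point to $(1,0)$, and conclude with Lemma~\ref{lemma:quasi-smooth2}. The explicit distance computation $2a+1\ge 3$ at the vertex $(-a,-a)$ is a useful verification that the paper leaves implicit, but the argument is otherwise identical.
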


\begin{proof}
Since the lattice triangles $T_a$ of Lemma~\ref{lemma:s=0} are not quasi-smooth, we know that $\mathcal{E}(P)$ contains a non-zero point, and we can assume it to be $(1,0)$. Then, Lemma~\ref{lemma:quasi-smooth2} implies the statement.
\end{proof}

In the remaining of this section we prove two cases of the Generalized Ewald conjecture in higher dimension. In both of them we need to assume something about the position of the origin. 

First we extend Definition \ref{def:distance} to arbitrary faces.

\begin{definition}
\label{def:next}
Let $P$ be a lattice polytope, $F$ a face of it,  and $x_0\in P$. 
We call \emph{distance from $x_0$ to $F$} the maximum distance from $x_0$ to the facets containing $F$.
We say that $x_0$ is \emph{next to $F$} if it is in the interior of $P$ and at distance one from $F$.
Equivalently, if $x_0$ lies in the first displacement of $F$ (Definition~\ref{def:displacement}).
\end{definition}

\begin{proposition}
\label{prop:Nill-higherdim}
Let $P$ be a deeply smooth $d$-polytope with the origin in its interior, and suppose that the origin is next to a certain vertex $v$. Then, $\mathcal{E}(P)$ contains the lattice basis consisting of the primitive edge vectors of $P$ at $v$.
\end{proposition}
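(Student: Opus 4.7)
The plan is to proceed by induction on the dimension $d$. The base case $d=1$ is immediate: after a reflection if necessary we may assume $v=-1$ and $u_1=1$, so $P=[-1,b]$ for some integer $b\ge 1$ (the latter because the origin lies in the interior of $P$); then $\pm u_1\in P$ and $\{u_1\}$ is the required lattice basis in $\mathcal{E}(P)$.

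For the inductive step I first unpack the hypothesis. Let $n_1,\ldots,n_d$ be the primitive outer facet normals at $v$ and $u_1,\ldots,u_d$ the primitive edge vectors at $v$, paired by $n_i\cdot u_j=-\delta_{ij}$. Smoothness says the first displacement of the vertex $v$ is the single point $v+\sum_i u_i$, so ``origin next to $v$'' forces $v=-\sum_i u_i$. Using deep smoothness, the corner parallelepiped at $v$ equals $\{\sum_i\mu_i u_i:\mu_i\in[-1,0]\}$ and is contained in $P$; in particular $-u_j\in P$ for every $j$.

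To obtain $u_j\in P$, which requires $d\ge 2$, I would pick any $i\ne j$ and apply the inductive hypothesis to the first displacement $F_i'$ of the facet $F_i$ at $v$ with outer normal $n_i$. By Theorem~\ref{thm:deeply}(ii) and Corollary~\ref{coro:deeply-faces}, $F_i'$ is a deeply smooth $(d-1)$-polytope normally isomorphic to $F_i$. The origin lies in the hyperplane $\{n_i\cdot x=0\}$ containing $F_i'$ and hence in $F_i'$; since every facet inequality of $F_i'$ is inherited from $P$, the origin in fact lies in its relative interior. The vertex of $F_i'$ that corresponds to $v$ under the normal isomorphism of Lemma~\ref{hyp-monotone}(iii) is $v':=v+u_i$, with primitive edge vectors precisely $\{u_k:k\ne i\}$. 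Since $v'+\sum_{k\ne i}u_k=v+\sum_k u_k=0$, the origin is the first displacement of $v'$ in $F_i'$, i.e.\ origin is next to $v'$. The inductive hypothesis yields $\{u_k:k\ne i\}\subset\mathcal{E}(F_i')\subset\mathcal{E}(P)$, and in particular $u_j\in\mathcal{E}(P)$. Together with $-u_j\in P$, this gives the full basis $\{u_1,\ldots,u_d\}\subset\mathcal{E}(P)$.

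The only delicate point is tracking how the normal isomorphism from $F_i$ to $F_i'$ identifies vertices and primitive edge vectors. This is pinned down by the construction in the proof of Lemma~\ref{hyp-monotone}(iii): because deep smoothness excludes unimodular triangles, the unique edge leaving $F_i$ at $v$ is in direction $u_i$ and meets the displacement hyperplane at $v+u_i$, while the primitive edge vectors at this image vertex in $F_i'$ coincide with the primitive edge vectors at $v$ that remain inside $F_i$, namely $\{u_k:k\ne i\}$.
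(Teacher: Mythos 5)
Your proof is correct. It rests on the same two pillars as the paper's argument -- Corollary~\ref{coro:deeply-faces} (first displacements of faces of a deeply smooth polytope are deeply smooth) and the identity $v+\sum_i u_i=0$ expressing that the origin is the first displacement of $v$ -- but it is organized differently. The paper applies Corollary~\ref{coro:deeply-faces} directly to each \emph{edge} at $v$: its first displacement is a lattice segment in direction $u_i$ with one endpoint at the displaced image $v+\sum_{j\ne i}u_j=-u_i$ of $v$ and with the origin in its relative interior, so it contains $\pm u_i$ and one is done in two lines. You instead set up a genuine induction on the dimension, descending one facet displacement at a time and verifying at each step that the displaced polytope is deeply smooth, contains the origin in its relative interior, and has the origin next to the image vertex $v'=v+u_i$; your base case $d=1$ is exactly the segment computation above. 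By Lemma~\ref{lemma:transitive} the iterated facet displacements in your induction compose to the first displacement of an edge, so your argument unwinds to the paper's; the cost is the extra bookkeeping you rightly flag (tracking the normal isomorphism of Lemma~\ref{hyp-monotone}(iii) to identify $v'$ and its edge vectors), and the benefit is that the inductive formulation makes explicit that the proposition is literally inherited by facet displacements, which is the pattern used elsewhere in Section~\ref{sec:deep}. All the individual verifications you make (that $b_i=1$ so the origin lies on the displaced hyperplane, that $\mathcal{E}(F_i')\subset\mathcal{E}(P)$, and that ranging over $i\ne j$ captures every $u_j$ when $d\ge 2$) are sound.
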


\begin{proof}
The proof is the same as in part (i) of Theorem~\ref{thm:deep-ewald}. By Corollary \ref{coro:deeply-faces} the first displacement of every edge at $v$ is a lattice segment, and since the origin is next to $v$ this lattice segment has the corresponding edge vector $u_i$ as an extreme point and the origin in its interior. Hence, $\{u_i,-u_i\} \in \mathcal{E}(P)$.
\end{proof}

In dimension three we can relax the hypotheses in two directions. We first need a lemma:

\begin{lemma}
\label{lem:quasisnooth}
Let $P$ be a  smooth $3$-polytope, let $F$ be a facet of it, and let $F_0$ be the first displacement of $F$. 
If $F_0$ is $2$-dimensional then it is a quasi-smooth polygon.
\end{lemma}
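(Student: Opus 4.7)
My approach is to analyze quasi-smoothness at each vertex $w$ of $F_0$ separately. By the proof of Lemma~\ref{hyp-monotone}(iii), each vertex $w$ of $F_0$ is the common point $w = v_i + e_i$ of a maximal chain $v_1, \dots, v_k$ of consecutive vertices of $F$ whose collapse is forced by the 2-face $G_i := \conv\{v_i, v_{i+1}, w\}$ of $P$ at each internal edge $v_i v_{i+1}$ being a unimodular triangle. Here $e_i$ denotes the primitive interior edge vector at $v_i$. When $k = 1$ there is no collapse, and Lemma~\ref{hyp-monotone}(iii) gives that $F_0$ is smooth at $w$, so quasi-smoothness is trivial.

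For $k \ge 2$ the point $w$ is a vertex of $P$ (two distinct edges $w v_1, w v_2$ meet there), and it lies in each of the $k + 1$ facets $G_0, G_1, \dots, G_k$ of $P$, where $G_i$ denotes the 2-face of $P$ at edge $v_i v_{i+1}$. Simplicity of $P$ (implied by smoothness) forces exactly three facets at $w$, so $k + 1 = 3$, i.e.\ $k = 2$.

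In the remaining case $k = 2$, I set coordinates at $w = 0$ so that the three edges of $P$ at $w$ lie along the positive coordinate axes, giving $v_1 = (1, 0, 0)$, $v_2 = (0, 1, 0)$, and a third edge endpoint $(0, 0, m)$ for some $m \ge 1$. Then $G_1$ is the unimodular triangle in $\{z = 0\}$, while $G_0$ and $G_2$ lie in $\{y = 0\}$ and $\{x = 0\}$. Since the primitive normal $u_F$ satisfies $u_F \cdot v_1 = u_F \cdot v_2$ and is primitive, it must equal $(1, 1, c)$ for some $c \in \Z$; and because $w \in F_0$, the hyperplane of $F_0$ is $\{x + y + cz = 0\}$. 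The hypothesis that $F_0$ is 2-dimensional at $w$ translates to asking the tangent cone $\{x, y, z \ge 0\} \cap \{x + y + cz = 0\}$ to be 2-dimensional, which forces $c \le -1$.

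The final step is a direct computation. The two primitive edge directions of $F_0$ at $w$ are $g^+ = (-c, 0, 1)$ from $G_0 \cap F_0$ and $g^- = (0, -c, 1)$ from $G_2 \cap F_0$; these are primitive in $\Z^3$ and hence in the $F_0$-sublattice. In the basis $\{(1, -1, 0), (0, -c, 1)\}$ of the $F_0$-lattice they become $(-c, 1)$ and $(0, 1)$ respectively; both points have second basis-coordinate equal to $1$, so the line through $w^+ = w + g^+$ and $w^- = w + g^-$ has lattice distance exactly $1$ from $w$, proving that $w$ is quasi-smooth. The main obstacle is the chain-length bound via simplicity and identifying the right local coordinates; once these are in place, the lattice computation is mechanical.
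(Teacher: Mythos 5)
Your proof is correct and follows essentially the same route as the paper's: reduce to the vertices of $F_0$ at which vertices of $F$ collapse (the apexes of unimodular-triangle faces, via Lemma~\ref{hyp-monotone}(iii)), show that at most two consecutive vertices of $F$ can collapse to a common point $w$, and then verify by an explicit lattice computation that the two lattice points adjacent to $w$ on $\partial F_0$ span a line at lattice distance one. The only genuine difference is how the longer collapsing chains are excluded --- you count the facets of $P$ at $w$ and use simplicity together with the $2$-dimensionality of $F_0$, whereas the paper invokes the fact that two adjacent unimodular-triangle faces force $P$ to be the unimodular tetrahedron --- and your version of that step is the more self-contained of the two.
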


\begin{proof}
$F_0$ is a lattice polytope by Lemma~\ref{hyp-monotone}.
To prove that it is quasi-smooth (assuming it $2$-dimensional), let $v_0$ be a vertex of it. As seen in the proof of Lemma~\ref{hyp-monotone}(iii), the only case in which $v_0$ may not be smooth if it is the third vertex of a \UT-face of $P$ with an edge in $F$. Let $v_1$ and $v_2$ be the vertices of that edge, and let $v_1'$ and $v_2'$ be the lattice points next to $v_1$ and $v_2$ along the boundary of $F$. $v'_1$ and $v'_2$ must be different, since otherwise $P$ has two adjacent \UT-faces, and the only smooth $3$-polytope with two adjacent $\UT$-faces is the unimodular tetrahedron (so that $F_0$ would be a point). 

Since $F$ is smooth, the segment $v'_1v'_2$ is parallel to $v_1v_2$ and is at distance one from it. Consider now the lattice points  $v''_1$ and $v''_2$ next to $v_0$ along the boundary of $F_0$. These points must be 
\[
v''_i = v_0 +v'_i-v_i,
\]
for otherwise $v_0$ would also be the third vertex of a second $\UT$-triangle $v_0v_iv'_i$, which again would imply $P$ to be a unimodular tetrahedron. Also, we must have $v''_1\ne v''_2$ for otherwise $F_0$ would be a segment.
Hence, we have that $v_0$ is at distance one from the segment $v''_1v''_2$ with end-points at the lattice points next to $v_0$, which is the definition of being quasi-smooth.
\end{proof}

\begin{proposition}
\label{prop:Nill-dim3}
Let $P$ be a  smooth $3$-polytope with the origin in its interior, and suppose that the origin is next to a certain edge $uv$. Then, $\mathcal{E}(P)$ contains a lattice basis.
\end{proposition}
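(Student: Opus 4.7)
The plan is to reduce the statement to the $2$-dimensional case (Corollary~\ref{coro:dim2}) by slicing $P$ along the two facets that contain the edge $uv$. Let $F_1$ and $F_2$ be those facets, with primitive outer normals $u_1,u_2$ and inequalities $u_i\cdot x \le b_i$. The hypothesis that the origin is next to $uv$ means, by Definition~\ref{def:next}, that $\max(b_1,b_2)=1$; since the origin is an interior lattice point, each $b_i$ is a positive integer, so $b_1=b_2=1$. Smoothness of $P$ at the vertex $u$ ensures that $\{u_1,u_2\}$ extends to a unimodular basis, so after a unimodular change of coordinates I may assume $F_1:x_1\le 1$ and $F_2:x_2\le 1$.

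Next I consider the first displacements $F_{1,0}:=P\cap\{x_1=0\}$ and $F_{2,0}:=P\cap\{x_2=0\}$. By Lemma~\ref{hyp-monotone}(i) each is a lattice polygon; both are $2$-dimensional and contain the origin in their relative interior, since the origin is interior to $P$ and lies in both hyperplanes. By Lemma~\ref{lem:quasisnooth} both are quasi-smooth. Applying Corollary~\ref{coro:dim2} to $F_{1,0}$ produces a lattice basis $\{w_1,w_2\}$ of the sublattice $\Z^3\cap\{x_1=0\}$ inside $\mathcal{E}(F_{1,0})\subset\mathcal{E}(P)$; applying it to $F_{2,0}$ produces a lattice basis $\{w_1',w_2'\}$ of $\Z^3\cap\{x_2=0\}$ inside $\mathcal{E}(F_{2,0})\subset\mathcal{E}(P)$.

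To finish I pick from $\{w_1',w_2'\}$ a vector $w_3$ with nonzero first coordinate, which must exist since $\{w_1',w_2'\}$ spans $\Z^3\cap\{x_2=0\}$. Because $\pm w_3\in P\subset\{|x_1|\le 1\}$ and $w_3\in\Z^3$, its first coordinate is $\pm 1$, which I may assume to be $+1$ after a sign flip; thus $w_3=(1,0,c)$ for some $c\in\Z$. The triple $\{w_1,w_2,w_3\}$ is then a unimodular basis of $\Z^3$: expanding its determinant along the first column reduces it to $\pm 1$ times the $2\times 2$ determinant of $\{w_1,w_2\}$, which is $\pm 1$ by construction. All three vectors lie in $\mathcal{E}(P)$, completing the argument. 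The only delicate point in the plan is the initial reduction $b_1=b_2=1$ and the verification that the origin lies in the relative interior of both $F_{i,0}$; once these are in place, the conclusion follows immediately from Lemma~\ref{lem:quasisnooth} and Corollary~\ref{coro:dim2}.
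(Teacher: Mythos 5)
Your proof is correct and follows essentially the same route as the paper's: slice along the first displacements of the two facets containing $uv$, invoke Lemma~\ref{lem:quasisnooth} and Corollary~\ref{coro:dim2}, and assemble the two planar lattice bases into a basis of $\Z^3$ (your determinant computation just makes the paper's final assembly step more explicit). One small inaccuracy: the inclusion $P\subset\{|x_1|\le 1\}$ need not hold, since $P$ is not assumed reflexive; what you actually use is that $\pm w_3\in P\subset\{x_1\le 1\}$ forces $|(w_3)_1|\le 1$, which is exactly right.
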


\begin{proof}
Let $F$ and $G$ be the two facets of $P$ containing $uv$, and let $F_0$ and $G_0$ be their first displacements, which contain the origin in their interior. By the previous lemma they are quasi-smooth, so we can apply Corollary~\ref{coro:dim2} to them. This tells us that 
$\mathcal{E}(F_0)$ and $\mathcal{E}(G_0)$ contain respective lattice bases of the two-dimensional lattices they span. Only one of the elements in these bases can coincide (namely the edge vector of $uv$) and talking the two vectors from one of the basis plus a non-coinciding one from the other gives us a basis of the three-dimensional lattice spanned by $P$.
\end{proof}

\section{Connection to symplectic toric geometry} \label{geometry}
\label{sec:toric}

The 
monotone polytopes we have studied in this paper are precisely the images under the momentum map of the so called monotone symplectic toric manifolds. Two of our main results (or rather, consequences of our main results)  are Theorem~\ref{thm:stem} and Theorem~\ref{thm:symplectic}, which exploit this
deep connection. In this section  we briefly recall this connection and how the problem of being displaceable for the fibers of the momentum map can be studied using the polytope.

\subsection{Monotone polytopes and symplectic toric manifolds} \label{del}

A \textit{symplectic toric manifold} is a quadruple
\[
(M,\omega,\T^n,\mu:M\to \R^n)
\]
where $(M,\omega)$ is a compact connected symplectic manifold of dimension $2n$, $\T^n$ is the standard $n$-dimensional torus which acts effectively and Hamiltonianly on $(M,\omega)$, and 
$$\mu:M\to \R^n$$ is the $\T^n$-action momentum map (which is uniquely defined up to translations and ${\rm GL}(n,\Z)$ transformations). 
By a theorem of Atiyah \cite{Atiyah} and Guillemin-Sternberg \cite{GuiSte} the image 
$\mu(M)\subset\R^n$ is a convex polytope, called the \emph{momentum polytope of $M$}, and given by the convex hull of the images under $\mu$ of the fixed points of the $\T^n$-action on $(M,\omega)$. 
Delzant~\cite{Delzant} classified  symplectic toric manifolds in terms of their momentum polytopes, by proving that that the application
\[
	(M,\omega,\T^n,\mu)\mapsto\mu(M)
\label{appmom}
\]
induces a bijection from the set of $2n$-dimensional symplectic toric manifolds, modulo isomorphism, to the set of 
smooth polytopes in $\R^n$ modulo unimodular equivalence (see for example \cite[section 4]{Pelsan-moduli} for precise definitions). For this reason smooth polytopes are often called \emph{Delzant polytopes} in the symplectic geometry literature.

 If in addition the first Chern class $c_1(M)$ of $M$ is equal (after normalization) to $[\omega]$, the symplectic toric manifold is called \textit{monotone}. 
 
 \emph{Monotone polytopes} are the polytopes associated to monotone symplectic toric manifolds via the bijection induced by (\ref{appmom}).  That is, a smooth polytope 
is \emph{monotone}  if it is 
(modulo the aforementioned normalization plus a lattice translation) the image under the momentum map of a monotone symplectic toric manifold.  See~\cite[Remark 3.2]{McDuff-probes} and \cite[p.~151, footnote]{McDuff-topology}.

\begin{example}
	The monotone $n$-simplex $\Delta_n$ corresponds to complex projective space
	$\C P^n$ endowed with the Fubini-Study form, and the monotone cube $[-1,1]^n$ corresponds to the product of $n$ copies of the complex projective line $\C P^1$.
	The other three monotone polygons (see Figure \ref{fig-mon2}) correspond to the blow-ups of $\C P^2$ at one, two or three points.
\end{example}

We refer to  Cannas da Silva~\cite{AC} and McDuff--Salamon~\cite{McduffSalamon} for an 
 introduction to Hamiltonian group actions and symplectic toric manifolds, and to McDuff's paper~\cite{McDuff-topology} for
 an in depth study of the properties of \emph{monotone} symplectic toric manifolds and \emph{monotone} polytopes.  
 In dimension $4$, Delzant's classification was generalized by the second author and V\~u Ng\d oc to \emph{semitoric symplectic manifolds} (or \emph{semitoric integrable systems}). The classification still involves a polytope, but besides the polytope additional invariants are needed. See~\cite{PeVN09, PeVN11, PeVN-BAMS11,PES}.

\subsection{Displaceable fibers in symplectic toric geometry and the Ewald conditions}

The top-dimensional fibers of the momentum map of a symplectic toric manifold 
(that is, the regular $\T^n$-orbits) are \textit{Lagrangian} submanifolds of $(M,\omega)$ in the sense that $\omega$ vanishes along them (see Figure \ref{fig-momentum} for examples). These orbits correspond to the preimages $\mu^{-1}(u),u\in\Int(P)$, where $\Int(P)$ is the interior of the polytope $P$, 
and are diffeomorphic to $(S^1)^n$.

\begin{example}
\label{exm:fibers}
Let us analyze the triangle, the momentum polygon of $\C P^2$, in more detail. Since coordinates in $\C P^2$ are defined only modulo a scalar factor, for every $[z_0:z_1:z_2]\in \C P^2$ we can assume without loss of generality that $|z_0|^2+|z_1|^2+|z_2|^2=1$. If, moreover, we consider barycentric coordinates in $\Delta_2$, then the moment map is simply
\[
\begin{array}{cccc}
\mu: &\C P^2& \to&  \Delta_2 \cr
&[z_0:z_1:z_2 ]&\mapsto& \left(|z_0|^2,|z_1|^2,|z_2|^2\right).
\end{array}
\]

Hence, the fiber of each $t=(t_0,t_1,t_2)\in \Delta_2$  is
\[
\mu^{-1}(t) = \left\{\left[\alpha_0\sqrt{t_0}:\alpha_1\sqrt{t_1}:\alpha_2\sqrt{t_2}\,\right] \,|\, \alpha_i \in S^1\right\},
\]
where $S^1=\{z\in \C\,|\, |z| =1\}$.

If $t$ lies in the interior of the triangle, one of the $\alpha_i$ can be taken equal to $1$, so the fiber is an $(S^1)^2$. If $t$ lies along an edge then one one of the $t_i$ is zero, so its $\alpha_i$ is irrelevant, and a second one can be assumed equal to $1$; the fiber is an $S^1$. Finally, if $t$ is a vertex then two of the $\alpha_i$ are irrelevant and the third one can be taken to be $1$, so the fiber is just a point. This is illustrated in the left part of Figure~\ref{fig-momentum}, and the right part shows the same for $\C P^1\times \C P^1$.
\end{example}

\begin{figure}[h]
	\begin{tikzpicture}[scale=0.65]
		\fill[color=orange] (-1.5,-1.5)--(-1.5,3)--(3,-1.5)--(-1.5,-1.5);
		\draw[->] (0,6) node {\includegraphics[height=2.75cm]{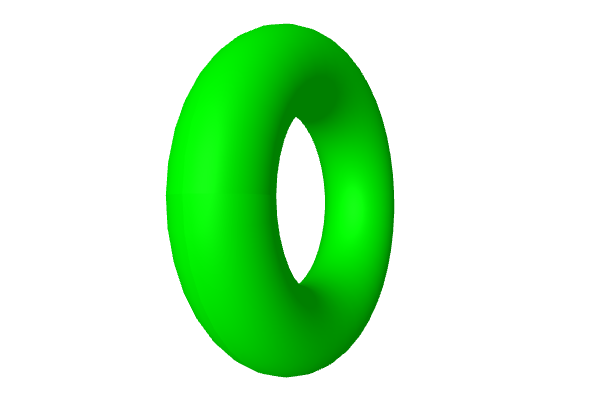}} -- (0,0);
		\draw[->] (3,4.2) -- (1.5,0);
		\draw[color=green] (3,6) ellipse (0.6cm and 1.5cm);
		\draw[->] (-3,4.5) -- (-1.5,3); 
		\fill[color=green] (-3.3,4.8) circle (0.05);
	\end{tikzpicture}
\hspace{1cm}
\begin{tikzpicture}[scale=0.65]
	\fill[color=orange] (-1.5,-1.5)--(-1.5,1.5)--(1.5,1.5)--(1.5,-1.5)--(-1.5,-1.5);
	\draw[->] (0,6) node {\includegraphics[height=2.75cm]{toro}} -- (0,0);
	\draw[->] (3,4.2) -- (1.5,0);
	\draw[color=green] (3,6) ellipse (0.6cm and 1.5cm);
	\draw[->] (-3,4.5) -- (-1.5,1.5); 
	\fill[color=green] (-3.3,4.8) circle (0.05);
\end{tikzpicture}
	\caption{
	The momentum polytopes of the complex projective space $\C P^2$ 
	and of $\C P^1\times \C P^1$ are a monotone triangle (left) and square (right). The toric fiber of a point in the interior is a 2-torus $(S^1)^2$, that of a point along an edge is a ``1-torus'' (that is, a circle) and that of a vertex is a point. See Example~\ref{exm:fibers}.}
		\label{fig-momentum}
\end{figure}

An important problem in symplectic topology, going back to Biran-Entov-Polterovich~\cite{BEP} and Cho~\cite{Cho} (see also Entov-Polterovich~\cite{EntPol}),  is deciding whether a fiber $$L_u:=\mu^{-1}(u),u\in\Int(P),$$ is \textit{displaceable} by a Hamiltonian isotopy, meaning that there exists a smooth family of functions $H_t:M\to\R, t\in[0,1]$, with associated flow $\phi_t$, and such that 
\[
\phi_1(L_u)\cap L_u=\varnothing.
\]

 The paper \cite{McDuff-probes} by  McDuff studies this question for
\textit{monotone} symplectic toric manifolds, exploiting the one-to-one correspondence between such manifolds and  monotone polytopes.
Entov and Polterovich \cite{EntPol} proved that in the monotone case the \emph{central fiber} (the fiber $L_{u_0}$ of the unique integral point in $u_0\in \mu(P)$, after normalization) is non-displaceable.%
\footnote{This was generalized by Fukaya-Oh-Ohta-Ono \cite{FOOO} as follows: for every toric symplectic manifold $M$ with momentum polytope $\mu(P)$ there is a point $u_0$, the so-called \textit{central point}, such that $L_{u_0}$ is not displaceable. The central point is defined, loosely speaking, as the unique point lexicographically maximizing its sequence of distances to facets, when the distances from a point to the facets are ordered from smallest to largest. In the monotone case the central point obtained by this procedure is the origin, since it is the only point at distance $\ge 1$ for every facet.
See also  \cite[Section 2.2]{McDuff-probes}.}
The main question addressed by McDuff is whether every other fiber is displaceable, in which case $L_{u_0}$ is called a \textit{stem}.

To attack this question McDuff introduces the notion of a point in a rational polytope being displaceable by a probe, that we now define.
In the following definition we say that a vector $\lambda\in \Z^n$ is \emph{integrally transverse} to a rational hyperplane $H$
if $\lambda$ can be completed to a unimodular basis by vectors parallel to $H$; equivalently, $\lambda = w-v$, where $w\in H$ and $v$ is at distance one from $H$, in the sense of Definition~\ref{def:distance}). 

\begin{definition}[{Displaceable by probes \cite[Definitions 2.3 and 2.5]{McDuff-probes}}]
\label{def:probes}
	Let $P$ be a rational polytope,  $F$ a facet of $P$, $w$ a point in the interior of $F$, and $\lambda\in\Z^n$ integrally transverse to $F$. The \textit{probe with direction $\lambda$ and initial point $w\in F$}
	 is the open line segment 
\[
p_{F,\lambda}(w):= w+ \R \lambda \cap \Int(P).
\]	

A point $u\in p_{F,\lambda}(w)$ is \emph{displaceable by the probe} if it is less than halfway along $p_{F,\lambda}(w)$; that is, if $2u-w\in \Int(P)$.
\end{definition}

The main results of McDuff relating the Ewald set to displaceabilty by Lagrangian isotopies are:

\begin{theorem}[McDuff]
\label{thm:McDuff}
\begin{enumerate}
\item Let $M$ be a toric symplectic manifold with momentum polytope $P$.
If a point $u\in \Int(P)$ is displaceable by a probe then its fiber $L_u\subset M$ is displaceable by a Hamiltonian isotopy~\cite[Lemma 2.4]{McDuff-probes}.
\item A monotone polytope $P$ has the star Ewald property if and only if every point of $\Int(P)\setminus \{0\}$ is displaceable by a probe~\cite[Theorem 1.2]{McDuff-probes}.
\end{enumerate}
\end{theorem}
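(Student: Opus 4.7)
The plan is to treat the two parts separately, with the subcircle $S^1_\lambda \subset \T^n$ determined by the probe direction $\lambda$ as the central tool for part (i), and a compactness/combinatorial argument for part (ii).

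For part (i), I would construct a compactly supported Hamiltonian isotopy of $M$ whose time-$1$ map sends $L_u$ to $L_{2u-w}$, which is disjoint from $L_u$ since $2u-w$ is a distinct point in $\Int(P)$ and toric fibers over distinct interior points are disjoint. The primitive vector $\lambda$ integrally transverse to $F$ pairs to $\pm 1$ with the primitive outer normal $\nu_F$, so in Delzant coordinates near $\mu^{-1}(\Int(F))$ the Hamiltonian generator $H_\lambda := \mu\cdot\lambda$ of the subcircle $S^1_\lambda$ has a flow transverse to $\mu^{-1}(F)$ that threads precisely along the probe. Taking a cutoff function $\rho$ supported in a tubular neighborhood of $\mu^{-1}(p_{F,\lambda}(w))$ that depends only on the moment-map components transverse to $\lambda$, so that $\rho$ Poisson-commutes with $H_\lambda$, the flow of $\rho\cdot H_\lambda$ translates $L_u$ along the probe by a controlled amount; tuning $\rho$ to take the value $2|u-w|$ on a neighborhood of $L_u$ produces the desired displacement. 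The halfway condition $2u-w \in \Int(P)$ is exactly what is needed to keep the whole translation inside $\Int(P)$ so that the flow remains well-defined on $M$.

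For part (ii), the direction $(\Leftarrow)$ proceeds by a limiting argument: fix a face $f$ of $P$ and pick points $u_i \in \Int(P)\setminus\{0\}$ approaching $f$ from the interior. Each $u_i$ is displaced by some probe $p_{F_i,\lambda_i}(w_i)$, and since $P$ has only finitely many facets and each admits only finitely many primitive integrally transverse directions giving probes entirely inside $P$, I can pass to a subsequence with $(F_i,\lambda_i)=(F,\lambda)$ constant. Analyzing where the entering and exiting points of the probe accumulate as $u_i\to f$, together with the condition $2u_i-w_i \in \Int(P)$, yields $\lambda \in \Star^*(f)\cap\mathcal{E}(P)$ and $-\lambda\notin\Star(f)$, establishing the star Ewald condition at $f$. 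For $(\Rightarrow)$, given $u\ne 0$, identify the face $f_u$ in whose relative interior the ray from $u$ through $0$ exits $P$, invoke star Ewald at $f_u$ to obtain $\lambda\in\mathcal{E}(P)$ with $\lambda\in\Star^*(f_u)$ and $-\lambda\notin\Star(f_u)$, and use $\lambda$ as the direction of a probe from a suitable facet $F\in\Star(f_u)$; the halfway estimate follows from $\lambda\in P\cap(-P)$ together with monotonicity (every facet inequality has right-hand side $1$).

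The main obstacle is the Hamiltonian construction in part (i): ensuring that the cutoff $\rho$ yields a well-defined flow that does not drift points into the preimage of neighboring facets of $P$. This is delicate because the Poisson-commutation $\{\rho,H_\lambda\}=0$ must be compatible with the degeneration of the moment map near the probe's endpoints, and it is here that the assumption that probes begin in the relative interior of a facet (not a lower-dimensional face) becomes essential, since it guarantees that only one facet condition degenerates at the starting point and permits an unambiguous tubular neighborhood. On the combinatorial side in part (ii), the delicate step is matching the direction $\lambda$ arising from star Ewald with a probe starting point $w$ in the relative interior of the correct facet of $\Star(f_u)$, guided by the fact that $-\lambda \notin \Star(f_u)$.
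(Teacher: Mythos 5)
This statement is quoted from McDuff with explicit citations (\cite[Lemma 2.4]{McDuff-probes} and \cite[Theorem 1.2]{McDuff-probes}); the paper gives no proof of its own, so the only meaningful comparison is between your sketch and McDuff's actual arguments.

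Your part (i) contains a genuine error in the core mechanism. The Hamiltonian $H_\lambda=\mu\cdot\lambda$ generates the subcircle action $S^1_\lambda\subset\T^n$, whose orbits lie \emph{inside} the fibers of $\mu$; its flow preserves every level set of the momentum map, so it fixes each toric fiber $L_u$ setwise and is in particular tangent, not transverse, to $\mu^{-1}(F)$. Multiplying by a cutoff $\rho$ that Poisson-commutes with $H_\lambda$ (e.g.\ a function of the momentum map) only reparametrizes this torus action, so the flow of $\rho H_\lambda$ can never move $L_u$ off itself, let alone onto $L_{2u-w}$. Motion in the probe direction in the base requires a Hamiltonian depending on the \emph{angle} coordinate conjugate to $\mu\cdot\lambda$, and the reason integral transversality to $F$ is needed is precisely that near $\mu^{-1}(F)$ the pair (action along $\lambda$, conjugate angle) becomes polar coordinates on a disc factor $\C$, the $S^1_\lambda$-orbits collapsing over $F$. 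McDuff's argument then reduces to the standard fact that a circle in a disc is Hamiltonianly displaceable inside the disc if and only if it encloses less than half the area; the ``less than halfway along the probe'' condition is exactly that area condition, not a condition for a translation to ``stay inside $\Int(P)$'' as you assert.

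Part (ii) has roughly the right shape --- for the ``displaceable $\Rightarrow$ star Ewald'' direction one does argue that a probe displacing a point sufficiently deep in the corner at a face $f$ must emanate from a facet of $\Star(f)$ and must travel far enough that its direction is forced into $\mathcal{E}(P)$ with the stated incidence conditions, and for the converse one builds a probe from the star Ewald vector of the face toward which $u$ sits. But as written it is a plan rather than a proof: the finiteness claim for admissible probe directions, the identification of the accumulation face, the sign conventions relating $\lambda$, $-\lambda$, $\Star^*(f)$ and $\Star(f)$, and the halfway estimate from $\lambda\in P\cap(-P)$ plus reflexivity are all asserted without the computations that carry the actual content of \cite[Theorem 1.2]{McDuff-probes}.
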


Corollary \ref{cor:McDuff} follows from this.

We refer to ~\cite{AbMa, AC, LeRo, McduffSalamon}  for texts in symplectic geometry and its connection to mechanics.

\end{document}